\documentclass[a4paper]{article}
\usepackage{arxiv}

\usepackage[utf8]{inputenc}
\usepackage[T1]{fontenc}
\usepackage{lmodern}
\usepackage{microtype}

\usepackage{amsmath}
\usepackage{amssymb}
\usepackage{amsthm}

\usepackage{graphicx}
\usepackage{xcolor}
\usepackage{booktabs}
\usepackage{caption}
\usepackage{subcaption}
\usepackage{tikz}
\usetikzlibrary{positioning, arrows.meta}

\usepackage{enumitem}
\usepackage{lineno}
\usepackage{xspace}

\usepackage{url}
\usepackage{hyperref}
\usepackage{orcidlink}

\hypersetup{
	colorlinks=true,
	linkcolor=cyan,
	urlcolor=blue,
	citecolor=magenta,
	linktoc=all
}

\newtheorem{theorem}{Theorem}[section]
\newtheorem{proposition}[theorem]{Proposition}
\newtheorem{lemma}{Lemma}[section]
\newtheorem{remark}[theorem]{Remark}

\newtheoremstyle{problemstyle}{3pt}{3pt}{\normalfont}{}{\bfseries}{.}{0.5em}{}
\theoremstyle{problemstyle}
\newtheorem{tprob}{Test Problem}

\newenvironment{testproblem}[1]
{\begin{tprob}\textbf{(#1)}\ }
	{\end{tprob}}

\newcommand{\con}{\mathbf{u}}
\newcommand{\prim}{\mathbf{p}}
\newcommand{\flux}{\mathbf{f}}
\newcommand{\fluxx}{\mathbf{f}}
\newcommand{\fluxy}{\mathbf{g}}

\newcommand{\rc}{RC-EOS\xspace}
\newcommand{\id}{ID-EOS\xspace}
\newcommand{\ip}{IP-EOS\xspace}
\newcommand{\tm}{TM-EOS\xspace}

\newcommand{\wenojs}{WENO-JS\xspace}
\newcommand{\wenoz}{WENO-Z\xspace}
\newcommand{\wenoao}{WENO-AO\xspace}
\newcommand{\wenoaoi}{WENO-AOI\xspace}

\title{Constraint Preserving AFD-WENO Schemes for Relativistic Hydrodynamics with General Equations of State}
\author{
	\textbf{Pramodit Mishra}\,\orcidlink{0009-0001-9053-0667}\qquad
	\textbf{Shubham Upadhyay}\,\orcidlink{0009-0000-7846-003X}\\[0.3em]
	\textbf{Rakesh Kumar}\,\orcidlink{0000-0002-5829-0384}\qquad
	\textbf{Biswarup Biswas}\,\orcidlink{0000-0001-7771-6400}\thanks{Corresponding author at: Department of Mathematics, École Centrale School of Engineering, Mahindra University, Hyderabad, 500043, Telangana, India.\newline \textit{E-mail addresses:} \href{mailto:pramoditmishra21@gmail.com}{\texttt{pramoditmishra21@gmail.com}} (P.~Mishra), \href{mailto:shubham.u266@gmail.com}{\texttt{shubham.u266@gmail.com}} (S.~Upadhyay), \href{mailto:rakesh.kumar@mahindrauniversity.edu.in}{\texttt{rakesh.kumar@mahindrauniversity.edu.in}} (R.~Kumar), \href{mailto:biswarupb7@gmail.com}{\texttt{biswarupb7@gmail.com}} (B.~Biswas).}\\[0.8em]
	Department of Mathematics, École Centrale School of Engineering\\
	Mahindra University, Hyderabad 500043, Telangana, India
}
\date{}
\begin{document}
	\maketitle
	
	\begin{abstract}
		We develop a high-order physical-constraint-preserving (PCP) alternative finite difference weighted essentially non-oscillatory (AFD-WENO) scheme for the special relativistic hydrodynamics equations with general equations of state. The proposed scheme comprises two key limiters: a state limiter, which acts after the WENO state interpolation step, and a flux limiter, which acts on the final high-order fluxes. The state limiter ensures that the interpolated states are physically admissible, while the flux limiter ensures that the numerical fluxes are physically admissible. The resulting scheme is rigorously proved to satisfy the physical constraints. Incorporating multiple WENO interpolation techniques, including an improved adaptive-order formulation (\wenoaoi), the method is validated through extensive one- and two-dimensional numerical benchmarks with various equations of state. The numerical results demonstrate high-order accuracy, sharp resolution of discontinuities, and robust stability in extreme relativistic regimes.
	\end{abstract}
	
	\section{Introduction}
	\label{sec:introduction}
	Relativistic hydrodynamics (RHD) provides the fundamental framework for modeling fluid flows in which the fluid velocity approaches the speed of light or the internal energy becomes comparable to the rest-mass energy. Such flows arise naturally in numerous high-energy astrophysical phenomena, including relativistic jets from active galactic nuclei and gamma-ray bursts, pulsar wind nebulae, core-collapse supernovae, and neutron star mergers~\cite{begelman1984theory,bottcher2012relativistic,mirabel1999sources,zensus1997parsec, piran2004physics,gaensler2006evolution, janka2012explosion,baiotti2017binary}.
	The governing equations form a system of nonlinear hyperbolic conservation laws whose solutions contain strong shocks, contact discontinuities, rarefaction waves, and complex multidimensional wave interactions. In a multi-dimensional Cartesian framework, these equations can be compactly expressed as
	\begin{equation*}
		\frac{\partial \con}{\partial t} + \sum_{d} \frac{\partial \flux_d(\con)}{\partial x_d} = \mathbf{0},
	\end{equation*}
	where $\con$ denotes the vector of conserved variables, and $\flux_d(\con)$ represents the corresponding flux vector in the $x_d$-direction. Analytical solutions to these equations are generally unavailable due to their highly nonlinear nature, most notably dictated by the implicit presence of the Lorentz factor and the intricate coupling between the conserved and primitive variables. Consequently, robust numerical methods are indispensable for studying the evolution of relativistic flows.
	
	\par The development of numerical methods for the RHD equations has a long trajectory, beginning with the pioneering work of Wilson in 1972 \cite{wilson1972numerical}, who introduced an explicit finite-difference approach relying on artificial viscosity to capture shocks. While foundational, such artificial viscosity techniques suffer from severe inaccuracies and excessive numerical smearing when dealing with highly relativistic flows characterized by large Lorentz factors \cite{centrella1984planar}. As a result, Eulerian RHD simulations developed slowly until the 1990s. Modern high-resolution shock-capturing Godunov-type methods then led to significant progress. This era successfully introduced approximate and exact Riemann solvers to the relativistic regime \cite{marti1991numerical,marti1994analytical,dai1997iterative,ibanez1999riemann}. 
	
	Following these exact Riemann solvers, researchers progressively developed higher-order spatial reconstructions to improve accuracy in smooth regions. Notable advancements include extensions of the piecewise parabolic method (PPM) \cite{marti1996extension,aloy1999genesis,mignone2005piecewise}, as well as essentially non-oscillatory (ENO) and weighted ENO (WENO) schemes \cite{dolezal1995relativistic,weno2007tchekhovskoy}. An extensive review of these classical formulations and their comparative performance across various relativistic test problems is provided by Font \cite{font2008numerical}.
	
	A physically meaningful solution of the RHD equations must satisfy three fundamental \emph{constraints}: the positivity of rest-mass density, the positivity of pressure, and the subluminal velocity condition. Violations of these constraints can lead to unphysical solutions, numerical instabilities, and consequently, the breakdown of the simulation. Early high-order schemes often failed to maintain these constraints, particularly in the presence of strong shocks or low-density regions. To address this issue, researchers developed various physical-constraint-preserving (PCP) techniques that combine suitable numerical fluxes with convex scaling limiters to ensure that the numerical solution remains within the physically admissible state set under a valid CFL condition \cite{wu2015high,qin2016bound,zhao2017central}.
	
	The equation of state (EOS) provides the essential thermodynamic closure relating pressure, rest-mass density, and specific internal energy, and its choice critically determines both the qualitative and quantitative character of the solution. While the ideal-gas EOS has been widely adopted for its algebraic simplicity, realistic astrophysical applications involving relativistic temperatures, variable composition, or dense nuclear matter require more sophisticated closures. To approximate the thermodynamically consistent but computationally expensive Synge EOS \cite{synge1958relativistic}, several effective EOS models have been proposed that retain computational tractability. Most notable among these are the Taub--Mathews EOS \cite{mathews1971hydromagnetic} and the Ryu--Chattopadhyay EOS \cite{ryu2006equation}. The development of robust numerical schemes applicable to a general class of EOS has consequently become an important requirement for high-fidelity, astrophysically relevant RHD simulations \cite{falle1996upwind,wu2017physical,xu2024high}.
	
	Among high-order methods for hyperbolic conservation laws, WENO schemes occupy a central place \cite{liu1994weighted}. Following the classical fifth-order formulation of Jiang and Shu \cite{jiang1996efficient}, the framework has been extensively extended to improve accuracy and robustness near discontinuities \cite{henrick2005mapped, borges2008improved, castro2011high}. Of particular relevance here are Alternative Finite Difference WENO (AFD-WENO) schemes \cite{jiang2013alternative, balsara2025efficient}. The finite-volume WENO framework for multidimensional hyperbolic systems requires quadrature-based flux integration, which can be computationally expensive because it involves additional evaluations of the flux function at quadrature nodes beyond the cell interfaces. The AFD-WENO framework, similar to the classical finite-volume WENO framework, employs a Riemann solver; however, it avoids these additional flux evaluations by incorporating a high-order correction term to achieve the desired order of accuracy \cite{balsara2025efficient}.
	
	Despite the clear advantages of the AFD-WENO approach, maintaining physical admissibility across complex thermodynamic closures introduces significant mathematical challenges. While physical-constraint-preserving frameworks have been successfully developed for classical WENO and discontinuous Galerkin schemes under a general EOS \cite{wu2017physical, xu2024high}, and PCP extensions of the AFD-WENO framework have recently been introduced for the ideal-gas EOS \cite{balsara2025efficient}, the intersection of these advancements remains unexplored. This highlights an opportunity to extend the AFD-WENO framework to accommodate the thermodynamic requirements of realistic astrophysical simulations by examining the direct applicability of recently developed PCP methods to a general EOS.

	\par In this work, we develop a physical-constraint-preserving framework for the AFD-WENO scheme for relativistic hydrodynamics with a general EOS. The framework utilizes both flux and state limiting. In addition, we introduce efficient state-variable WENO interpolation strategies within the AFD-WENO framework to improve accuracy.
	
	The remainder of this paper is organized as follows. In Section~\ref{sec:preliminaries}, we introduce the governing equations of relativistic hydrodynamics with a general EOS, the admissible state set, and its key mathematical properties. Section~\ref{sec:afd_weno} presents the AFD-WENO framework, including the alternative finite-difference formulation and the WENO interpolation strategies. Section~\ref{sec:extension_2D} extends the scheme to two spatial dimensions. Section~\ref{sec:pcp} develops the PCP limiters and establishes a rigorous proof of constraint preservation. Numerical experiments for one- and two-dimensional benchmark problems are reported in Section~\ref{sec:numerics} to demonstrate the accuracy, robustness, and constraint-preserving properties of the proposed scheme. Finally, conclusions are drawn in Section~\ref{sec:conclusions}.

	\section{Preliminaries}
	\label{sec:preliminaries}
	In this section, we introduce the governing equations of RHD, the equations of state considered in this work, the conservative-to-primitive variable conversion procedure, and the physical constraints that must be preserved by numerical schemes.
	
	\subsection{Governing Equations}
	\label{subsec:governing_equations}
	We consider the two-dimensional RHD equations in the laboratory frame considering the speed of light $c = 1$, which can be written in conservative form as
	\begin{align}\label{SRHD}
		\frac{\partial \con}{\partial t} + \frac{\partial \fluxx(\con)}{\partial x} + \frac{\partial \fluxy(\con)}{\partial y} = 0,
	\end{align}
	where $\con$ is the vector of conserved variables, and $\fluxx(\con)$ and $\fluxy(\con)$ are the flux vectors in the $x$ and $y$ directions, respectively, given by
	\begin{align*}\con = \begin{pmatrix}D \\ m_x \\ m_y \\ E \end{pmatrix}, \quad
		\fluxx(\con) = \begin{pmatrix}D v_x \\ m_x v_x + p \\ m_y v_x \\ m_x \end{pmatrix}, \quad
		\fluxy(\con) = \begin{pmatrix}D v_y \\ m_x v_y \\ m_y v_y + p \\ m_y \end{pmatrix}.
	\end{align*}
	Here, $D$ is the conserved mass density, $m_x$ and $m_y$ are the momentum densities in the $x$ and $y$ directions, $E$ is the total energy density, $p$ is the pressure, and $v_x$ and $v_y$ are the velocity components in the $x$ and $y$ directions, respectively. 
	
	The conserved variables $\con = (D, m_x, m_y, E)^T$ are related to the primitive variables $\prim = (\rho, v_x, v_y, p)^T$, where $\rho$ is the rest-mass density, through the following relations:
	\begin{align*}D = \rho W, \quad m_x = \rho h W^2 v_x, \quad m_y = \rho h W^2 v_y, \quad E = \rho h W^2 - p,
	\end{align*}
	where 
	\begin{align*}W = \frac{1}{\sqrt{1 - v^2}}
	\end{align*}
	is the Lorentz factor with $v^2 = v_x^2 + v_y^2 < 1$. To close the system \eqref{SRHD}, an EOS is required.
	
	\subsection{Equation of State}
	\label{subsec:eos}
	We consider a general EOS of the form
	\begin{align*}h(\rho, p) = 1 + \epsilon(\rho, p) + \frac{p}{\rho}
	\end{align*}
	This formulation simplifies the conservative-to-primitive variable conversion and is commonly adopted in the literature \cite{wu2017physical,basak2025constraints}. For the hyperbolicity of the system \eqref{SRHD}, the sound speed should satisfy $0 < c_s < 1$ where $c_s$ is defined as \begin{align*}c_s^2 = -\frac{\rho}{nh}\frac{\partial h}{\partial \rho},\quad n = \rho\frac{\partial h}{\partial p}-1,
	\end{align*}
	with the specific enthalpy $h$ satisfying the following inequality~\cite{wu2017physical,xu2024high}
	\begin{align*}h(p, \rho) \geq \sqrt{1 + \frac{p^2}{\rho^2}} + \frac{p}{\rho},
	\end{align*}
	as required by relativistic kinetic theory \cite{basak2025constraints}.
	The most commonly used model is the \emph{ideal gas EOS} (\id)
	\begin{align*}h = 1 + \frac{\Gamma}{\Gamma - 1}\frac{p}{\rho},
	\end{align*}
	where $\Gamma \in (1, 2]$ is the adiabatic index. However, as noted in \cite{basak2025constraints}, this EOS is derived from non-relativistic thermodynamics and provides a poor approximation for many relativistic flows, particularly for semi-relativistic or two-component fluids. 
	
	To overcome this limitation, quite a few equations of state are used in the literature that provide better approximations in the relativistic regime. In addition to the \id, we consider the following three equations of state in this work
	\begin{itemize}
		\item \emph{Taub--Matthews} (\tm) \cite{mathews1971hydromagnetic,mignone2005piecewise}
		\begin{align*}h = \frac{5p}{2\rho} + \sqrt{\frac{9p^2}{4\rho^2} + 1}.
		\end{align*}
		
		\item \emph{Ideal polytropic} (\ip) \cite{sokolov2001simple}
		\begin{align*}h = \frac{2p}{\rho} + \sqrt{\frac{4p^2}{\rho^2} + 1}.
		\end{align*}
		
		\item \emph{Ryu--Chattopadhyay} (\rc) \cite{ryu2006equation}
		\begin{align*}h = \frac{2(6p^2 + 4p\rho + \rho^2)}{\rho(3p + 2\rho)}.
		\end{align*}
	\end{itemize}
	
	\begin{remark}
		The inverse transformation from conserved variables $\con$ to primitive variables $\prim$ is nonlinear and typically requires iterative methods for numerical solution, which are well-established in the literature \cite{cai2024provably,basak2025constraints}. We use the method proposed in \cite{cai2024provably} for the conservative-to-primitive variable conversion in this work, which is provably robust and efficient for a wide range of equations of state. For \rc, we use the method proposed in \cite{basak2025constraints} which is specifically designed for this EOS and is also provably robust and efficient.
	\end{remark}
	
	\subsection{Hyperbolicity and Physical Constraints}
	\label{subsec:hyperbolicity_constraints}
	The system \eqref{SRHD} is equipped with a complete set of eigenvalues and eigenvectors of the Jacobian matrices $\partial \fluxx/\partial \con$ and $\partial \fluxy/\partial \con$, confirming that it is a hyperbolic system of conservation laws. Physically admissible solutions must satisfy the following constraints
	\begin{align*}\rho > 0, \quad p > 0, \quad |\mathbf{v}| < 1,
	\end{align*}
	where $\mathbf{v}=(v_x,v_y)^T$ is the fluid velocity. The corresponding admissible set of conservative states is denoted by
	\begin{equation}
		\mathcal{G}_\prim\label{eq:admissible_set_prim}
		=
		\left\{
		\con
		:
		\rho(\con)>0,\;
		p(\con)>0,\;
		|\mathbf{v}(\con)|<1
		\right\}.
	\end{equation}
	
	These constraints ensure positive density and pressure, as well as subluminal fluid velocities. Preserving these physical constraints at the discrete level is crucial for the robustness and stability of numerical schemes for RHD. Violations of these constraints can lead to unphysical solutions, numerical instabilities, or failure of the conservative-to-primitive variable conversion procedure. 
	
	\section{AFD-WENO schemes}\label{sec:afd_weno}
	We start by considering the following one-dimensional system
	\begin{align}\label{SRHD_1D}
		\frac{\partial \con}{\partial t} + \frac{\partial \fluxx(\con)}{\partial x} = \mathbf{0},
	\end{align}
	and system \eqref{SRHD} can be solved by applying the one-dimensional scheme in each direction using the commonly adopted tensor product approach \cite{jiang2013alternative,wu2017physical}.
	
	The computational domain is discretized by intervals $I_j = [x_{j-1/2}, x_{j+1/2}]$ with a uniform grid size $\Delta x$. Using the discretization, we can write the semi-discrete finite difference scheme for \eqref{SRHD_1D} as
	\begin{equation}\label{eq:1d_scheme}
		\frac{d}{dt} \con_j + \frac{1}{\Delta x} \left( \hat{\mathbf{f}}_{j+1/2} - \hat{\mathbf{f}}_{j-1/2} \right) = \mathbf{0},
	\end{equation}
	where $\con_j$ approximates $\con(t, x_j)$ and $\hat{\mathbf{f}}_{j+1/2}$ is the numerical flux at the cell interface $x_{j+1/2}$ and $k$-th order accurate in the sense that
	\begin{align*}
		\frac{1}{\Delta x} \left( \hat{\mathbf{f}}_{j+1/2} - \hat{\mathbf{f}}_{j-1/2} \right) = \mathbf{f}(\con(x))_x|_{x_j} + \mathcal{O}(\Delta x^k).
	\end{align*}
	A numerical flux $\hat{\mathbf{f}}_{j+1/2}$ is consistent with the physical flux $\flux$ in the sense that $\hat{\mathbf{f}}(\con, \con, \dots, \con) = \flux(\con)$ for any $\con$. A classical finite difference WENO scheme is a high-order scheme that computes the numerical flux $\hat{\mathbf{f}}_{j+1/2}$ using a convex combination of lower-order fluxes computed on different stencils. The weights in this convex combination are designed to adaptively select the smoothest stencil, thereby achieving high-order accuracy in smooth regions while avoiding spurious oscillations near discontinuities.
	
	\subsection{Alternative WENO Formulations}
	
	The AFD-WENO scheme differs from the classical finite difference WENO formulation in that it reconstructs the point values of the conserved variables and evaluates the numerical flux as the sum of a low-order Riemann flux and a high-order correction term. Accordingly, the numerical flux at the interface $x_{j+\frac12}$ is expressed as
	\begin{equation*}
		\hat{\mathbf{f}}_{j+\frac12}
		=
		\hat{\mathbf{f}}^{\,\mathrm{low}}_{j+\frac12}
		+
		\hat{\mathbf{f}}^{\,\mathrm{cor}}_{j+\frac12},
	\end{equation*}
	where
	\begin{equation*}
		\hat{\mathbf{f}}^{\,\mathrm{low}}_{j+\frac12}
		=
		\mathcal{F}
		\left(
		\con^-_{j+\frac12},
		\con^+_{j+\frac12}
		\right)
	\end{equation*}
	is obtained using an approximate Riemann solver, while the correction term
	$\hat{\mathbf{f}}^{\,\mathrm{cor}}_{j+\frac12}$ is constructed from the physical flux values
	$\mathbf{f}_j=\mathbf{f}(\con_j)$.
	
	The interface states are reconstructed by applying the WENO interpolation component-wise to the conserved variables,
	\begin{align*}
		\con^-_{j+\frac12}
		&=
		\mathrm{WENO}
		\left(
		\con_{j-r+1},
		\ldots,
		\con_{j+r-1}
		\right),\\
		\con^+_{j+\frac12}
		&=
		\mathrm{WENO}
		\left(
		\con_{j+r},
		\ldots,
		\con_{j-r+2}
		\right).
	\end{align*}
	
	For a $(2r-1)$th-order scheme, the correction term ensures the desired order of accuracy. The correction coefficients for different orders are given in \cite{jiang2013alternative,balsara2025efficient}. In this work, we employ the fifth-order ($r=3$) correction
	\begin{equation*}
		\hat{\mathbf{f}}^{\,\mathrm{cor},3}_{j+\frac12}
		=
		\frac{19}{3840}
		\left(
		\mathbf{f}_{j-2}
		+
		\mathbf{f}_{j+3}
		\right)
		-
		\frac{137}{3840}
		\left(
		\mathbf{f}_{j-1}
		+
		\mathbf{f}_{j+2}
		\right)
		+
		\frac{59}{1920}
		\left(
		\mathbf{f}_{j}
		+
		\mathbf{f}_{j+1}
		\right).
	\end{equation*}
	
	When the WENO interpolation is applied directly to the conserved variables, the resulting method is  computationally efficient, however, may produce spurious oscillations near strong discontinuities. To improve robustness, the reconstruction can instead be performed in the local characteristic space.
	
	In the local characteristic decomposition (LCD) approach, the characteristic basis is constructed independently at each interface.
	
	The interface state is first approximated by an appropriate average $\con_{j+\frac12}$, and the corresponding flux Jacobian is diagonalized as
	\begin{align*}
		\mathbf{A}_{j+\frac12}
		=
		\mathbf{A}(\con_{j+\frac12})
		=
		\mathbf{R}_{j+\frac12}
		\mathbf{\Lambda}_{j+\frac12}
		\mathbf{L}_{j+\frac12}
	\end{align*}
	where $\mathbf{R}_{j+\frac12}$ and $\mathbf{L}_{j+\frac12}$ are the matrices of right and left eigenvectors, respectively.
	
	The conserved variables on the reconstruction stencil are projected onto the local characteristic variables,
	\begin{align*}
		\mathbf{w}_k
		=
		\mathbf{L}_{j+\frac12}\con_k,
		\qquad
		k=j-r+1,\ldots,j+r,
	\end{align*}
	and the WENO interpolation is then applied component-wise in the characteristic space,
	\begin{align*}
		\mathbf{w}^-_{j+\frac12}
		&=
		\mathrm{WENO}
		\left(
		\mathbf{w}_{j-r+1},
		\ldots,
		\mathbf{w}_{j+r-1}
		\right),\\
		\mathbf{w}^+_{j+\frac12}
		&=
		\mathrm{WENO}
		\left(
		\mathbf{w}_{j+r},
		\ldots,
		\mathbf{w}_{j-r+2}
		\right).
	\end{align*}
	Finally, the reconstructed interface values are transformed back to the physical space according to
	\begin{align*}
		\con^\pm_{j+\frac12}
		=
		\mathbf{R}_{j+\frac12}
		\mathbf{w}^\pm_{j+\frac12}.
	\end{align*}
	\subsection{Choice of WENO interpolation}
	In the previous subsection, we introduced the AFD-WENO framework for solving systems of hyperbolic conservation laws. The AFD-WENO framework requires the numerical flux at the cell interface, which is computed using reconstructed solution values obtained through WENO interpolation. Over the years, several WENO interpolation techniques have been proposed in the literature to improve the accuracy, robustness, and resolution of the original WENO formulation. In this section, we briefly review three widely used interpolation procedures: the classical \wenojs interpolation \cite{liu1994weighted}, the \wenoz interpolation \cite{borges2008improved}, and the WENO-AO(5,3) \cite{balsara2025efficient} interpolation. Furthermore, we propose an improved WENO-AOI(5,3) interpolation, which enhances the performance of the WENO-AO(5,3) framework while preserving its high-order accuracy and non-oscillatory properties.

	For polynomial interpolation, we employ the Legendre polynomial basis on the cell $I_i = [x_{i-\frac{1}{2}}, x_{i+\frac{1}{2}}]$ because of its orthogonality property (see \cite{balsara2025efficient} for more details). The Legendre polynomials up to degree four over the cell $I_i$ are defined by
	\begin{align*}
		\mathcal{L}_0(x) &=1,~~
		\mathcal{L}_1(x) = \left(\frac{x-x_i}{\Delta x}\right),\\
		\mathcal{L}_2(x) &= \left(\frac{x-x_i}{\Delta x}\right)^2-\frac{1}{12},\\
		\mathcal{L}_3(x) &= \left(\frac{x-x_i}{\Delta x}\right)^3-\frac{3}{20}\left(\frac{x-x_i}{\Delta x}\right),\\
		\mathcal{L}_4(x) &= \left(\frac{x-x_i}{\Delta x}\right)^4
		-\frac{3}{14}\left(\frac{x-x_i}{\Delta x}\right)^2
		+\frac{3}{560}.
	\end{align*}
	Consider the five-point stencil
	\begin{align*}
		\mathcal{S}_0^5=\{i-2,i-1,i,i+1,i+2\},
	\end{align*}
	and its associated three-point substencils
	\begin{align*}
		\mathcal{S}_0^3=\{i-2,i-1,i\},\qquad
		\mathcal{S}_1^3=\{i-1,i,i+1\},\qquad
		\mathcal{S}_2^3=\{i,i+1,i+2\}.
	\end{align*}
	Let $\mathbb{P}_0^5(x)$ denote the fifth-order reconstruction polynomial over the stencil $\mathcal{S}_0^5$, and let
	$\mathbb{P}_0^3(x)$,
	$\mathbb{P}_1^3(x)$,
	and
	$\mathbb{P}_2^3(x)$
	denote the third-order reconstruction polynomials on the sub-stencil
	$\mathcal{S}_0^3$,
	$\mathcal{S}_1^3$,
	and
	$\mathcal{S}_2^3$,
	respectively. These interpolation polynomials are expressed in terms of the Legendre basis as
	\begin{equation*}
		\left.
		\begin{aligned}
			\mathbb{P}_0^5(x)
			&=
			a_{00}^{5}\mathcal{L}_0(x)
			+a_{01}^{5}\mathcal{L}_1(x)
			+a_{02}^{5}\mathcal{L}_2(x)
			+a_{03}^{5}\mathcal{L}_3(x)
			+a_{04}^{5}\mathcal{L}_4(x),\\[1ex]
			\mathbb{P}_0^3(x)
			&=
			a_{00}^{3}\mathcal{L}_0(x)
			+a_{01}^{3}\mathcal{L}_1(x)
			+a_{02}^{3}\mathcal{L}_2(x),\\
			\mathbb{P}_1^3(x)
			&=
			a_{10}^{3}\mathcal{L}_0(x)
			+a_{11}^{3}\mathcal{L}_1(x)
			+a_{12}^{3}\mathcal{L}_2(x),\\
			\mathbb{P}_2^3(x)
			&=
			a_{20}^{3}\mathcal{L}_0(x)
			+a_{21}^{3}\mathcal{L}_1(x)
			+a_{22}^{3}\mathcal{L}_2(x),
		\end{aligned}
		\right\}
	\end{equation*}
	where the coefficients $a_{jk}^{r}$ are obtained by enforcing the interpolation conditions on polynomials. In order to measure the smoothness of function over a given stencil, we need a smoothness indicator. The smoothness indicators associated with the stencil $\mathcal{S}_k^m$ and polynomial interpolation $\mathbb{P}_k^m$ are defined as (see \cite{jiang1996efficient} for more details).
	\begin{equation*}
		\beta_k^m
		=
		\sum_{\ell=1}^{m-1}
		\Delta x^{2\ell-1}
		\int_{I_i}
		\left(
		\frac{d^\ell}{dx^\ell}\mathbb{P}_k^m(x)
		\right)^2\,dx,
	\end{equation*}
	We now present the construction of various WENO interpolation schemes based on Legendre polynomial expansions and the associated smoothness indicators.
	\subsection{WENO-JS5}\label{sec:wenojs}
	The WENO-JS5 interpolation at the cell interface $x_{i+\frac12}$ is defined as
	\begin{equation*}
		u_{i+\frac12}
		=
		\sum_{k=0}^{2}\omega_k^3\,\mathbb{P}_k^3(x_{i+\frac12}),
	\end{equation*}
	where $\mathbb{P}_k^3$, $k=0,1,2$, denote the quadratic interpolation
	polynomials constructed on the sub-stencils
	$\mathcal{S}_0^3$, $\mathcal{S}_1^3$, and $\mathcal{S}_2^3$, respectively. The nonlinear weights, denoted by $\omega_k^m$, are defined as
	\begin{equation*}
		\omega_k^3
		=
		\frac{\alpha_k^3}
		{\displaystyle\sum_{j=0}^{2}\alpha_j^3},
		\qquad k=0,1,2,
	\end{equation*}
	where
	\begin{equation*}
		\alpha_k^3
		=
		\frac{\gamma_k^3}
		{\left(\beta_k^3+\varepsilon\right)^p},
		\qquad k=0,1,2.
	\end{equation*}
	Here, $\varepsilon$ is a small positive number introduced to avoid division by
	zero, and the parameter $p=2$. The optimal linear weights, denoted by $\gamma_k^m$, are given by
	\begin{equation*}
		\gamma_0^3=\frac{1}{10},
		\qquad
		\gamma_1^3=\frac{6}{10},
		\qquad
		\gamma_2^3=\frac{3}{10},
	\end{equation*}
	which satisfy
	\begin{align*}
		\gamma_0^3+\gamma_1^3+\gamma_2^3=1.
	\end{align*}
	\subsection{WENO-Z5}\label{sec:wenoz}
	The WENO-Z5 interpolation at the cell interface $x_{i+\frac12}$ is defined as
	\begin{equation*}
		u_{i+\frac12}
		=
		\sum_{k=0}^{2}\omega_k^3\,\mathbb{P}_k^3(x_{i+\frac12}),
	\end{equation*}
	where $\mathbb{P}_k^3$, $k=0,1,2$, denote the quadratic interpolation
	polynomials constructed on the substencils
	$\mathcal{S}_0^3$, $\mathcal{S}_1^3$, and $\mathcal{S}_2^3$, respectively. The nonlinear weights are defined by
	\begin{equation*}
		\omega_k^3
		=
		\frac{\alpha_k^3}
		{\displaystyle\sum_{j=0}^{2}\alpha_j^3},
		\qquad k=0,1,2,
	\end{equation*}
	where
	\begin{equation*}
		\alpha_k^3
		=
		\gamma_k^3
		\left(
		1+
		\left(
		\frac{\tau}{\beta_k^3+\varepsilon}
		\right)^p
		\right),
		\qquad k=0,1,2.
	\end{equation*}
	Here, $\varepsilon$ is a small positive number introduced to avoid division by
	zero, and the parameter $p=2$. The optimal linear weights are same as in WENO-JS5 schemes  and $\tau$ is the global
	smoothness indicator. The global smoothness indicator is defined as
	\begin{equation*}
		\tau
		=
		|\beta_0^3-\beta_2^3|.
	\end{equation*}
	\subsection{WENO-AO(5,3)}\label{sec:wenoao}
	The WENO-AO reconstruction at the cell interface $x_{i+\frac12}$ is defined as (see \cite{balsara2025efficient} for more details)
	\begin{equation*}
		u_{i+\frac12}
		=
		\frac{\omega_0^5}{\gamma_0^5}
		\left(
		\mathbb{P}_0^5(x_{i+\frac12})
		-
		\sum_{k=0}^{2}\gamma_k^3\mathbb{P}_k^3(x_{i+\frac12})
		\right)
		+
		\sum_{k=0}^{2}\omega_k^3\mathbb{P}_k^3(x_{i+\frac12}),
	\end{equation*}
	where $\mathbb{P}_0^5$ denotes the fourth-degree polynomial constructed on the stencil
	$\mathcal{S}_0^5$, while
	$\mathbb{P}_k^3$, $k=0,1,2$, are the quadratic polynomials constructed on the sub-stencils
	$\mathcal{S}_k^3$, respectively. The linear weights satisfy
	\begin{equation*}
		\gamma_0^5+\sum_{k=0}^{2}\gamma_k^3=1,
		\qquad
		\gamma_0^5,\;\gamma_k^3>0.
	\end{equation*}
	The nonlinear weights are defined by
	\begin{equation*}
		\omega_0^5=\frac{\alpha_0^5}
		{\alpha_0^5+\displaystyle\sum_{k=0}^{2}\alpha_k^3},
		\qquad
		\omega_k^3=\frac{\alpha_k^3}
		{\alpha_0^5+\displaystyle\sum_{j=0}^{2}\alpha_j^3},
		\quad k=0,1,2,
	\end{equation*}
	where
	\begin{equation*}
		\alpha_0^5
		=
		\gamma_0^5
		\left(
		1+\left(\frac{\tau}{\beta_0^5+\varepsilon}\right)^2
		\right),
		\qquad
		\alpha_k^3
		=
		\gamma_k^3
		\left(
		1+\left(\frac{\tau}{\beta_k^3+\varepsilon}\right)^2
		\right),
		\quad k=0,1,2.
	\end{equation*}
	Here, $\varepsilon$ is a small positive number introduced to avoid division by zero,
	$\beta_0^5$ and $\beta_k^3$ denote the smoothness indicators corresponding to
	$\mathbb{P}_0^5$ and $\mathbb{P}_k^3$, respectively, and $\tau$ is the global
	smoothness indicator. The global smoothness indicator is defined as
	\begin{equation*}
		\tau
		=
		\frac{1}{3}
		\left(
		\left|\beta_0^5-\beta_0^3\right|
		+
		\left|\beta_0^5-\beta_1^3\right|
		+
		\left|\beta_0^5-\beta_2^3\right|
		\right).
	\end{equation*}
	\subsection{WENO-AOI}\label{sec:wenoaoi}
	The \wenoaoi interpolation has the same reconstruction structure as the \wenoao interpolation. Thus, the reconstructed value at the cell interface $x_{i+\frac12}$ is given by
	\begin{equation*}
		u_{i+\frac12}
		=
		\frac{\omega_0^5}{\gamma_0^5}
		\left(
		\mathbb{P}_0^5(x_{i+\frac12})
		-
		\sum_{k=0}^{2}\gamma_k^3\mathbb{P}_k^3(x_{i+\frac12})
		\right)
		+
		\sum_{k=0}^{2}\omega_k^3\mathbb{P}_k^3(x_{i+\frac12}),
	\end{equation*}
	where the nonlinear weights are computed in the same manner as in the \wenoao scheme, except that a different global smoothness indicator is employed. Specifically, the global smoothness indicator for the \wenoaoi scheme is defined by
	\begin{equation*}
		\tau
		=
		\left|
		\beta_0^5
		-
		\frac{1}{3}
		\left(
		\beta_0^3+\beta_1^3+\beta_2^3
		\right)
		\right|.
	\end{equation*}
	\section{Extension to Two Dimensions}\label{sec:extension_2D}
	
	Consider the two-dimensional computational domain
	$\Omega=[x_{\min},x_{\max}]\times[y_{\min},y_{\max}]$,
	which is partitioned into a uniform Cartesian mesh consisting of rectangular cells
	\begin{equation*}
		I_{i,j}
		=
		[x_{i-\frac12},x_{i+\frac12}]
		\times
		[y_{j-\frac12},y_{j+\frac12}],
	\end{equation*}
	with mesh sizes $\Delta x$ and $\Delta y$ in the $x$- and $y$-directions, respectively. Let $\con_{i,j}$ denote the approximation to $\con(t,x_i,y_j)$ at the grid point $(x_i,y_j)$.
	
	The extension of the AFD-WENO scheme to two dimensions is performed in a dimension-by-dimension manner by applying the one-dimensional reconstruction and flux evaluation independently in each coordinate direction. The resulting semi-discrete finite difference scheme for the two-dimensional system \eqref{SRHD} is
	\begin{equation}\label{semi_discrete_scheme_2D}
		\frac{d}{dt}\con_{i,j}
		+\frac{1}{\Delta x}
		\left(
		\hat{\mathbf{f}}_{i+\frac12,j}
		-
		\hat{\mathbf{f}}_{i-\frac12,j}
		\right)
		+\frac{1}{\Delta y}
		\left(
		\hat{\mathbf{g}}_{i,j+\frac12}
		-
		\hat{\mathbf{g}}_{i,j-\frac12}
		\right)
		=\mathbf{0}.
	\end{equation}
	Here, $\hat{\mathbf{f}}_{i+\frac12,j}$ and $\hat{\mathbf{g}}_{i,j+\frac12}$ denote the numerical fluxes in the $x$- and $y$-directions, respectively, obtained by applying the one-dimensional AFD-WENO reconstruction described in the previous section. The corresponding reconstructed left and right interface states are denoted by $\con^{\pm}_{i+\frac12,j}$ and $\con^{\pm}_{i,j+\frac12}$ in the $x$- and $y$-directions, respectively. This semi-discrete formulation forms the basis for the physical-constraint-preserving analysis presented in the next section.
	
	\section{Physical Constraint Preservation by AFD-WENO Schemes}
	\label{sec:pcp}
	
	This section develops the PCP framework for the AFD-WENO scheme. We begin by reformulating the admissible set $\mathcal{G}_{\prim}$ in terms of conservative variables and establishing its convexity. We then specify the wave-speed estimates and CFL condition. The complete algorithmic procedure, including the state-limiting and two-stage flux-limiting procedures, is presented next, followed by the supporting lemmas and the main constraint-preservation theorem. The section closes with a remark on the necessity of the CFL bound, which is made transparent by the proof of the theorem.
	
	\subsection{Admissible Set in Conservative Variables}
	\label{subsec:admissible_set}
	
	The physically admissible set $\mathcal{G}_{\prim}$ was defined in~\eqref{eq:admissible_set_prim} in terms of the primitive variables. For the development of PCP schemes it is essential to characterise admissibility directly in terms of the conservative variables. To this end we define the \emph{$q$-function}
	\begin{equation}
		\label{eq:q_function}
		q(\con) := E - \sqrt{D^2 + m_x^2 + m_y^2},
	\end{equation}
	and the conservative admissible set
	\begin{equation}
		\label{eq:admissible_set}
		\mathcal{G}
		=
		\Bigl\{
		\con = (D, m_x, m_y, E)^T \in \mathbb{R}^4
		:
		D > 0
		\quad\text{and}\quad
		q(\con) > 0
		\Bigr\}.
	\end{equation}
	The condition $D > 0$ ensures positivity of the conserved mass density, and together with $q(\con) > 0$ it encodes pressure positivity and the subluminal velocity constraint, as confirmed by the following proposition.
	
	\begin{proposition}[Equivalence of admissible sets \cite{wu2015high,wu2017physical}]
		\label{prop:equivalence}
		The admissible set $\mathcal{G}$ defined in~\eqref{eq:admissible_set} is equivalent to the physically admissible set $\mathcal{G}_{\prim}$ defined in~\eqref{eq:admissible_set_prim}.
	\end{proposition}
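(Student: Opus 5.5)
We prove the two inclusions $\mathcal{G}_{\prim}\subseteq\mathcal{G}$ and $\mathcal{G}\subseteq\mathcal{G}_{\prim}$ separately, using only the primitive--conservative relations of Section~\ref{subsec:governing_equations} and the kinetic-theory inequality $h\ge \sqrt{1+p^2/\rho^2}+p/\rho$ from Section~\ref{subsec:eos}. Throughout we write $m^2=m_x^2+m_y^2$ and $v=|\mathbf{v}|$.

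\emph{Inclusion $\mathcal{G}_{\prim}\subseteq\mathcal{G}$.} Let $\rho>0$, $p>0$ and $v<1$. Then $W\ge 1$ and $D=\rho W>0$. For the $q$-function we use $m=\rho hW^2 v$ and $D^2=\rho^2W^2=\rho^2W^4(1-v^2)$. This gives
\begin{equation*}
q=\rho hW^2-p-\rho W^2\sqrt{h^2v^2+1-v^2},
\end{equation*}
so $q>0$ is equivalent to $h-p/(\rho W^2)>\sqrt{h^2v^2+1-v^2}$. We square this, noting that the left side is positive because $h>p/\rho$ by the enthalpy bound. Using $1/W^2=1-v^2$ and dividing by $1-v^2$, the inequality reduces to
\begin{equation*}
h^2-\frac{2hp}{\rho}+\frac{p^2}{\rho^2}(1-v^2)>1,
\qquad\text{i.e.}\qquad
\Bigl(h-\frac{p}{\rho}\Bigr)^2>1+\frac{p^2v^2}{\rho^2}.
\end{equation*}
The enthalpy inequality gives $(h-p/\rho)^2\ge 1+p^2/\rho^2$. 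Since $v<1$ and $p>0$, we have $1+p^2/\rho^2>1+p^2v^2/\rho^2$, which yields the strict inequality and hence $q>0$.

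\emph{Inclusion $\mathcal{G}\subseteq\mathcal{G}_{\prim}$.} This is the harder direction, because one must show that a physical primitive state exists for every $\con\in\mathcal{G}$. Given $D>0$ and $q>0$, we have $E>m$ and $E>D$. We solve for the pressure through the scalar equation
\begin{equation*}
\Phi(p):=E+p-\frac{m^2}{E+p}-D\,h\!\left(\rho(p),p\right)\sqrt{1-\frac{m^2}{(E+p)^2}}=0,
\end{equation*}
where $v(p)=m/(E+p)<1$ and $\rho(p)=D\sqrt{1-v(p)^2}>0$. This equation follows from the identity $E+p=\rho hW^2$ together with $\rho W=D$.

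To get existence, we study $\Phi$ on $p\in(0,\infty)$, following Wu--Tang~\cite{wu2017physical}. As $p\to 0^+$, the limit is $\Phi(0^+)=\sqrt{E^2-m^2}\,\bigl(\sqrt{E^2-m^2}-D\,h(\rho_0,0)\bigr)/E$. The enthalpy bound implies $h(\rho,0)\ge1$, and $h(\rho,0)=1$ for the EOS considered. So $\Phi(0^+)>0$ reduces exactly to $E^2-m^2>D^2$, which is $q>0$. As $p\to\infty$, the enthalpy bound gives $Dh\sqrt{1-v^2}=\rho h\ge \rho+2p$ asymptotically, so $\Phi(p)\le E-2p+o(p)\to-\infty$. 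By continuity there is a root $p^*>0$. We then read off $v=m/(E+p^*)<1$ and $\rho=D/W>0$, which shows $\con\in\mathcal{G}_{\prim}$.

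For the conversion to be well defined we also need the root to be unique, which comes from monotonicity of $\Phi$ (or of an equivalent function of $p$). I expect this to be the main obstacle. The plan is to differentiate and use the hyperbolicity condition $0<c_s<1$, that is, the sign of $n$ and of $\partial h/\partial\rho$, to show $\Phi'<0$. Failing that, we invoke the result of \cite{wu2017physical,cai2024provably}, which proves uniqueness of the physical solution under exactly these EOS assumptions.
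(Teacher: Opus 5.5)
The paper gives no proof of its own (it defers to Wu--Tang). Your argument---direct algebra with the enthalpy bound for $\mathcal{G}_{\prim}\subseteq\mathcal{G}$, then an intermediate-value root of the pressure equation for the converse---is the standard argument behind that citation and is correct, and you rightly single out $h(\rho,0^+)=1$ as an extra EOS property that is genuinely needed and not implied by the enthalpy bound; the uniqueness step you worry about also goes through as planned, since at any root $p^*$ one finds $\Phi'(p^*)=-n\,(1-c_s^2v^2)<0$ when $n>0$ (true for all four EOS). One slip to fix: as $p\to\infty$ use $\rho h\ge\sqrt{\rho^2+p^2}+p\ge 2p$, which gives $\Phi(p)\le E-p$ (not $E-2p+o(p)$) and still forces $\Phi\to-\infty$.
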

	
	\begin{remark}
		Proposition~\ref{prop:equivalence} confirms that $\mathcal{G}$ is the correct conservative-variable reformulation of the physical admissibility conditions. In particular, $q(\con) > 0$ serves as a single scalar
		certificate for both pressure positivity and the subluminal velocity constraint, making it the natural quantity to monitor and preserve in the numerical scheme.
	\end{remark}
	
	\begin{lemma}[Convexity of $\mathcal{G}$]
		\label{lem:convexity}
		The admissible set $\mathcal{G}$ is convex: for any $\con^A, \con^B \in \mathcal{G}$ and $\theta \in [0,1]$,
		\begin{align*}
			\con^\theta
			:=
			\theta\,\con^A + (1-\theta)\,\con^B
			\;\in\;
			\mathcal{G}.
		\end{align*}
	\end{lemma}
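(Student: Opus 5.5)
We prove convexity directly from the definition~\eqref{eq:admissible_set}. Since $\mathcal{G}$ is cut out by the two conditions $D>0$ and $q(\con)>0$, the plan is to show that each condition defines a convex set in $\mathbb{R}^4$ and then intersect.

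\textbf{Step 1 (the $D$-condition).} The first component is linear in $\con$, so
\begin{align*}
D^\theta = \theta D^A + (1-\theta) D^B > 0
\end{align*}
whenever $D^A, D^B > 0$ and $\theta\in[0,1]$. Indeed, at least one of the two weights $\theta$, $1-\theta$ is positive.

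\textbf{Step 2 (the $q$-condition).} Here we show that $q$ is a concave function of $\con$. Write $\mathbf{z} = (D, m_x, m_y)^T$, so that $q(\con) = E - \|\mathbf{z}\|_2$. The map $\con \mapsto E$ is linear, and $\con \mapsto \|\mathbf{z}\|_2$ is the Euclidean norm composed with a linear projection, hence convex. Concretely, the triangle inequality and homogeneity of the norm give
\begin{align*}
\|\theta \mathbf{z}^A + (1-\theta)\mathbf{z}^B\|_2 \le \theta\|\mathbf{z}^A\|_2 + (1-\theta)\|\mathbf{z}^B\|_2 .
\end{align*}
Subtracting this from the linear identity $E^\theta = \theta E^A + (1-\theta)E^B$ yields
\begin{align*}
q(\con^\theta) \ge \theta\, q(\con^A) + (1-\theta)\, q(\con^B) > 0 .
\end{align*}
The strict inequality again uses that the weights are nonnegative, sum to one, and that both $q(\con^A)$ and $q(\con^B)$ are positive.

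\textbf{Step 3 (conclusion).} Combining Steps 1 and 2 shows that $\con^\theta$ satisfies both defining conditions, so $\con^\theta\in\mathcal{G}$.

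There is no real obstacle in this argument. The only point to emphasise is why we work with $\mathcal{G}$ rather than $\mathcal{G}_\prim$. In $\mathcal{G}$ the conditions are stated in conservative variables, so no nonlinear conservative-to-primitive recovery is needed. The constraints in $\mathcal{G}_\prim$ are not manifestly convex, because $\rho$, $p$ and $\mathbf{v}$ depend on $\con$ implicitly through the EOS. Proposition~\ref{prop:equivalence} lets us transfer the convexity of $\mathcal{G}$ to $\mathcal{G}_\prim$, and the argument is independent of the particular EOS.
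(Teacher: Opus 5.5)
Your proof is correct and follows the paper's route. Linearity of $D$ gives positivity of the convex combination. Concavity of $q$ (linear $E$ minus the convex Euclidean norm of $(D,m_x,m_y)$) then gives $q(\con^\theta)\ge\theta\,q(\con^A)+(1-\theta)\,q(\con^B)>0$, and your use of the triangle inequality simply spells out the Jensen step the paper invokes.
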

	
	\begin{proof}
		Linearity of $D$ gives
		$D(\con^\theta) = \theta D(\con^A) + (1-\theta) D(\con^B) > 0$. Since $E$ is linear in $\con$ and $\sqrt{D^2 + m_x^2 + m_y^2}$ is convex, the $q$-function defined in~\eqref{eq:q_function} is concave. Jensen's
		inequality for concave functions then yields
		\begin{align*}
			q(\con^\theta)
			\ge
			\theta\,q(\con^A) + (1-\theta)\,q(\con^B)
			> 0.
		\end{align*}
		Hence $\con^\theta \in \mathcal{G}$.
	\end{proof}
	
	\begin{remark}
		\label{rem:convexity_role}
		The convexity of $\mathcal{G}$ implies that any convex combination of admissible states is again admissible. This property underpins both the state-limiting procedure in Step~2 of the algorithm below and the
		constraint-preservation proof for the multi-dimensional update in Theorem~\ref{thm:main}.
	\end{remark}
	Before proceeding to the algorithmic details, we introduce the following notations.
	\subsection{Wave-Speed Estimates and Grid Ratios}
	\label{subsec:wave_speeds}
	
	Let $S_x(\con)$ and $S_y(\con)$ denote the spectral radii of the flux Jacobians $\partial\fluxx/\partial\con$ and $\partial\fluxy/\partial\con$, respectively, i.e.\ the maximum absolute eigenvalues in each coordinate direction. At each time level $t^n$ we define the \emph{global} wave-speed estimates
	\begin{equation}
		\label{eq:wave_speeds}
		\alpha_x = \max_{i,j}\, S_x\!\left(\con_{i,j}^n\right),
		\qquad
		\alpha_y = \max_{i,j}\, S_y\!\left(\con_{i,j}^n\right),
	\end{equation}
	and the directional splitting weights
	\begin{equation*}
		\beta_x = \frac{\alpha_x/\Delta x}{\alpha_x/\Delta x + \alpha_y/\Delta y},
		\qquad
		\beta_y = \frac{\alpha_y/\Delta y}{\alpha_x/\Delta x + \alpha_y/\Delta y},
	\end{equation*}
	which satisfy $\beta_x + \beta_y = 1$. The time step $\Delta t$ is chosen to satisfy the CFL condition
	\begin{equation}
		\label{eq:cfl}
		\mathrm{CFL}
		:=
		\Delta t \left( \frac{\alpha_x}{\Delta x} + \frac{\alpha_y}{\Delta y} \right)
		\le \frac{1}{2}.
	\end{equation}
	We define the mesh ratios $\lambda_x, \lambda_y$ and the scaled parameters $\Lambda_x, \Lambda_y$ as
	\begin{equation*}
		\lambda_x = \frac{\Delta t}{\Delta x}, \quad \lambda_y = \frac{\Delta t}{\Delta y},
		\qquad
		\Lambda_x = \frac{\lambda_x}{\beta_x} = \frac{\mathrm{CFL}}{\alpha_x}, \quad \Lambda_y = \frac{\lambda_y}{\beta_y} = \frac{\mathrm{CFL}}{\alpha_y}.
	\end{equation*}
	
	\begin{remark}[Local vs.\ global wave speeds]
		\label{rem:local_global}
		The global wave-speed estimates~\eqref{eq:wave_speeds} define the uniform time step $\Delta t$ and scaling parameters $\Lambda_x, \Lambda_y$, ensuring that the global CFL bound~\eqref{eq:cfl} holds. In the local Lax--Friedrichs (LLF) flux~\eqref{eq:llf_x}--\eqref{eq:llf_y}, the local wave speeds $\alpha_{x,i+\frac{1}{2},j}$ and $\alpha_{y,i,j+\frac{1}{2}}$ are employed. As established in Lemma~\ref{lem:llf_positivity}, since $\alpha_{x,i+\frac{1}{2},j} \le \alpha_x$, the local coefficient satisfies $\mu_{i+\frac{1}{2},j} \le 2\,\mathrm{CFL} \le 1$, which rigorously guarantees physical admissibility of the LLF candidate states.
	\end{remark}
	
	\subsection{The PCP Algorithm}
	\label{subsec:pcp_algorithm}
	
	The complete numerical procedure at each time step is described below. All notation refers to the grid point $(i,j)$ and time level $t^n$. Operations in the $y$-direction are symmetric to those in the $x$-direction and are stated concisely.
	
	\subsubsection*{Step~1: High-Order Interface Reconstruction}
	
	From the grid-point values $\{\con_{i,j}^n\}$, apply the WENO interpolation component-wise (or via local characteristic decomposition; see Section~\ref{sec:afd_weno}) to obtain the left and right reconstructed
	interface states
	\begin{align*}
		\con^{\mathrm{rec},-}_{i+\frac{1}{2},j},\quad
		\con^{\mathrm{rec},+}_{i+\frac{1}{2},j},\quad
		\con^{\mathrm{rec},-}_{i,j+\frac{1}{2}},\quad
		\con^{\mathrm{rec},+}_{i,j+\frac{1}{2}}.
	\end{align*}
	
	\subsubsection*{Step~2: State-Limiting Procedure}
	
	High-order reconstruction does not in general preserve admissibility of the interface states. We therefore limit each reconstructed state toward a provably admissible \emph{anchor state} before flux evaluation.
	
	Define the anchor states as arithmetic averages of neighboring grid-point values:
	\begin{equation*}
		\con^A_{i+\frac{1}{2},j}
		= \tfrac{1}{2}\!\left(\con_{i,j}^n + \con_{i+1,j}^n\right),
		\qquad
		\con^A_{i,j+\frac{1}{2}}
		= \tfrac{1}{2}\!\left(\con_{i,j}^n + \con_{i,j+1}^n\right).
	\end{equation*}
	By Lemma~\ref{lem:convexity}, $\con^A \in \mathcal{G}$ whenever the neighboring grid-point values are admissible. Each reconstructed state $\con^{\mathrm{rec}}$ is then limited toward $\con^A$ in two stages to obtain the \emph{PCP-limited interface state} $\con^{\mathrm{pcp}}$:
	
	\begin{enumerate}[label=(\roman*)]
		\item \textbf{$D$-stage.}
		Set $\con^D = \theta_D\,\con^{\mathrm{rec}} + (1-\theta_D)\,\con^A$, where $\theta_D \in [0,1]$ is the largest value such that $D(\con^D) \ge \varepsilon_D > 0$.
		
		\item \textbf{$q$-constraint stage.}
		Set $\con^{\mathrm{pcp}} = \theta_Q\,\con^D + (1-\theta_Q)\,\con^A$, where $\theta_Q \in [0,1]$ is the largest value such that $q(\con^{\mathrm{pcp}}) \ge \varepsilon_q > 0$.
	\end{enumerate}
	
	Here $\varepsilon_D$ and $\varepsilon_q$ are small user-specified tolerances (typically $10^{-13}$). The $D$-component limiting parameter $\theta_D$ is computed analytically by linear interpolation:
	\begin{equation*}
		\theta_D = \begin{cases}
			\frac{D(\con^A) - \varepsilon_D}{D(\con^A) - D(\con^{\mathrm{rec}})}, & \text{if } D(\con^{\mathrm{rec}}) < \varepsilon_D, \\
			1, & \text{otherwise}.
		\end{cases}
	\end{equation*}
	Since the relation $q(\con^{\mathrm{pcp}}) \ge \varepsilon_q$ is nonlinear, the $q$-constraint limiting parameter $\theta_Q$ is computed using a bisection search in the interval $[0, 1]$ to solve $q(\theta \con^{D} + (1-\theta)\con^A) - \varepsilon_q = 0$. The PCP-limited states $\con^{\mathrm{pcp},\pm}$ replace the reconstructed states $\con^{\mathrm{rec},\pm}$ in all subsequent steps.
	
	\subsubsection*{Step~3: Low-Order Local Lax--Friedrichs Flux}
	
	Evaluate the low-order LLF numerical fluxes using the grid-point values:
	\begin{align}
		\hat{\fluxx}^{\mathrm{LLF}}_{i+\frac{1}{2},j}
		&= \tfrac{1}{2}\!\left[
		\fluxx\!\left(\con_{i,j}^n\right)
		+ \fluxx\!\left(\con_{i+1,j}^n\right)
		- \alpha_{x,i+\frac{1}{2},j}
		\left(
		\con_{i+1,j}^n
		- \con_{i,j}^n
		\right)
		\right], \label{eq:llf_x} \\
		\hat{\fluxy}^{\mathrm{LLF}}_{i,j+\frac{1}{2}}
		&= \tfrac{1}{2}\!\left[
		\fluxy\!\left(\con_{i,j}^n\right)
		+ \fluxy\!\left(\con_{i,j+1}^n\right)
		- \alpha_{y,i,j+\frac{1}{2}}
		\left(
		\con_{i,j+1}^n
		- \con_{i,j}^n
		\right)
		\right], \label{eq:llf_y}
	\end{align}
	where $\alpha_{x,i+\frac{1}{2},j}$ and $\alpha_{y,i,j+\frac{1}{2}}$ are local wave-speed estimates at each interface (see Remark~\ref{rem:local_global}).
	
	\subsubsection*{Step~4: High-Order AFD-WENO Flux}
	
	Evaluate the LLF flux on the PCP-limited reconstructed interface states, and add the high-order correction term (Section~\ref{sec:afd_weno}) to obtain the high-order AFD-WENO fluxes:
	\begin{align*}
		\hat{\fluxx}^{\mathrm{AWENO}}_{i+\frac{1}{2},j}
		&= \hat{\fluxx}^{\mathrm{LLF,rec}}_{i+\frac{1}{2},j}
		+ \hat{\fluxx}^{\mathrm{cor}}_{i+\frac{1}{2},j}, \\
		\hat{\fluxy}^{\mathrm{AWENO}}_{i,j+\frac{1}{2}}
		&= \hat{\fluxy}^{\mathrm{LLF,rec}}_{i,j+\frac{1}{2}}
		+ \hat{\fluxy}^{\mathrm{cor}}_{i,j+\frac{1}{2}}, \end{align*}
	where $\hat{\fluxx}^{\mathrm{LLF,rec}}_{i+\frac{1}{2},j}$ and $\hat{\fluxy}^{\mathrm{LLF,rec}}_{i,j+\frac{1}{2}}$ are the LLF fluxes evaluated on the PCP-limited interface states (e.g., using $\con^{\mathrm{pcp},\pm}$ instead of grid-point values in~\eqref{eq:llf_x}--\eqref{eq:llf_y}). The correction terms $\hat{\fluxx}^{\mathrm{cor}}$ and $\hat{\fluxy}^{\mathrm{cor}}$ are computed solely from the physical flux values at the grid points and carry no admissibility guarantee on their own. The high-order AFD-WENO fluxes may therefore produce inadmissible forward-Euler candidate states, which motivates the flux-limiting step that follows.
	
	\subsubsection*{Step~5: Two-Stage Flux Limiter}
	
	For each interface, the AFD-WENO flux is blended toward the LLF flux by the
	smallest amount necessary to restore admissibility of the one-sided
	forward-Euler candidate states. We describe the procedure for the
	$x$-direction; the $y$-direction is handled symmetrically.
	
	For the interface between grid points $i$ and $i{+}1$ (suppressing the
	$j$-index), define the \emph{one-sided forward-Euler candidate states}
	associated with a generic interface flux $\mathbf{F}$:
	\begin{equation}
		\label{eq:euler_states}
		\con^+[\mathbf{F}]
		= \con_{i,j}^n   - 2\Lambda_x\,\mathbf{F},
		\qquad
		\con^-[\mathbf{F}]
		= \con_{i+1,j}^n + 2\Lambda_x\,\mathbf{F}.
	\end{equation}
	Denote $\con^{\pm,W} = \con^\pm[\hat{\fluxx}^{\mathrm{AWENO}}]$ and
	$\con^{\pm,L} = \con^\pm[\hat{\fluxx}^{\mathrm{LLF}}]$.
	
	The key observation is that the LLF flux computed from the grid-point values
	$\con_{i,j}^n$ and $\con_{i+1,j}^n$ produces admissible candidate
	states $\con^{\pm,L} \in \mathcal{G}$ under the CFL condition~\eqref{eq:cfl},
	as proved in Lemma~\ref{lem:llf_positivity}. The flux limiter is designed
	to blend the high-order AFD-WENO flux toward this LLF flux to restore admissibility
	of the candidate states. We first choose sufficiently small positive numbers $\tilde{\varepsilon}_D$ and $\tilde{\varepsilon}_q$ such that $D(\con^{\pm,L})\geq\tilde{\varepsilon}_D$ and $q(\con^{\pm,L})\geq\tilde{\varepsilon}_q$ for all interfaces ($10^{-13}$ is used in the numerical simulations). This is possible because Lemma~\ref{lem:llf_positivity} guarantees that for the LLF flux $D(\con^{\pm,L})> 0$ and $q(\con^{\pm,L})> 0$. We then have the following two-stage flux-limiting procedure.
	
	\medskip
	\noindent\textbf{Stage~I ($D$).}
	Construct the intermediate flux by limiting only the $D$-component of
	the flux vector, leaving the momentum and energy components at their
	high-order AFD-WENO values:
	\begin{equation}
		\label{eq:fd_flux}
		\hat{\fluxx}^D
		=
		\Bigl(
		(1-\theta_D)\,\hat{f}^{\mathrm{LLF}}_{[0]}
		+ \theta_D\,\hat{f}^{\mathrm{AWENO}}_{[0]},\;\;
		\hat{f}^{\mathrm{AWENO}}_{[1]},\;\;
		\hat{f}^{\mathrm{AWENO}}_{[2]},\;\;
		\hat{f}^{\mathrm{AWENO}}_{[3]}
		\Bigr)^T,
	\end{equation}
	where subscript $[\,\cdot\,]$ denotes the corresponding component and
	\begin{equation}
		\label{eq:thetaD_flux}
		\theta_D
		=
		\min\!\Biggl(1,\;
		\Bigl[
		\frac{D(\con^{+,L}) - \tilde{\varepsilon}_D}
		{D(\con^{+,L}) - D(\con^{+,W})}
		\Bigr]_{D(\con^{+,W}) < \tilde{\varepsilon}_D},\;
		\Bigl[
		\frac{D(\con^{-,L}) - \tilde{\varepsilon}_D}
		{D(\con^{-,L}) - D(\con^{-,W})}
		\Bigr]_{D(\con^{-,W}) < \tilde{\varepsilon}_D}
		\Biggr).
	\end{equation}
	Denote $\con^{\pm,D} = \con^\pm[\hat{\fluxx}^D]$.
	
	\medskip
	\noindent\textbf{Stage~II ($q$-constraint).}
	Define the final PCP flux by blending $\hat{\fluxx}^D$ with the LLF flux:
	\begin{equation}
		\label{eq:pcp_flux_final}
		\hat{\fluxx}^{\mathrm{PCP}}
		=
		(1-\theta_Q)\,\hat{\fluxx}^{\mathrm{LLF}}
		+ \theta_Q\,\hat{\fluxx}^D,
	\end{equation}
	where
	\begin{equation}
		\label{eq:thetaQ}
		\theta_Q
		=
		\min\!\Biggl(1,\;
		\Bigl[
		\frac{q(\con^{+,L}) - \tilde{\varepsilon}_q}
		{q(\con^{+,L}) - q(\con^{+,D})}
		\Bigr]_{q(\con^{+,D}) < \tilde{\varepsilon}_q},\;
		\Bigl[
		\frac{q(\con^{-,L}) - \tilde{\varepsilon}_q}
		{q(\con^{-,L}) - q(\con^{-,D})}
		\Bigr]_{q(\con^{-,D}) < \tilde{\varepsilon}_q}
		\Biggr).
	\end{equation}
	A symmetric procedure yields $\hat{\fluxy}^{\mathrm{PCP}}$.
	
	\subsubsection*{Step~6: Conservative Update}
	Finally, we can discretize the semi-discrete scheme~\eqref{semi_discrete_scheme_2D} in time using a time-stepping method that preserves admissibility. For simplicity, we present the update using the forward-Euler method. Specifically, the grid-point values are advanced to time level $t^{n+1}$ by
	\begin{equation}
		\label{eq:update}
		\con_{i,j}^{n+1}
		=
		\con_{i,j}^n
		- \frac{\Delta t}{\Delta x}\!\left(
		\hat{\fluxx}^{\mathrm{PCP}}_{i+\frac{1}{2},j}
		- \hat{\fluxx}^{\mathrm{PCP}}_{i-\frac{1}{2},j}
		\right)
		- \frac{\Delta t}{\Delta y}\!\left(
		\hat{\fluxy}^{\mathrm{PCP}}_{i,j+\frac{1}{2}}
		- \hat{\fluxy}^{\mathrm{PCP}}_{i,j-\frac{1}{2}}
		\right).
	\end{equation}
	An overview of the complete algorithm is provided in Figure~\ref{fig:pcp_flowchart}.
	\begin{figure}[htbp]
		\centering
		\begin{tikzpicture}[
			node distance=1.5cm and 2.5cm,
			align=center,
			every node/.style={font=\footnotesize},
			startstop/.style={rectangle, rounded corners, minimum width=3cm, minimum height=0.7cm, text centered, draw=black, fill=green!10},
			process/.style={rectangle, minimum width=3.8cm, minimum height=0.7cm, text centered, draw=black, fill=blue!10},
			limiter/.style={rectangle, minimum width=3.8cm, minimum height=0.7cm, text centered, draw=black, fill=orange!10},
			arrow/.style={thick,->,>=stealth}
			]
			
			\node (start) [startstop] {Admissible Grid States\\$\con_{i,j}^n \in \mathcal{G}$};
			
			\node (step1) [process, below left=1.0cm and 0.8cm of start] {Step 1: High-Order Reconstruction\\Obtain $\con^{\mathrm{rec},\pm}_{i+1/2,j}, \con^{\mathrm{rec},\pm}_{i,j+1/2}$};
			\node (step2) [limiter, below of=step1, node distance=1.8cm] {Step 2: State-Limiting Procedure\\Limit $\con^{\mathrm{rec}}$ toward anchor state $\con^A$};
			\node (step4) [process, below of=step2, node distance=1.8cm] {Step 4: High-Order AFD-WENO Flux\\Evaluate $\hat{\fluxx}^{\mathrm{AWENO}}$ using limited states};
			
			\node (step3) [process, below right=1.0cm and 0.8cm of start] {Step 3: Low-Order LLF Flux\\Evaluate $\hat{\fluxx}^{\mathrm{LLF}}$ using $\con_{i,j}^n$};
			
			\node (step5) [limiter, below right=1.8cm and 0.8cm of step4] {Step 5: Two-Stage Flux Limiter\\Blend $\hat{\fluxx}^{\mathrm{AWENO}}$ toward $\hat{\fluxx}^{\mathrm{LLF}}$ to restore\\admissibility of candidate states};
			
			\node (step6) [process, below of=step5, node distance=1.8cm] {Step 6: Conservative Update\\Compute $\con_{i,j}^{n+1} \in \mathcal{G}$};
			
			\node (end) [startstop, below of=step6, node distance=1.3cm] {Admissible Grid States\\$\con_{i,j}^{n+1} \in \mathcal{G}$};
			
			\draw [arrow] (start) -| (step1.north);
			\draw [arrow] (start) -| (step3.north);
			\draw [arrow] (step1) -- (step2);
			\draw [arrow] (step2) -- (step4);
			\draw [arrow] (step3.south) |- (step5.east);
			\draw [arrow] (step4.south) |- (step5.west);
			\draw [arrow] (step5) -- (step6);
			\draw [arrow] (step6) -- (end);
			
		\end{tikzpicture}
		\caption{Flowchart of the Physical Constraint Preserving (PCP) algorithm for AFD-WENO schemes.}
		\label{fig:pcp_flowchart}
	\end{figure}
	\subsection{Theoretical Analysis}
	\label{subsec:theory}
	
	We now prove that the algorithm preserves $\mathcal{G}$ at every time step.
	The argument proceeds through three lemmas before culminating in the main
	theorem.
	
	\begin{lemma}[LLF forward-Euler positivity]
		\label{lem:llf_positivity}
		Suppose $\mathrm{CFL} \le \tfrac{1}{2}$ and the grid-point values $\con_{i,j}^n \in \mathcal{G}$ for all $(i,j)$. Let the local wave speed satisfy $\alpha_{x,i+\frac{1}{2},j} = \max\left(S_x(\con_{i,j}^n),\, S_x(\con_{i+1,j}^n)\right) \le \alpha_x$. Then the one-sided forward-Euler candidate states $\con^{\pm,L}$ computed with the local Lax--Friedrichs (LLF) flux~\eqref{eq:llf_x} belong to $\mathcal{G}$.
	\end{lemma}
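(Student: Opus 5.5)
The plan is to write each one-sided candidate state $\con^{\pm,L}$ as a convex combination of admissible states, and then invoke the convexity of $\mathcal{G}$ (Lemma~\ref{lem:convexity}). Suppress the index $j$, write $\con_i=\con_{i,j}^n$, $\mathbf{f}_i=\fluxx(\con_i)$ and $a=\alpha_{x,i+\frac12,j}$, and recall that $\Lambda_x=\mathrm{CFL}/\alpha_x$. Substituting \eqref{eq:llf_x} into \eqref{eq:euler_states} gives $\con^{+,L}=(1-\Lambda_x a)\con_i+\Lambda_x a\,\con_{i+1}-\Lambda_x(\mathbf{f}_i+\mathbf{f}_{i+1})$. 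Regrouping the terms, this reads
\begin{equation*}
\con^{+,L}=(1-2\Lambda_x a)\,\con_i+\Lambda_x a\Bigl(\con_i-\tfrac{1}{a}\mathbf{f}_i\Bigr)+\Lambda_x a\Bigl(\con_{i+1}-\tfrac{1}{a}\mathbf{f}_{i+1}\Bigr).
\end{equation*}
The analogous identity for $\con^{-,L}$ has $+\tfrac1a\mathbf{f}_i$ and $+\tfrac1a\mathbf{f}_{i+1}$ and the roles of $i$ and $i+1$ exchanged. Since $a\le\alpha_x$, we have $\Lambda_x a\le\mathrm{CFL}\le\tfrac12$. Hence the three weights $1-2\Lambda_x a$, $\Lambda_x a$, $\Lambda_x a$ are nonnegative and sum to one. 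This is exactly where the bound \eqref{eq:cfl} enters.

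It then suffices to prove the following key property (the Lax--Friedrichs splitting property). For $\con\in\mathcal{G}$ and any $a\ge S_x(\con)$, both states $\con\pm\tfrac1a\fluxx(\con)$ lie in $\mathcal{G}$. The hypothesis $a\ge\max(S_x(\con_i),S_x(\con_{i+1}))$ makes this applicable to both grid points. If $1-2\Lambda_x a=0$, the first term simply drops out; the remaining two terms still form a convex combination of admissible states. By convexity, $\con^{\pm,L}\in\mathcal{G}$ then follows, including the strict inequalities.

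To prove the key property, I would pass to primitive variables. For the $D$-component, $D\pm Dv_x/a=D(1\pm v_x/a)$. This is positive because the characteristic speeds of the relativistic system bracket $v_x$, so $S_x(\con)\ge|v_x|$; and $|v_x|<1$. In the non-degenerate case the bracketing is strict, so $a>|v_x|$.

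For the $q$-component I need to show
\begin{equation*}
E\pm\tfrac{m_x}{a}>\sqrt{D^2(1\pm\tfrac{v_x}{a})^2+(m_x(1\pm\tfrac{v_x}{a})\pm\tfrac pa)^2+m_y^2(1\pm\tfrac{v_x}{a})^2}.
\end{equation*}
After squaring and substituting $D=\rho W$, $m=\rho hW^2\mathbf{v}$ and $E=\rho hW^2-p$, this becomes an inequality in $(\rho,p,\mathbf{v},a)$. I would close it using the kinetic-theory bound $h\ge\sqrt{1+p^2/\rho^2}+p/\rho$ together with the explicit lower bound for $S_x$ from the eigenvalues, which involves $c_s$. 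This is the argument of Wu and Tang for a general EOS~\cite{wu2017physical}, and I would follow or cite it.

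I expect the $q$-part of the key property to be the main obstacle. The reason is that the worst case sits at $a=S_x(\con)$ exactly, where the eigenvalue expression in $c_s$ must be combined with the enthalpy inequality. I would first try to show that the difference of squares is monotone in $1/a$. This would reduce the claim to the boundary value $a=S_x(\con)$, or even to the weaker requirement $a\ge|v_x|$ plus an auxiliary inequality using $h$. If that reduction fails, I would fall back on citing the general-EOS lemma of~\cite{wu2017physical} directly.
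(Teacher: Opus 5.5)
Your proposal is correct and is essentially the paper's proof. The paper uses the same convex decomposition of $\con^{\pm,L}$ with weights $1-\mu$, $\mu/2$, $\mu/2$, where $\mu=2\Lambda_x\alpha_{x,i+\frac12,j}\le 2\,\mathrm{CFL}\le 1$ (your $1-2\Lambda_x a$, $\Lambda_x a$, $\Lambda_x a$), and concludes via the Lax--Friedrichs splitting property and Lemma~\ref{lem:convexity}. It does not prove the splitting property for a general EOS but cites it from Wu and Tang~\cite{wu2017physical}, so your fallback of citing that lemma is exactly what the paper does.
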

	
	\begin{proof}
		Define the local parameter $\mu_{i+\frac{1}{2},j} := 2\Lambda_x\,\alpha_{x,i+\frac{1}{2},j} = 2\,\mathrm{CFL}\left(\frac{\alpha_{x,i+\frac{1}{2},j}}{\alpha_x}\right)$. Since $\alpha_{x,i+\frac{1}{2},j} \le \alpha_x$ and $\mathrm{CFL} \le \frac{1}{2}$, we have $0 \le \mu_{i+\frac{1}{2},j} \le 2\,\mathrm{CFL} \le 1$. Substituting the low-order LLF flux~\eqref{eq:llf_x} (evaluated on grid-point values) into $\con^{+,L} = \con_{i,j}^n - 2\Lambda_x\,\hat{\fluxx}^{\mathrm{LLF}}_{i+\frac{1}{2},j}$ gives
		\begin{align*}
			\con^{+,L}
			=
			(1-\mu_{i+\frac{1}{2},j})\,\con_{i,j}^n
			+ \frac{\mu_{i+\frac{1}{2},j}}{2}\underbrace{\!\left(
				\con_{i,j}^n
				- \frac{\fluxx(\con_{i,j}^n)}{\alpha_{x,i+\frac{1}{2},j}}
				\right)}_{\displaystyle=:\,\con^{\ominus}_{i,j}}
			+ \frac{\mu_{i+\frac{1}{2},j}}{2}\underbrace{\!\left(
				\con_{i+1,j}^n
				- \frac{\fluxx(\con_{i+1,j}^n)}{\alpha_{x,i+\frac{1}{2},j}}
				\right)}_{\displaystyle=:\,\con^{\ominus}_{i+1,j}}.
		\end{align*}
		The coefficients $(1-\mu_{i+\frac{1}{2},j})$, $\mu_{i+\frac{1}{2},j}/2$, $\mu_{i+\frac{1}{2},j}/2$ are non-negative and sum to one, so $\con^{+,L}$ is a convex combination of $\con_{i,j}^n$, $\con^{\ominus}_{i,j}$, and $\con^{\ominus}_{i+1,j}$. Under the local wave-speed bound $\alpha_{x,i+\frac{1}{2},j} \ge \max\left(S_x(\con_{i,j}^n), S_x(\con_{i+1,j}^n)\right)$, the states $\con \pm \fluxx(\con)/\alpha_{x,i+\frac{1}{2},j}$ lie in $\mathcal{G}$ whenever $\con \in \mathcal{G}$~\cite{wu2017physical}; hence $\con^{\ominus}_{i,j}, \con^{\ominus}_{i+1,j} \in \mathcal{G}$. Since $\con_{i,j}^n \in \mathcal{G}$ by assumption, Lemma~\ref{lem:convexity} implies $\con^{+,L} \in \mathcal{G}$.
		
		Similarly, substituting the LLF flux into $\con^{-,L} = \con_{i+1,j}^n + 2\Lambda_x\,\hat{\fluxx}^{\mathrm{LLF}}_{i+\frac{1}{2},j}$ yields
		\begin{align*}
			\con^{-,L}
			=
			(1-\mu_{i+\frac{1}{2},j})\,\con_{i+1,j}^n
			+ \frac{\mu_{i+\frac{1}{2},j}}{2}\underbrace{\!\left(
				\con_{i,j}^n
				+ \frac{\fluxx(\con_{i,j}^n)}{\alpha_{x,i+\frac{1}{2},j}}
				\right)}_{\displaystyle=:\,\con^{\oplus}_{i,j}}
			+ \frac{\mu_{i+\frac{1}{2},j}}{2}\underbrace{\!\left(
				\con_{i+1,j}^n
				+ \frac{\fluxx(\con_{i+1,j}^n)}{\alpha_{x,i+\frac{1}{2},j}}
				\right)}_{\displaystyle=:\,\con^{\oplus}_{i+1,j}},
		\end{align*}
		which is a convex combination of $\con_{i+1,j}^n$, $\con^{\oplus}_{i,j}$, and $\con^{\oplus}_{i+1,j}$. Since these states all lie in $\mathcal{G}$, Lemma~\ref{lem:convexity} implies $\con^{-,L} \in \mathcal{G}$.
	\end{proof}
	
	\begin{lemma}[Stage~I: $D$ preservation]
		\label{lem:stage1}
		The intermediate candidate states
		$\con^{\pm,D} = \con^\pm[\hat{\fluxx}^D]$ satisfy
		$D(\con^{\pm,D}) \ge \tilde{\varepsilon}_D$.
	\end{lemma}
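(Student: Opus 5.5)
The argument uses only the linearity of the first component of the candidate map $\mathbf{F}\mapsto\con^\pm[\mathbf{F}]$ in~\eqref{eq:euler_states}. Since $\hat{\fluxx}^D$ has first component $(1-\theta_D)\hat f^{\mathrm{LLF}}_{[0]}+\theta_D\hat f^{\mathrm{AWENO}}_{[0]}$, and $D(\con^\pm[\mathbf{F}])$ depends only on $\mathbf{F}_{[0]}$ (namely $D(\con^+[\mathbf{F}])=D_{i,j}^n-2\Lambda_x\mathbf{F}_{[0]}$ and $D(\con^-[\mathbf{F}])=D_{i+1,j}^n+2\Lambda_x\mathbf{F}_{[0]}$), we obtain the exact identity
\begin{equation*}
D(\con^{\pm,D}) = (1-\theta_D)\,D(\con^{\pm,L}) + \theta_D\,D(\con^{\pm,W}).
\end{equation*}
The momentum and energy components of $\hat{\fluxx}^D$ play no role here. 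This decoupling is exactly why Stage~I limits only the $D$-component.

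We then treat each side ($+$ and $-$) separately and split into two cases, using $D(\con^{\pm,L})\ge\tilde{\varepsilon}_D$ from the choice of $\tilde{\varepsilon}_D$, which Lemma~\ref{lem:llf_positivity} makes possible.

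\emph{Case 1: $D(\con^{\pm,W})\ge\tilde{\varepsilon}_D$.} The right-hand side is a convex combination (with $\theta_D\in[0,1]$) of two numbers that are both at least $\tilde{\varepsilon}_D$, so it is at least $\tilde{\varepsilon}_D$ for every admissible $\theta_D$.

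\emph{Case 2: $D(\con^{\pm,W})<\tilde{\varepsilon}_D$.} Now $D(\con^{\pm,L})-D(\con^{\pm,W})>0$, and the ratio $\theta^\pm:=\frac{D(\con^{\pm,L})-\tilde{\varepsilon}_D}{D(\con^{\pm,L})-D(\con^{\pm,W})}$ in~\eqref{eq:thetaD_flux} lies in $[0,1)$. The map $\theta\mapsto(1-\theta)D(\con^{\pm,L})+\theta D(\con^{\pm,W})$ is affine and nonincreasing, and it equals $\tilde{\varepsilon}_D$ exactly at $\theta=\theta^\pm$. By~\eqref{eq:thetaD_flux}, $\theta_D=\min(1,\ldots)\le\theta^\pm$, so $D(\con^{\pm,D})\ge\tilde{\varepsilon}_D$.

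The same $\theta_D$ serves both sides because it is the minimum over both active constraints. Lowering $\theta$ never hurts either side: in Case 1 any $\theta$ works, and in Case 2 the map is monotone. The main thing to verify carefully is that $\theta_D\in[0,1]$, which requires the nonnegativity of $D(\con^{\pm,L})-\tilde{\varepsilon}_D$. That is exactly the standing choice of $\tilde{\varepsilon}_D$, so no real obstacle remains.
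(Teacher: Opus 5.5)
Your proof is correct and follows the paper's route: the affine identity $D(\con^{\pm,D})=(1-\theta_D)\,D(\con^{\pm,L})+\theta_D\,D(\con^{\pm,W})$, then a case split on whether $D(\con^{\pm,W})<\tilde{\varepsilon}_D$. Your version is also more careful than the paper's, which asserts $\theta_D=1$ in the first case and exact equality $D(\con^{+,D})=\tilde{\varepsilon}_D$ in the second; both claims hold only when the other side's term does not attain the minimum in~\eqref{eq:thetaD_flux}, whereas your convexity argument and the monotonicity argument with $\theta_D\le\theta^\pm$ correctly handle the single $\theta_D$ shared by both candidate states.
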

	
	\begin{proof}
		Since $\hat{\fluxx}^D$ modifies only the $D$-component of the flux
		(see~\eqref{eq:fd_flux}), the $D$-component of $\con^{+,D}$ satisfies
		\begin{align*}
			D(\con^{+,D})
			=
			(1-\theta_D)\,D(\con^{+,L}) + \theta_D\,D(\con^{+,W}).
		\end{align*}
		If $D(\con^{+,W}) \ge \tilde{\varepsilon}_D$, then $\theta_D = 1$ and the result is
		immediate. Otherwise, substituting the explicit formula for $\theta_D$
		from~\eqref{eq:thetaD_flux} yields $D(\con^{+,D}) = \tilde{\varepsilon}_D$.
		An identical argument applies to $\con^{-,D}$.
	\end{proof}
	
	\begin{lemma}[Stage~II: $q$-constraint and $D$ preservation]
		\label{lem:stage2}
		The final candidate states
		$\con^{\pm,\mathrm{PCP}} = \con^\pm[\hat{\fluxx}^{\mathrm{PCP}}]$
		satisfy $q(\con^{\pm,\mathrm{PCP}}) \ge \tilde{\varepsilon}_q$ and
		$D(\con^{\pm,\mathrm{PCP}}) \ge \tilde{\varepsilon}_D$.
	\end{lemma}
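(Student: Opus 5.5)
The plan is to use the affine dependence of the candidate states on the flux, together with concavity of $q$. Since $\con^\pm[\mathbf{F}]$ in \eqref{eq:euler_states} is affine in $\mathbf{F}$, the convex blend \eqref{eq:pcp_flux_final} gives
\begin{align*}
\con^{\pm,\mathrm{PCP}} = (1-\theta_Q)\,\con^{\pm,L} + \theta_Q\,\con^{\pm,D}.
\end{align*}
This works because $\con_{i,j}^n = (1-\theta_Q)\con_{i,j}^n + \theta_Q\con_{i,j}^n$, and similarly for $\con_{i+1,j}^n$. Both constraints will therefore be read off a convex combination of the LLF candidate state and the Stage~I candidate state.

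\emph{$D$ bound.} $D$ is a linear function of $\con$, so
\begin{align*}
D(\con^{\pm,\mathrm{PCP}}) = (1-\theta_Q)\,D(\con^{\pm,L}) + \theta_Q\,D(\con^{\pm,D}).
\end{align*}
By the choice of $\tilde{\varepsilon}_D$ we have $D(\con^{\pm,L}) \ge \tilde{\varepsilon}_D$. By Lemma~\ref{lem:stage1}, $D(\con^{\pm,D}) \ge \tilde{\varepsilon}_D$. Since $\theta_Q \in [0,1]$, the $D$ bound follows.

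\emph{$q$ bound.} First I will check that $\theta_Q \in [0,1]$. Whenever a bracketed term in \eqref{eq:thetaQ} is active, we have $q(\con^{\pm,D}) < \tilde{\varepsilon}_q \le q(\con^{\pm,L})$. Its denominator is then positive and its numerator is nonnegative, so the ratio lies in $[0,1)$. Concavity of $q$, shown in the proof of Lemma~\ref{lem:convexity}, then gives
\begin{align*}
q(\con^{\pm,\mathrm{PCP}}) \ge (1-\theta_Q)\,q(\con^{\pm,L}) + \theta_Q\,q(\con^{\pm,D}) =: \phi_\pm(\theta_Q).
\end{align*}
The function $\phi_\pm$ is affine in $\theta$ and satisfies $\phi_\pm(0) = q(\con^{\pm,L}) \ge \tilde{\varepsilon}_q$. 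There are two cases.
\begin{itemize}
\item If $q(\con^{\pm,D}) \ge \tilde{\varepsilon}_q$, then $\phi_\pm \ge \tilde{\varepsilon}_q$ on all of $[0,1]$.
\item Otherwise $\phi_\pm$ is nonincreasing, and it equals $\tilde{\varepsilon}_q$ exactly at the bracketed value $\theta^\ast_\pm$. The minimum in \eqref{eq:thetaQ} gives $\theta_Q \le \theta^\ast_\pm$, hence $\phi_\pm(\theta_Q) \ge \tilde{\varepsilon}_q$.
\end{itemize}

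The main subtlety is that Stage~II uses a single $\theta_Q$ for both sides $\pm$ and blends the whole flux vector. One must make sure this common choice does not spoil either the $D$ bound or the other side's $q$ bound. Taking the minimum handles the $q$ bounds via the monotonicity just described, and the linearity of $D$ handles the $D$ bound. The other point to state explicitly is that concavity, not linearity, of $q$ is what gives only a lower bound, and a lower bound is all that is needed here.
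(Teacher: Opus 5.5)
Your proof is correct and follows the same route as the paper. Both write $\con^{\pm,\mathrm{PCP}}$ as the $\theta_Q$-blend of $\con^{\pm,L}$ and $\con^{\pm,D}$, get the $D$ bound from linearity of $D$ and Lemma~\ref{lem:stage1}, and get the $q$ bound from concavity of $q$. Your monotonicity argument for the affine function $\phi_\pm$ is in fact more careful than the paper's. The paper asserts $\theta_Q = 1$ whenever $q(\con^{+,D}) \ge \tilde{\varepsilon}_q$, and claims exact equality $=\tilde{\varepsilon}_q$ after substituting $\theta_Q$; both hold only when the minimum in \eqref{eq:thetaQ} is attained by that side's own bracket.
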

	
	\begin{proof}
		From~\eqref{eq:euler_states} and~\eqref{eq:pcp_flux_final},
		\begin{align*}
			\con^{+,\mathrm{PCP}}
			=
			(1-\theta_Q)\,\con^{+,L} + \theta_Q\,\con^{+,D}.
		\end{align*}
		\textit{$q$-constraint.}
		If $q(\con^{+,D}) \ge \tilde{\varepsilon}_q$, then $\theta_Q = 1$ and the claim
		follows immediately. Otherwise $\theta_Q \in [0,1)$ and the concavity of $q$
		(Lemma~\ref{lem:convexity}) yields
		\begin{align*}
			q(\con^{+,\mathrm{PCP}})
			\ge
			(1-\theta_Q)\,q(\con^{+,L}) + \theta_Q\,q(\con^{+,D})
			=
			\tilde{\varepsilon}_q,
		\end{align*}
		where the equality is obtained by substituting $\theta_Q$
		from~\eqref{eq:thetaQ}.
		
		\textit{$D$-component.}
		Since $D(\con^{+,L}) \ge \tilde{\varepsilon}_D$	and $D(\con^{+,D}) \ge \tilde{\varepsilon}_D$ (Lemma~\ref{lem:stage1}), their convex
		combination satisfies $D(\con^{+,\mathrm{PCP}}) \ge \tilde{\varepsilon}_D$. The
		argument for $\con^{-,\mathrm{PCP}}$ is identical.
	\end{proof}
	
	\begin{theorem}[Constraint preservation]
		\label{thm:main}
		Suppose the grid-point values $\con_{i,j}^n \in \mathcal{G}$ for all
		$(i,j)$ and $\mathrm{CFL} \le \tfrac{1}{2}$. Then the updated grid-point
		values $\con_{i,j}^{n+1}$ produced by~\eqref{eq:update} with the two-stage
		PCP flux limiter also belong to $\mathcal{G}$.
	\end{theorem}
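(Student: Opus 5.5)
The approach is to write the update~\eqref{eq:update} at a fixed grid point $(i,j)$ as a convex combination of the one-sided forward-Euler candidate states of Lemma~\ref{lem:stage2}, and then conclude with the convexity of $\mathcal{G}$ (Lemma~\ref{lem:convexity}). Using $\beta_x+\beta_y=1$ and $\lambda_x=\beta_x\Lambda_x$, $\lambda_y=\beta_y\Lambda_y$, I would split
\begin{align*}
\con_{i,j}^{n+1}
&= \beta_x\Bigl[\con_{i,j}^n - \Lambda_x\bigl(\hat{\fluxx}^{\mathrm{PCP}}_{i+\frac12,j}-\hat{\fluxx}^{\mathrm{PCP}}_{i-\frac12,j}\bigr)\Bigr]
+ \beta_y\Bigl[\con_{i,j}^n - \Lambda_y\bigl(\hat{\fluxy}^{\mathrm{PCP}}_{i,j+\frac12}-\hat{\fluxy}^{\mathrm{PCP}}_{i,j-\frac12}\bigr)\Bigr].
\end{align*}
Each bracket is then treated separately, and by convexity it suffices to show that both brackets lie in $\mathcal{G}$.

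For the $x$-bracket, I would further split $\con_{i,j}^n=\tfrac12\con_{i,j}^n+\tfrac12\con_{i,j}^n$ to get
\begin{align*}
\con_{i,j}^n - \Lambda_x\bigl(\hat{\fluxx}^{\mathrm{PCP}}_{i+\frac12,j}-\hat{\fluxx}^{\mathrm{PCP}}_{i-\frac12,j}\bigr)
= \tfrac12\bigl(\con_{i,j}^n - 2\Lambda_x\hat{\fluxx}^{\mathrm{PCP}}_{i+\frac12,j}\bigr)
+ \tfrac12\bigl(\con_{i,j}^n + 2\Lambda_x\hat{\fluxx}^{\mathrm{PCP}}_{i-\frac12,j}\bigr).
\end{align*}
By~\eqref{eq:euler_states}, the first term is the candidate state $\con^{+,\mathrm{PCP}}$ at interface $i+\frac12$, where grid point $i$ is the left neighbour. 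The second term is the candidate state $\con^{-,\mathrm{PCP}}$ at interface $i-\frac12$, where grid point $i$ is the right neighbour, so the base state $\con^n_{(i-1)+1,j}=\con^n_{i,j}$ matches. Lemma~\ref{lem:stage2} gives $D\ge\tilde\varepsilon_D>0$ and $q\ge\tilde\varepsilon_q>0$ for both states, so both lie in $\mathcal{G}$. Their average therefore lies in $\mathcal{G}$ by Lemma~\ref{lem:convexity}. The $y$-bracket is handled identically with $\Lambda_y$ and the symmetric $y$-limiter.

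The main obstacle is careful bookkeeping rather than analysis. One must confirm that the candidate states used in the limiter at each interface are exactly the ones appearing in this decomposition, with the correct base grid value and the correct scaling $2\Lambda_x$ rather than $2\lambda_x$. One must also confirm that the thresholds $\tilde\varepsilon_D$ and $\tilde\varepsilon_q$ are admissible choices. That second point relies on Lemma~\ref{lem:llf_positivity}, which is where $\mathrm{CFL}\le\tfrac12$ enters: it makes $\mu\le1$, so that $\con^{\pm,L}\in\mathcal{G}$ and hence $\theta_D,\theta_Q\in[0,1]$ are well defined.

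I would also note that Step~2 is not needed for admissibility. Admissibility follows from the flux limiter alone, since the final fluxes are blended toward the grid-point LLF flux. The state limiter only ensures that $\hat{\fluxx}^{\mathrm{AWENO}}$ is computable, because the wave speeds and the primitive recovery must be evaluated on admissible states.
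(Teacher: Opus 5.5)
Your proof is correct and is essentially the paper's argument: your two-level split (first by $\beta_x,\beta_y$, then into halves) reduces to the paper's convex combination, with weights $\beta_x/2,\beta_x/2,\beta_y/2,\beta_y/2$, of the four one-sided states $\con_{i,j}^n \mp 2\Lambda_x\hat{\fluxx}^{\mathrm{PCP}}_{i\pm\frac12,j}$ and $\con_{i,j}^n \mp 2\Lambda_y\hat{\fluxy}^{\mathrm{PCP}}_{i,j\pm\frac12}$; each state is admissible by Lemma~\ref{lem:stage2}, and Lemma~\ref{lem:convexity} then concludes. Your side remark is also accurate: the paper's proof never uses the state limiter of Step~2, whose role is only to make the high-order flux computable.
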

	
	\begin{proof}
		Using the relations $\lambda_x = \beta_x \Lambda_x$ and $\lambda_y = \beta_y \Lambda_y$, we define the four one-sided forward-Euler states for grid point $(i,j)$:
		\begin{align*}
			\con^{+,\mathrm{PCP}}_x
			&:= \con_{i,j}^n - 2\Lambda_x\,\hat{\fluxx}^{\mathrm{PCP}}_{i+\frac{1}{2},j},
			&
			\con^{-,\mathrm{PCP}}_x
			&:= \con_{i,j}^n + 2\Lambda_x\,\hat{\fluxx}^{\mathrm{PCP}}_{i-\frac{1}{2},j},\\
			\con^{+,\mathrm{PCP}}_y
			&:= \con_{i,j}^n - 2\Lambda_y\,\hat{\fluxy}^{\mathrm{PCP}}_{i,j+\frac{1}{2}},
			&
			\con^{-,\mathrm{PCP}}_y
			&:= \con_{i,j}^n + 2\Lambda_y\,\hat{\fluxy}^{\mathrm{PCP}}_{i,j-\frac{1}{2}}.
		\end{align*}
		By Lemma~\ref{lem:stage2}, all four states belong to $\mathcal{G}$. Substituting these definitions into the conservative update~\eqref{eq:update} yields:
		\begin{align*}
			\con_{i,j}^{n+1}
			&= \con_{i,j}^n
			- \beta_x \Lambda_x \left( \hat{\fluxx}^{\mathrm{PCP}}_{i+\frac{1}{2},j} - \hat{\fluxx}^{\mathrm{PCP}}_{i-\frac{1}{2},j} \right)
			- \beta_y \Lambda_y \left( \hat{\fluxy}^{\mathrm{PCP}}_{i,j+\frac{1}{2}} - \hat{\fluxy}^{\mathrm{PCP}}_{i,j-\frac{1}{2}} \right) \\
			&= \con_{i,j}^n
			- \frac{\beta_x}{2} \left( 2\Lambda_x\,\hat{\fluxx}^{\mathrm{PCP}}_{i+\frac{1}{2},j} \right)
			+ \frac{\beta_x}{2} \left( 2\Lambda_x\,\hat{\fluxx}^{\mathrm{PCP}}_{i-\frac{1}{2},j} \right)
			- \frac{\beta_y}{2} \left( 2\Lambda_y\,\hat{\fluxy}^{\mathrm{PCP}}_{i,j+\frac{1}{2}} \right)
			+ \frac{\beta_y}{2} \left( 2\Lambda_y\,\hat{\fluxy}^{\mathrm{PCP}}_{i,j-\frac{1}{2}} \right).
		\end{align*}
		By grouping terms, this can be rewritten as the convex combination
		\begin{equation}
			\label{eq:convex_update}
			\con_{i,j}^{n+1}
			=
			\frac{\beta_x}{2}\,\con^{+,\mathrm{PCP}}_x
			+ \frac{\beta_x}{2}\,\con^{-,\mathrm{PCP}}_x
			+ \frac{\beta_y}{2}\,\con^{+,\mathrm{PCP}}_y
			+ \frac{\beta_y}{2}\,\con^{-,\mathrm{PCP}}_y.
		\end{equation}
		The coefficients $\beta_x/2$, $\beta_x/2$, $\beta_y/2$, $\beta_y/2$ are
		non-negative and sum to $\beta_x + \beta_y = 1$. Since all four states belong to $\mathcal{G}$,
		Lemma~\ref{lem:convexity} implies $\con_{i,j}^{n+1} \in \mathcal{G}$.
	\end{proof}
	
	\section{Numerical Results}\label{sec:numerics}
	
	In the previous sections, we developed PCP AFD-WENO schemes for the one- and two-dimensional RHD equations and established that the proposed schemes preserve the admissible set $\mathcal{G}$ under the CFL condition~\eqref{eq:cfl}. In this section, we present numerical results to validate the theoretical analysis and demonstrate the accuracy and robustness of the proposed PCP AFD-WENO schemes. We also assess the performance of the proposed \wenoaoi interpolation through comparisons with the \wenojs, \wenoz, and \wenoao interpolations.
	
	The PCP property established in the previous sections is based on the forward Euler time discretization. To obtain a high-order fully discrete scheme while retaining this property, we employ the third-order strong stability preserving (SSP) Runge--Kutta method. Since this method can be expressed as a convex combination of forward Euler steps, the PCP property can be preserved by applying the PCP algorithm at each Runge--Kutta stage. Writing the semi-discrete scheme~\eqref{eq:1d_scheme} or~\eqref{semi_discrete_scheme_2D} in the form
	\begin{equation*}
		\frac{d\con}{dt} = \mathcal{L}(\con),
	\end{equation*}
	the third-order SSP Runge--Kutta method is given by
	\begin{align*}
		\con^{(1)} &= \con^n + \Delta t\,\mathcal{L}(\con^n), \\
		\con^{(2)} &= \frac{3}{4}\,\con^n + \frac{1}{4}\,\con^{(1)} + \frac{1}{4}\,\Delta t\,\mathcal{L}(\con^{(1)}), \\
		\con^{n+1} &= \frac{1}{3}\,\con^n + \frac{2}{3}\,\con^{(2)} + \frac{2}{3}\,\Delta t\,\mathcal{L}(\con^{(2)}).
	\end{align*}
	To keep the CFL number well within the required bound $0 < \text{CFL} < \frac{1}{2}$, it is set to $0.4$ for all the test problems.
	
	We use the following abbreviations for the different schemes in presenting the numerical results:
	\begin{itemize}
		\item \wenojs: AFD-WENO scheme with the PCP algorithm and WENO-JS5 interpolation defined in Section~\ref{sec:wenojs}.
		\item \wenoz: AFD-WENO scheme with the PCP algorithm and WENO-Z5 interpolation defined in Section~\ref{sec:wenoz}.
		\item \wenoao: AFD-WENO scheme with the PCP algorithm and WENO-AO(5,3) interpolation defined in Section~\ref{sec:wenoao}.
		\item \wenoaoi: AFD-WENO scheme with the PCP algorithm and WENO-AOI interpolation defined in Section~\ref{sec:wenoaoi}.
	\end{itemize}
	
	The parameters for the \wenoao and \wenoaoi interpolations are taken as $\gamma_0^5 = 0.85$ and $\gamma_0^3 = \gamma_1^3 = \gamma_2^3 = 0.05$. The value of $\varepsilon$ in the \wenojs and \wenoz interpolations is set to $10^{-6}$, whereas that in the \wenoao and \wenoaoi interpolations is set to $10^{-12}$.
		
	\subsection{One dimensional test problems}
	In this subsection, we present a series of one-dimensional test problems to examine the accuracy of the proposed schemes for smooth solutions, their resolution of discontinuities, and their ability to preserve the PCP property under challenging flow conditions. We first verify the accuracy of the proposed schemes for smooth solutions.
	
	\begin{testproblem}{Accuracy Tests\label{tp:st}}
		One-dimensional smooth advection problems are used to verify the spatial accuracy of the proposed numerical scheme with periodic boundary conditions. The following two test cases are considered.
		
		\noindent\textbf{Case I.} The computational domain is $[0,1]$. The exact solution is given by
		\begin{equation*}
			\rho(x,t)=2+\sin\!\left(2\pi(x-0.5t)\right),\quad
			v_x(x,t)=0.5,\quad
			p(x,t)=1,
		\end{equation*}
		where the initial condition is obtained by setting $t=0$. The solution is evolved until $t=2.0$.
		
		\noindent\textbf{Case II.} The computational domain is $[0,2\pi]$. The exact solution is given by
		\begin{equation*}
			\rho(x,t)=1+0.99999\sin(x-0.99t),\quad
			v_x(x,t)=0.99,\quad
			p(x,t)=0.005,
		\end{equation*}
		where the corresponding initial condition is obtained by setting $t=0$. The solution is evolved up to the final time $t=0.01$.
		
		Case-I is a standard test problem commonly used to assess the accuracy and convergence properties of numerical schemes. In contrast, Case-II represents a low-density and low-pressure test problem proposed in \cite{wu2017physical}, which is particularly useful for assessing the robustness of numerical schemes under challenging physical conditions. The convergence results for Case-I and Case-II are presented in Tables~\ref{tab:accuracy_test_case1} and~\ref{tab:accuracy_test_case2}, respectively.
		In both cases, all the considered schemes converge to the exact solution with the expected convergence rates. WENO-Z, WENO-AO, and WENO-AOI consistently achieve higher accuracy than WENO-JS. In particular, WENO-AOI provides the smallest $L_{\infty}$ errors on coarse grids as compared to WENO-Z, WENO-AO schemes.

	\begin{table*}[ht!]
		\centering
		\caption{Comparison of WENO-JS, WENO-Z, WENO-AO, and WENO-AOI for Example~\ref{tp:st}, Case-I, in terms of the $L_1$ and $L_{\infty}$ errors and the corresponding convergence rates.
		}
		\label{tab:accuracy_test_case1}
		
		\renewcommand{\arraystretch}{1.15}
		
		\begin{tabular}{ccccccccc}
			\toprule
			&
			\multicolumn{4}{c}{WENO-JS} &
			\multicolumn{4}{c}{WENO-Z} \\
			\cmidrule(lr){2-5}\cmidrule(lr){6-9}
			$N$
			& $L_1$ Error & Order
			& $L^\infty$ Error & Order
			& $L_1$ Error & Order
			& $L^\infty$ Error & Order \\
			\midrule
			
			8
			& $8.2531\times10^{-2}$ & --
			& $1.2346\times10^{-1}$ & --
			& $2.3862\times10^{-2}$ & --
			& $3.7186\times10^{-2}$ & -- \\
			
			24
			& $7.5593\times10^{-4}$ & 4.272
			& $1.3554\times10^{-3}$ & 4.107
			& $9.6713\times10^{-5}$ & 5.014
			& $1.5821\times10^{-4}$ & 4.970 \\
			
			40
			& $5.8521\times10^{-5}$ & 5.009
			& $1.1310\times10^{-4}$ & 4.862
			& $7.5905\times10^{-6}$ & 4.982
			& $1.2400\times10^{-5}$ & 4.985 \\
			
			56
			& $1.0879\times10^{-5}$ & 5.001
			& $2.1965\times10^{-5}$ & 4.871
			& $1.4150\times10^{-6}$ & 4.992
			& $2.3111\times10^{-6}$ & 4.993 \\
			
			72
			& $3.0963\times10^{-6}$ & 5.000
			& $6.2992\times10^{-6}$ & 4.970
			& $4.0312\times10^{-7}$ & 4.996
			& $6.5848\times10^{-7}$ & 4.996 \\
			
		\end{tabular}
		
		\vspace{0.4cm}
		
		\begin{tabular}{ccccccccc}
			\toprule
			&
			\multicolumn{4}{c}{WENOAO} &
			\multicolumn{4}{c}{WENOAOI} \\
			\cmidrule(lr){2-5}\cmidrule(lr){6-9}
			$N$
			& $L_1$ Error & Order
			& $L^\infty$ Error & Order
			& $L_1$ Error & Order
			& $L^\infty$ Error & Order \\
			\midrule
			
			8
			& $2.3359\times10^{-2}$ & --
			& $3.4154\times10^{-2}$ & --
			& $2.1775\times10^{-2}$ & --
			& $3.2192\times10^{-2}$ & -- \\
			
			24
			& $9.7021\times10^{-5}$ & 4.992
			& $1.5756\times10^{-4}$ & 4.896
			& $9.6791\times10^{-5}$ & 4.930
			& $1.5763\times10^{-4}$ & 4.842 \\
			
			40
			& $7.5944\times10^{-6}$ & 4.987
			& $1.2392\times10^{-5}$ & 4.978
			& $7.5923\times10^{-6}$ & 4.983
			& $1.2392\times10^{-5}$ & 4.979 \\
			
			56
			& $1.4152\times10^{-6}$ & 4.993
			& $2.3106\times10^{-6}$ & 4.992
			& $1.4151\times10^{-6}$ & 4.993
			& $2.3106\times10^{-6}$ & 4.992 \\
			
			72
			& $4.0314\times10^{-7}$ & 4.997
			& $6.5842\times10^{-7}$ & 4.995
			& $4.0313\times10^{-7}$ & 4.996
			& $6.5842\times10^{-7}$ & 4.995 \\
			
			\bottomrule
		\end{tabular}
	\end{table*}
	\begin{table*}[ht!]
		\centering
		\caption{Comparison of WENO-JS, WENO-Z, WENO-AO, and WENO-AOI for Example~\ref{tp:st}, Case-II, in terms of the $L_1$ and $L_{\infty}$ errors and the corresponding convergence rates.}
		\label{tab:accuracy_test_case2}
		
		\renewcommand{\arraystretch}{1.15}
		
		\begin{tabular}{ccccccccc}
			\toprule
			&
			\multicolumn{4}{c}{WENO-JS} &
			\multicolumn{4}{c}{WENO-Z} \\
			\cmidrule(lr){2-5}\cmidrule(lr){6-9}
			$N$
			& $L_1$ Error & Order
			& $L^\infty$ Error & Order
			& $L_1$ Error & Order
			& $L^\infty$ Error & Order \\
			\midrule
			
			8  & $1.8947\times10^{-4}$ & --    & $3.2892\times10^{-4}$ & --    &
			$2.9198\times10^{-5}$ & --    & $6.0073\times10^{-5}$ & --    \\
			
			24 & $9.1389\times10^{-7}$ & 4.855 & $1.5166\times10^{-6}$ & 4.896 &
			$9.0715\times10^{-8}$ & 5.256 & $1.4529\times10^{-7}$ & 5.484 \\
			
			40 & $6.4967\times10^{-8}$ & 5.176 & $1.1834\times10^{-7}$ & 4.993 &
			$7.3127\times10^{-9}$ & 4.929 & $1.1769\times10^{-8}$ & 4.920 \\
			
			56 & $1.1586\times10^{-8}$ & 5.124 & $2.2055\times10^{-8}$ & 4.993 &
			$1.4233\times10^{-9}$ & 4.864 & $2.4882\times10^{-9}$ & 4.618 \\
			
			72 & $3.2138\times10^{-9}$ & 5.102 & $6.2855\times10^{-9}$ & 4.995 &
			$3.9918\times10^{-10}$ & 5.059 & $7.1690\times10^{-10}$ & 4.951 \\
			
		\end{tabular}
		
		\vspace{0.4cm}
		
		\begin{tabular}{ccccccccc}
			\toprule
			&
			\multicolumn{4}{c}{WENOAO} &
			\multicolumn{4}{c}{WENOAOI} \\
			\cmidrule(lr){2-5}\cmidrule(lr){6-9}
			$N$
			& $L_1$ Error & Order
			& $L^\infty$ Error & Order
			& $L_1$ Error & Order
			& $L^\infty$ Error & Order \\
			\midrule
			
			8  & $2.4857\times10^{-5}$ & --    & $6.1101\times10^{-5}$ & --    &
			$2.3364\times10^{-5}$ & --    & $5.8126\times10^{-5}$ & --    \\
			
			24 & $9.1035\times10^{-8}$ & 5.106 & $1.4176\times10^{-7}$ & 5.522 &
			$9.0835\times10^{-8}$ & 5.052 & $1.4178\times10^{-7}$ & 5.476 \\
			
			40 & $7.3163\times10^{-9}$ & 4.935 & $1.1713\times10^{-8}$ & 4.881 &
			$7.3144\times10^{-9}$ & 4.932 & $1.1719\times10^{-8}$ & 4.880 \\
			
			56 & $1.4235\times10^{-9}$ & 4.865 & $2.4853\times10^{-9}$ & 4.608 &
			$1.4234\times10^{-9}$ & 4.865 & $2.4855\times10^{-9}$ & 4.609 \\
			
			72 & $3.9918\times10^{-10}$ & 5.059 & $7.1658\times10^{-10}$ & 4.949 &
			$3.9920\times10^{-10}$ & 5.059 & $7.1660\times10^{-10}$ & 4.949 \\
			
			\bottomrule
		\end{tabular}
	\end{table*}	
	\end{testproblem}
	\begin{figure}[ht!]
		\centering
		\begin{subfigure}{0.48\textwidth}
			\centering
			\includegraphics[width=\textwidth]{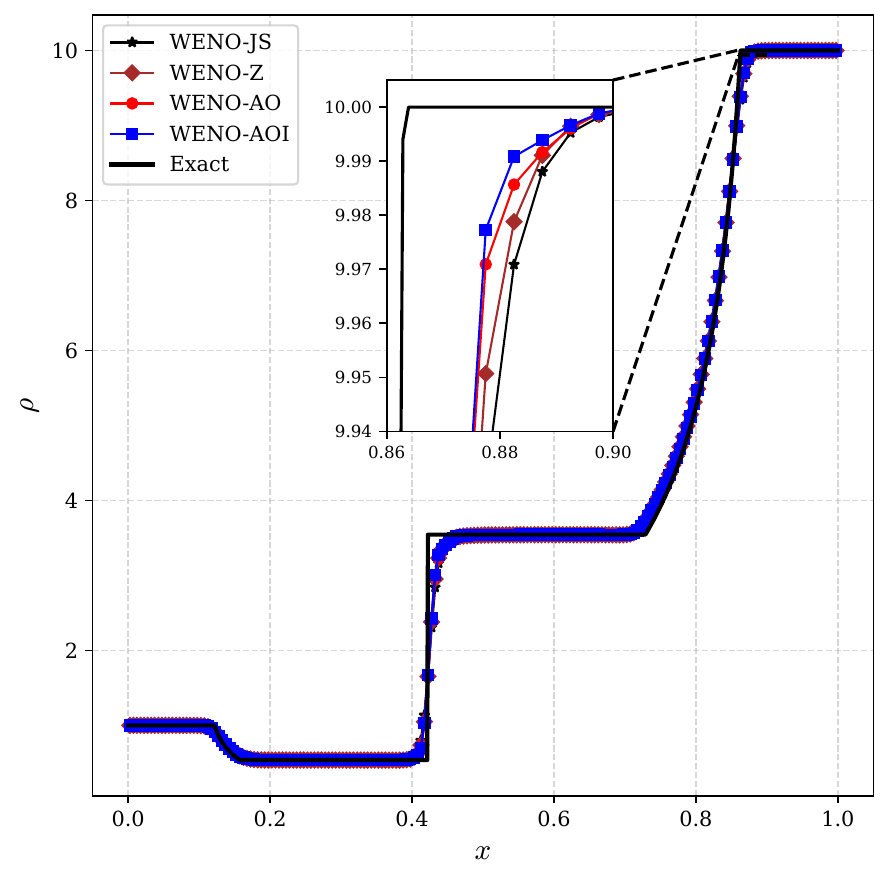}
			\caption{\id}
		\end{subfigure}
		\hfill
		\begin{subfigure}{0.48\textwidth}
			\centering
			\includegraphics[width=\textwidth]{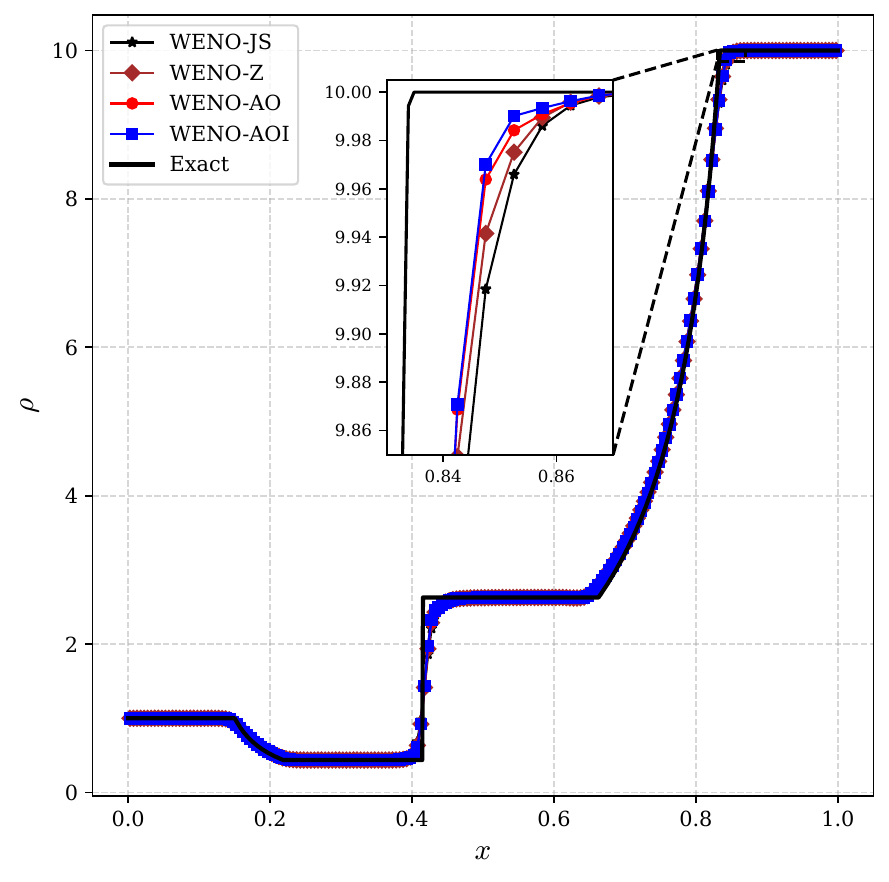}
			\caption{\ip}
		\end{subfigure}
		\\[6pt]
		\begin{subfigure}{0.48\textwidth}
			\centering
			\includegraphics[width=\textwidth]{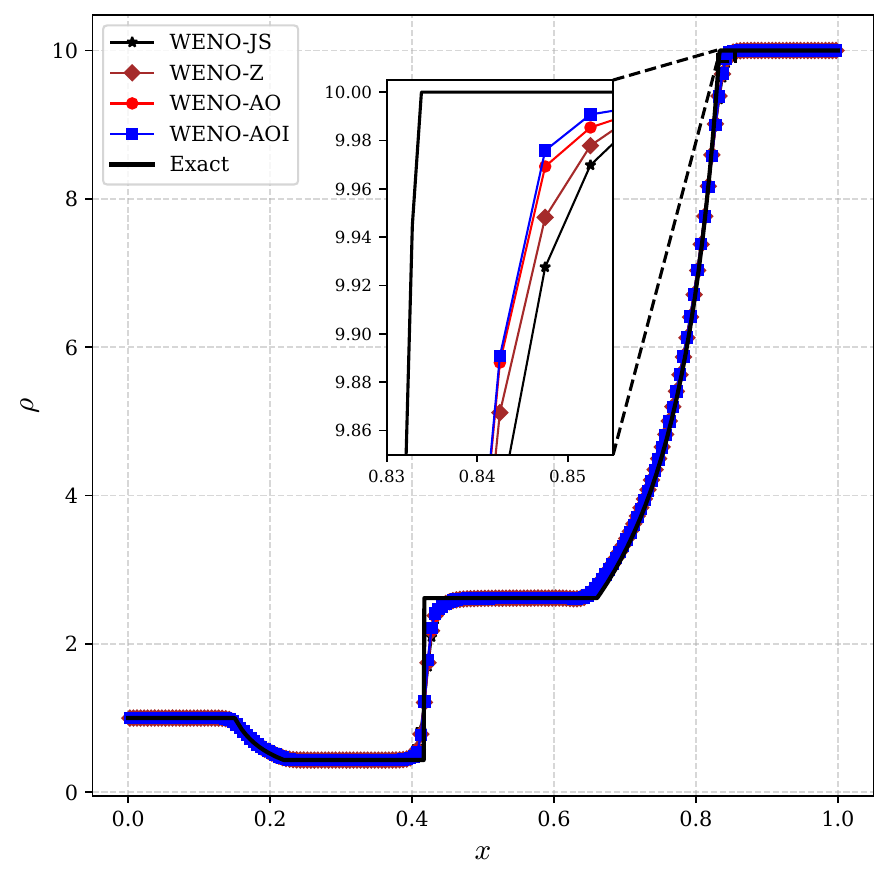}
			\caption{\rc}
		\end{subfigure}
		\hfill
		\begin{subfigure}{0.48\textwidth}
			\centering
			\includegraphics[width=\textwidth]{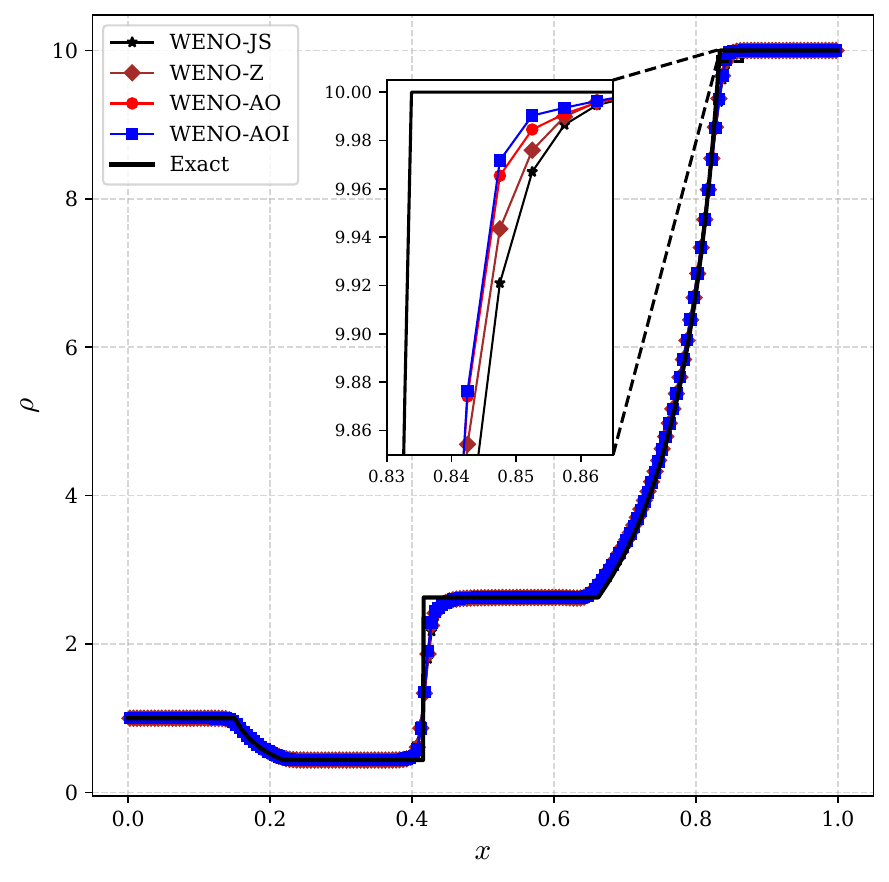}
			\caption{\tm}
		\end{subfigure}
		\caption{Numerical solutions for Test Problem~\ref{tp:rp1} at $T=0.4$ on a $200$-point mesh.}
		\label{fig:rp1_200}
	\end{figure}
	\begin{figure}[ht!]
		\centering
		\begin{subfigure}{0.48\textwidth}
			\centering
			\includegraphics[width=\textwidth]{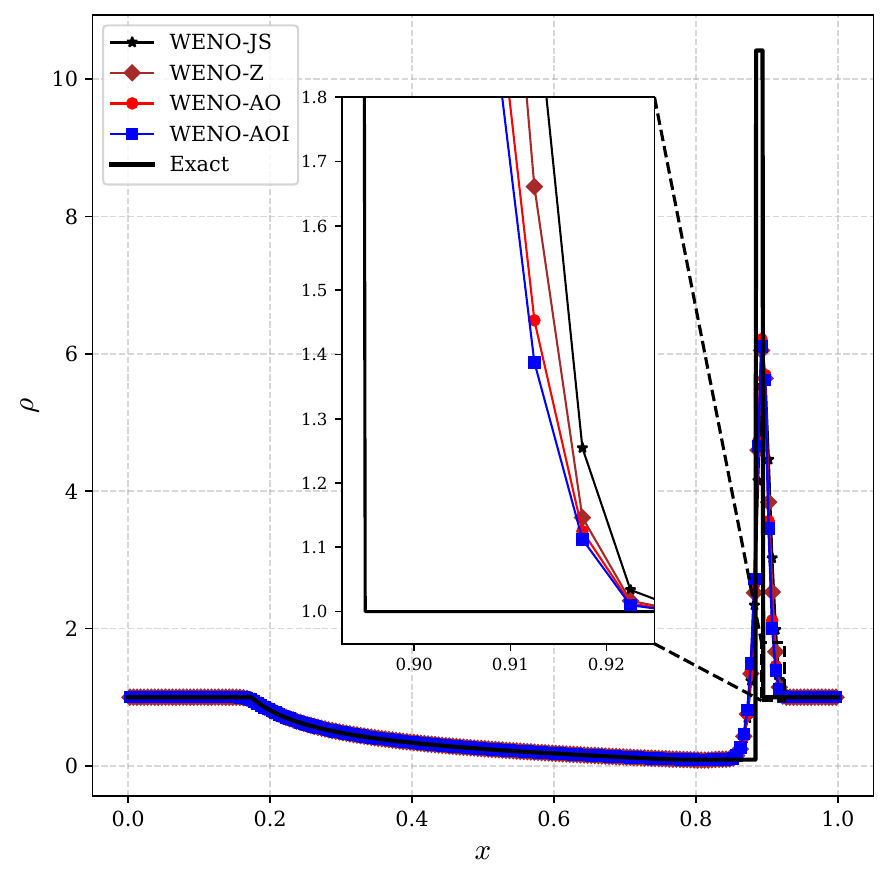}
			\caption{\id}
		\end{subfigure}
		\hfill
		\begin{subfigure}{0.48\textwidth}
			\centering
			\includegraphics[width=\textwidth]{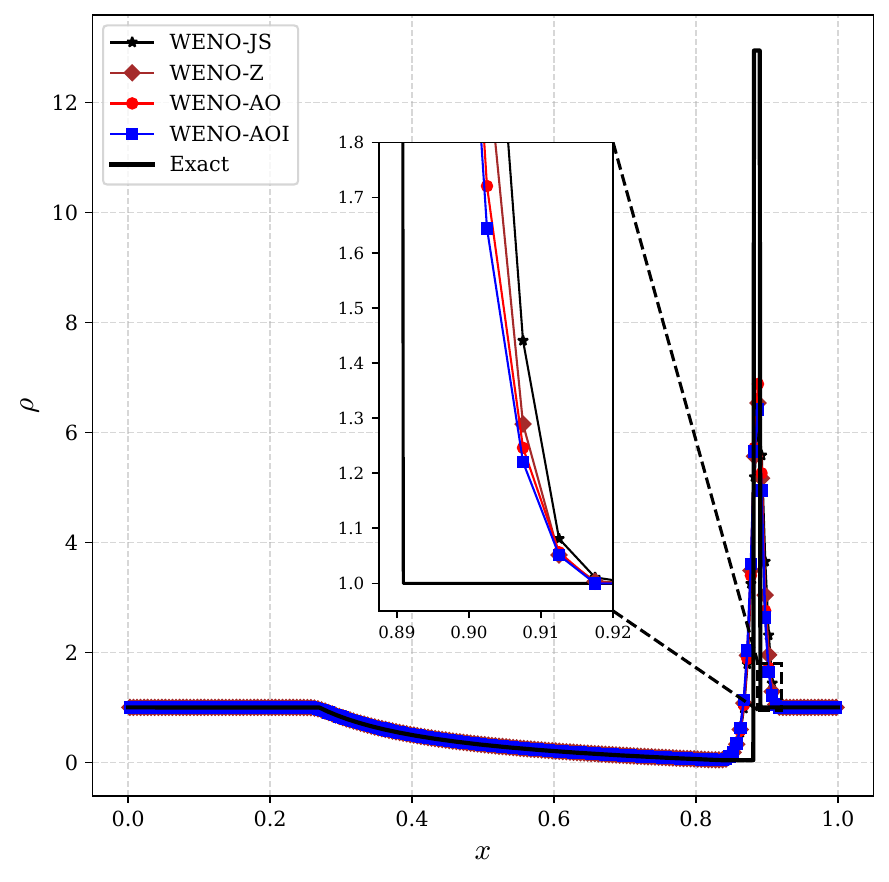}
			\caption{\ip}
		\end{subfigure}
		\\[6pt]
		\begin{subfigure}{0.48\textwidth}
			\centering
			\includegraphics[width=\textwidth]{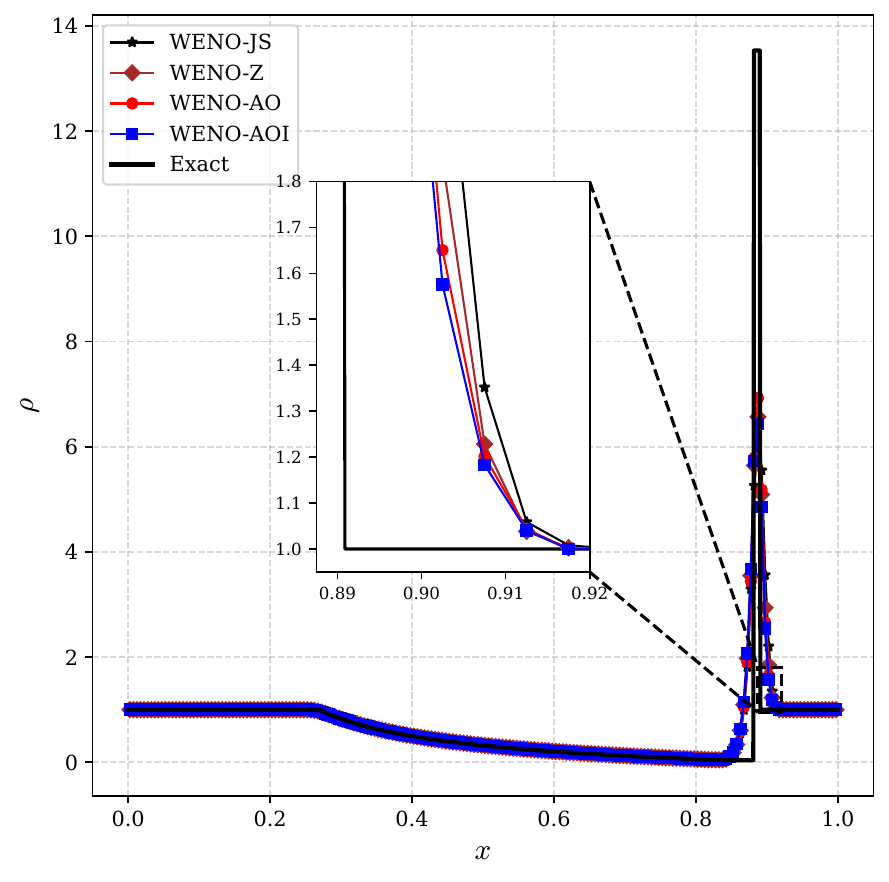}
			\caption{\rc}
		\end{subfigure}
		\hfill
		\begin{subfigure}{0.48\textwidth}
			\centering
			\includegraphics[width=\textwidth]{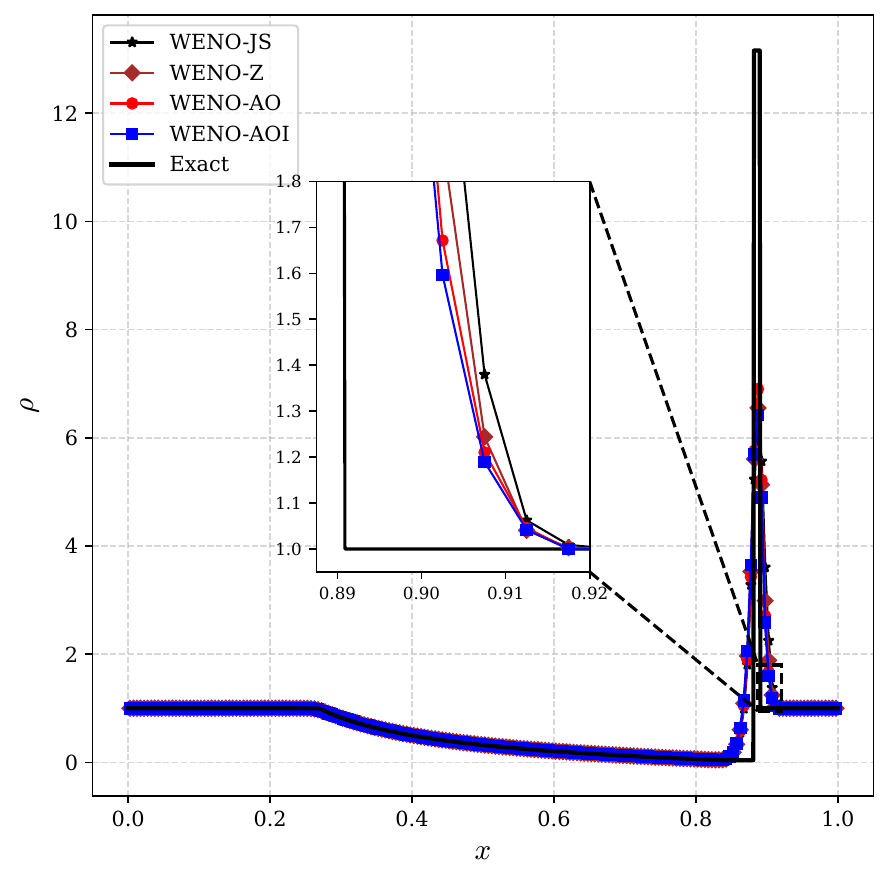}
			\caption{\tm}
		\end{subfigure}
		\caption{Numerical solutions for Test Problem~\ref{tp:rp2} at $T=0.4$ on a $200$-point mesh.}
		\label{fig:rp2_200}
	\end{figure}
	\begin{figure}[ht!]
		\centering
		\begin{subfigure}{0.48\textwidth}
			\centering
			\includegraphics[width=\textwidth]{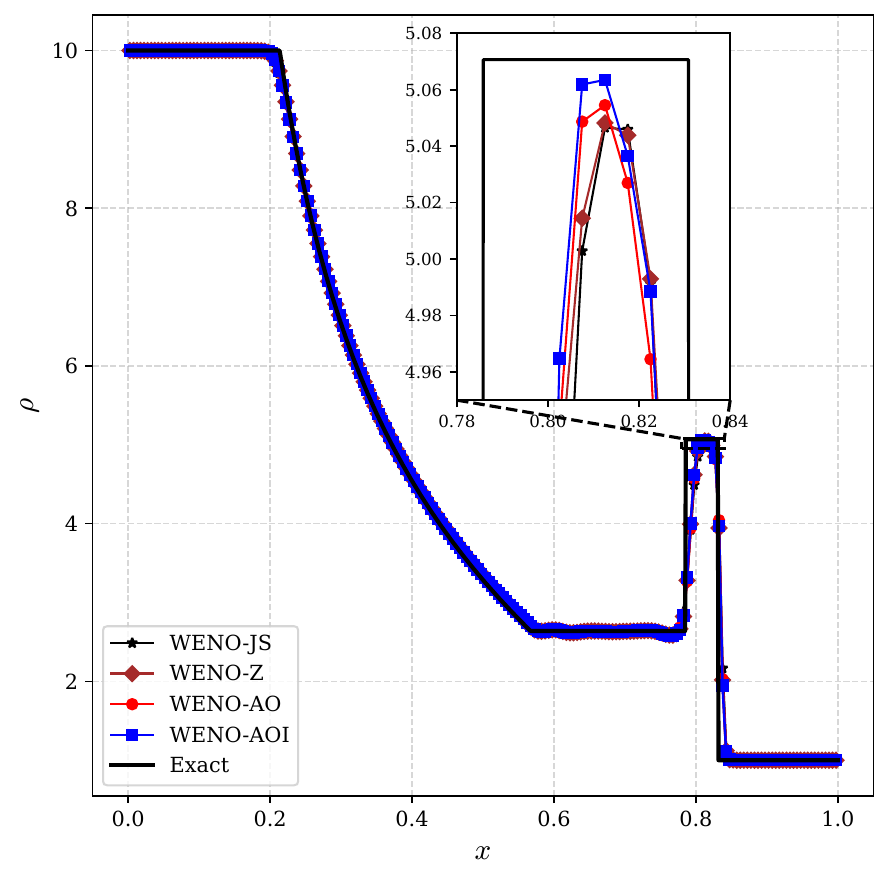}
			\caption{\id}
		\end{subfigure}
		\hfill
		\begin{subfigure}{0.48\textwidth}
			\centering
			\includegraphics[width=\textwidth]{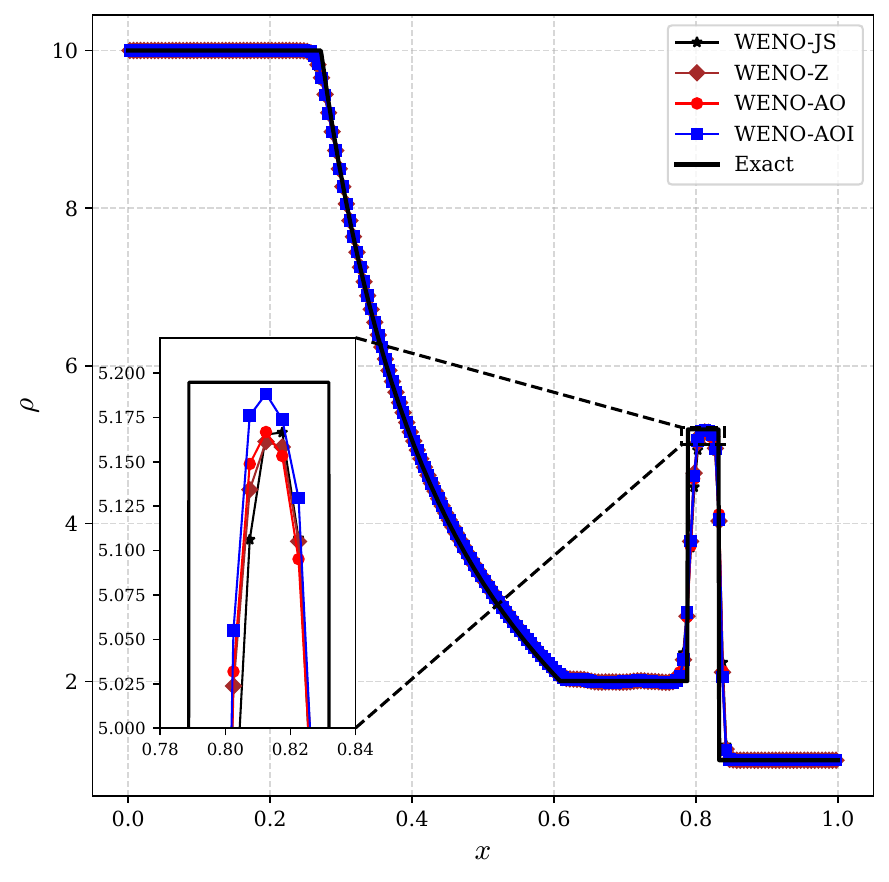}
			\caption{\ip}
		\end{subfigure}
		\\[6pt]
		\begin{subfigure}{0.48\textwidth}
			\centering
			\includegraphics[width=\textwidth]{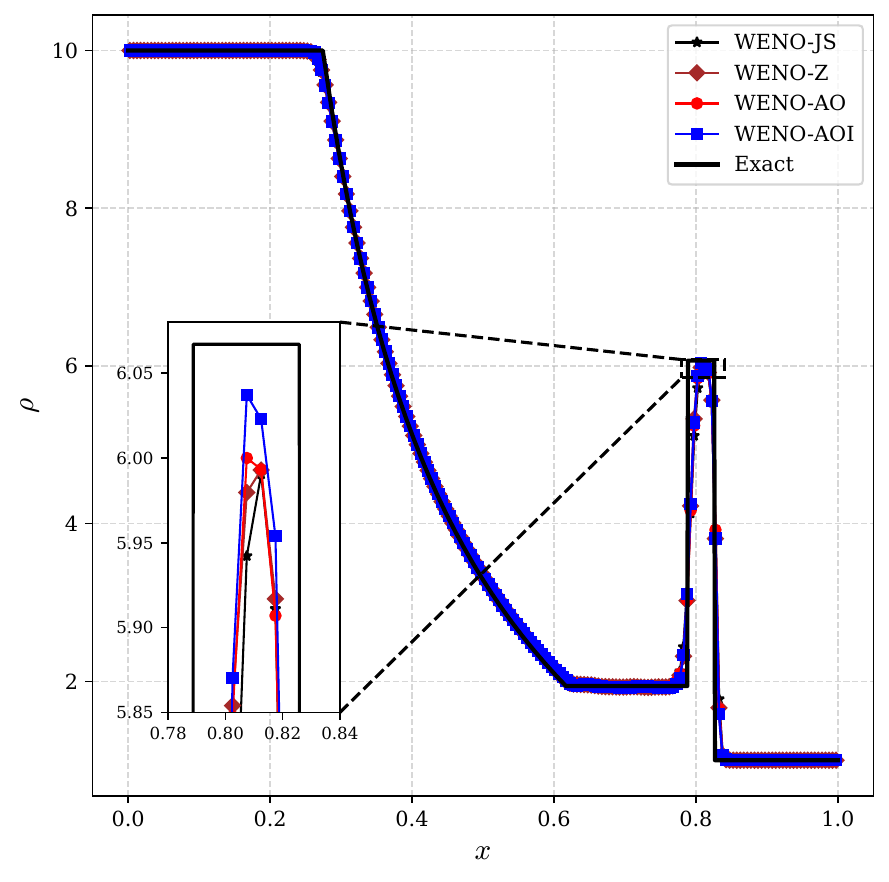}
			\caption{\rc}
		\end{subfigure}
		\hfill
		\begin{subfigure}{0.48\textwidth}
			\centering
			\includegraphics[width=\textwidth]{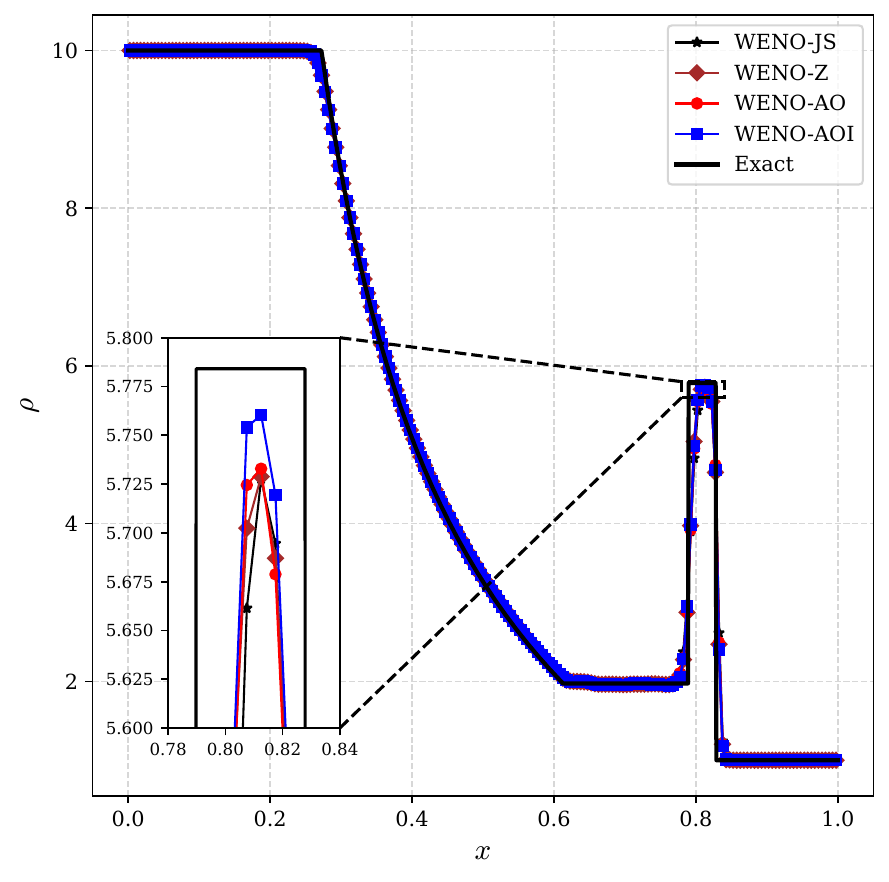}
			\caption{\tm}
		\end{subfigure}
		\caption{Numerical solutions for Test Problem~\ref{tp:rp3} at $T=0.4$ on a $200$-point mesh.}
		\label{fig:rp3}
	\end{figure}
	\begin{figure}[ht!]
		\centering
		\begin{subfigure}{0.48\textwidth}
			\centering
			\includegraphics[width=\textwidth]{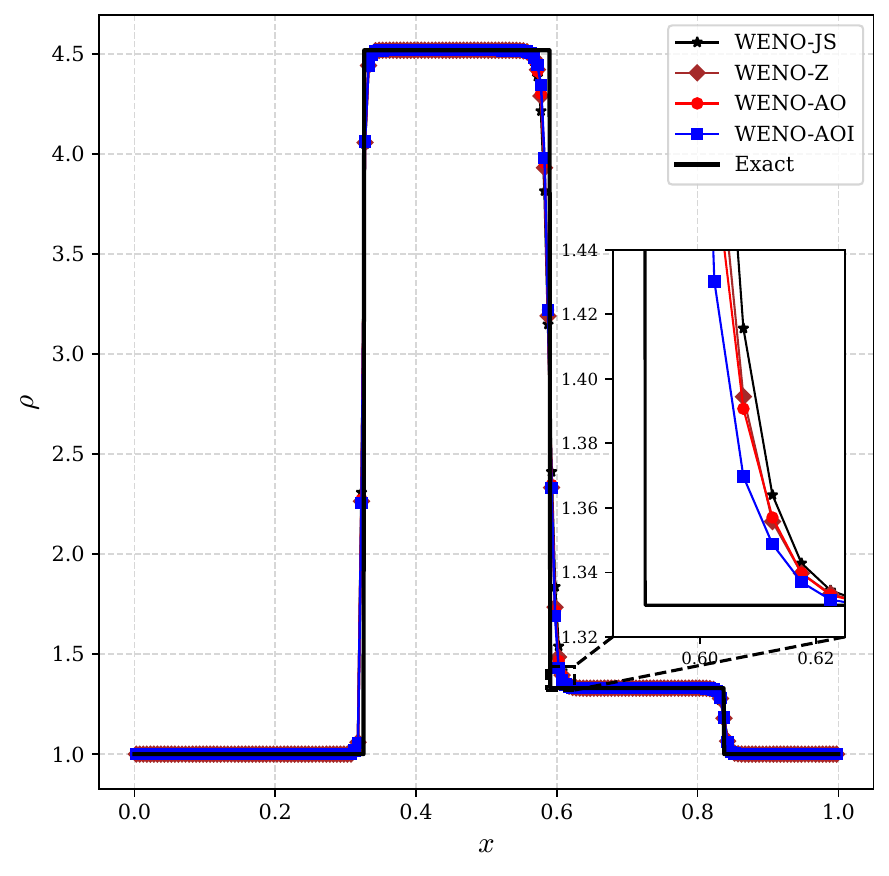}
			\caption{\id}
		\end{subfigure}
		\hfill
		\begin{subfigure}{0.48\textwidth}
			\centering
			\includegraphics[width=\textwidth]{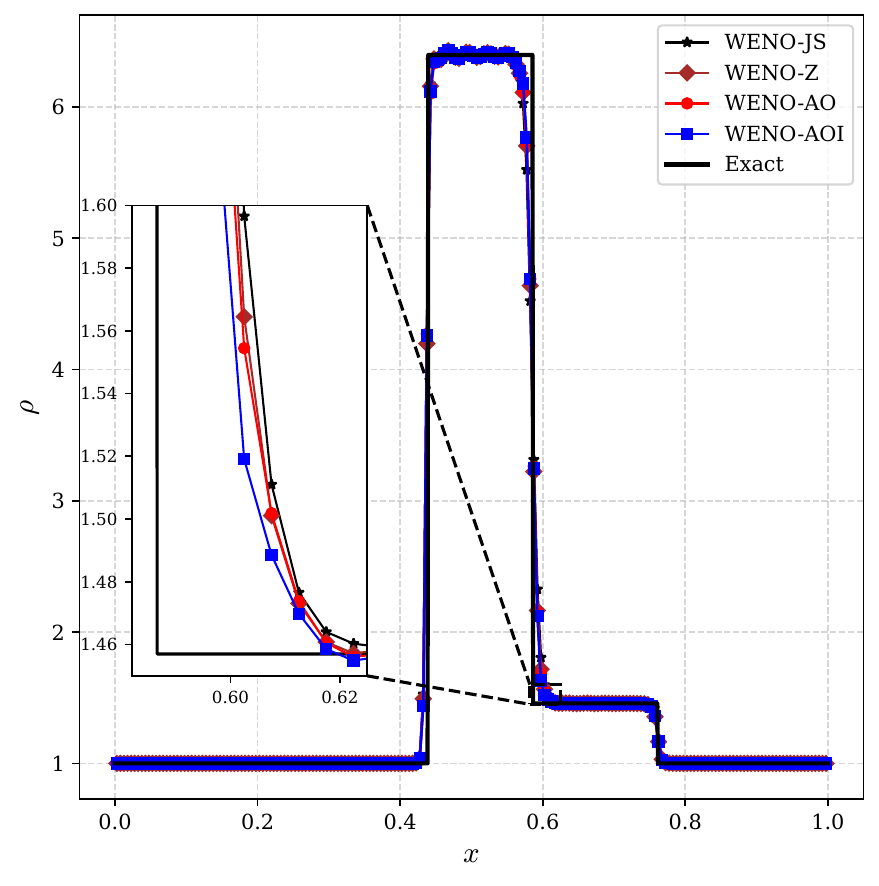}
			\caption{\ip}
		\end{subfigure}
		\\[6pt]
		\begin{subfigure}{0.48\textwidth}
			\centering
			\includegraphics[width=\textwidth]{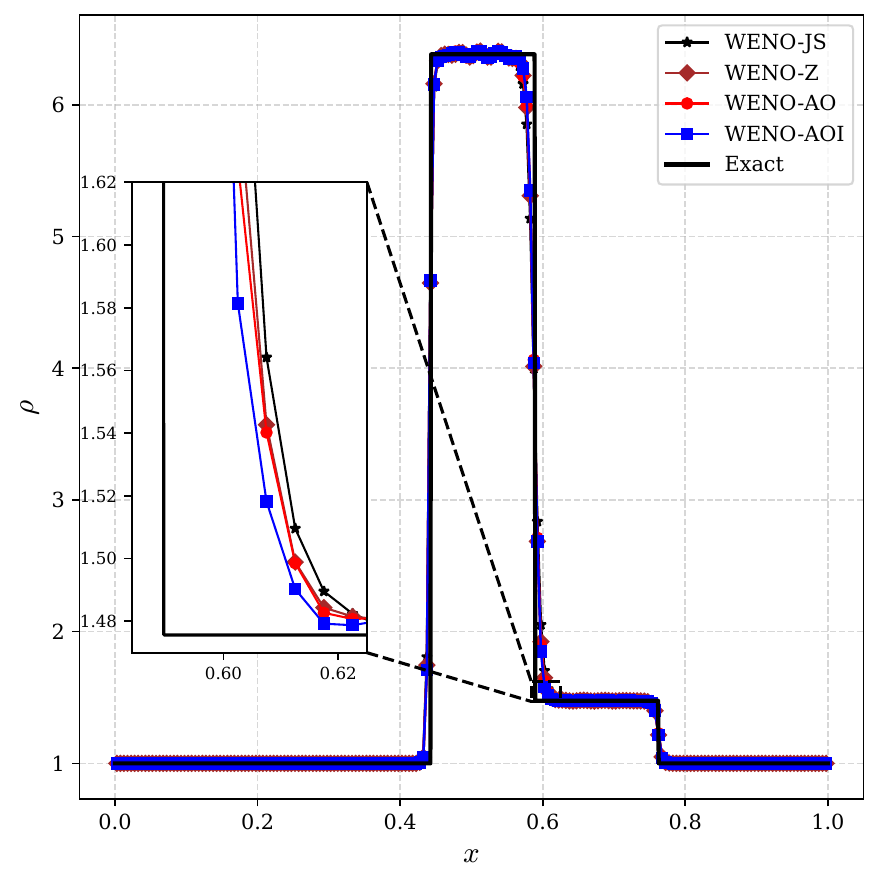}
			\caption{\rc}
		\end{subfigure}
		\hfill
		\begin{subfigure}{0.48\textwidth}
			\centering
			\includegraphics[width=\textwidth]{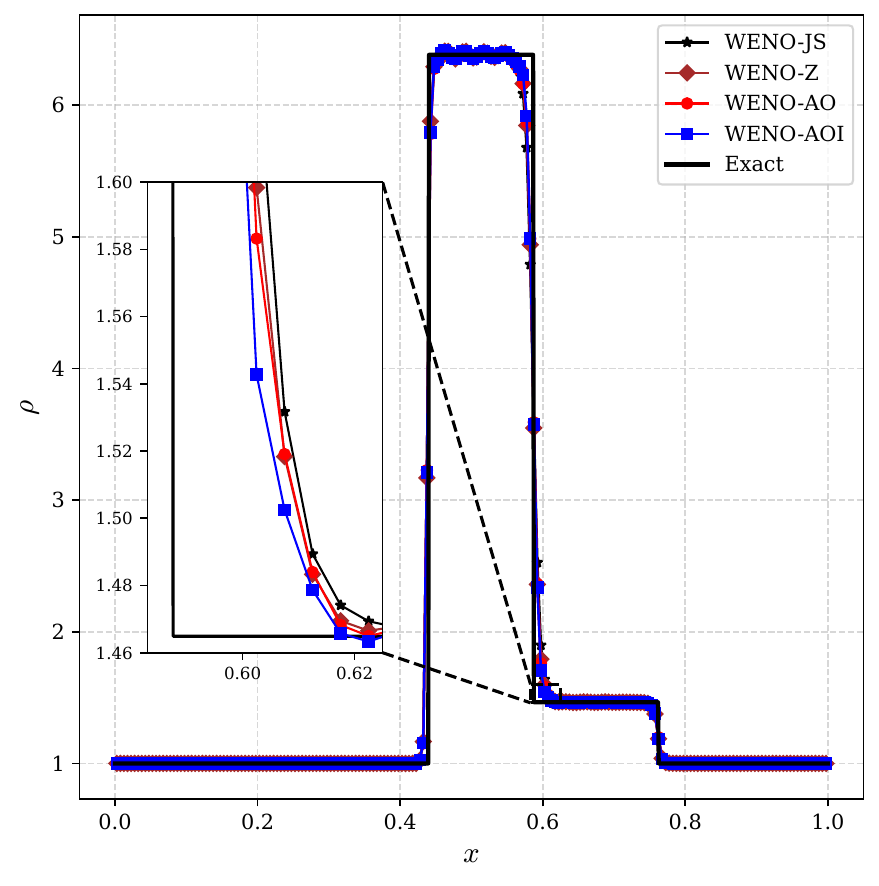}
			\caption{\tm}
		\end{subfigure}
		\caption{Numerical solutions for Test Problem~\ref{tp:rp4} at $T=0.4$ on a $200$-point mesh.}
		\label{fig:rp4}
	\end{figure}
	\begin{testproblem}{Riemann Problem 1}
		\label{tp:rp1}
		In this test, we consider a Riemann problem discussed in \cite{mignone2005hllc}. The computational domain is $[0,1]$ and an initial discontinuity is placed at $x=0.5$. The initial states are given by
		\begin{equation*}
			\left(\rho,\,u,\,p\right) =
			\begin{cases}
				\left(1,-0.6,10\right), & \text{if } x < 0.5, \\
				\left(10,0.5,20\right), & \text{if } x > 0.5.
			\end{cases}
		\end{equation*}
		The exact solution consists of two oppositely propagating rarefaction waves separated by a contact discontinuity. Outflow boundary conditions are imposed at both ends of the computational domain. The numerical results at $t=0.4$ are presented in Figure~\ref{fig:rp1_200}.
		
		The numerical solutions obtained using the \wenojs, \wenoz, \wenoao, and \wenoaoi schemes successfully capture the key features of the solution for all four equations of state (EOS). In comparison with \wenojs and \wenoz, the \wenoao and \wenoaoi schemes provide sharper resolution of the solution features. The enlarged view near the peak of the rarefaction wave further shows that \wenoaoi achieves a slightly sharper resolution than \wenojs, \wenoz, and \wenoao
	\end{testproblem}
	\begin{figure}[ht]
		\centering
		\begin{subfigure}{0.48\textwidth}
			\centering
			\includegraphics[width=\textwidth]{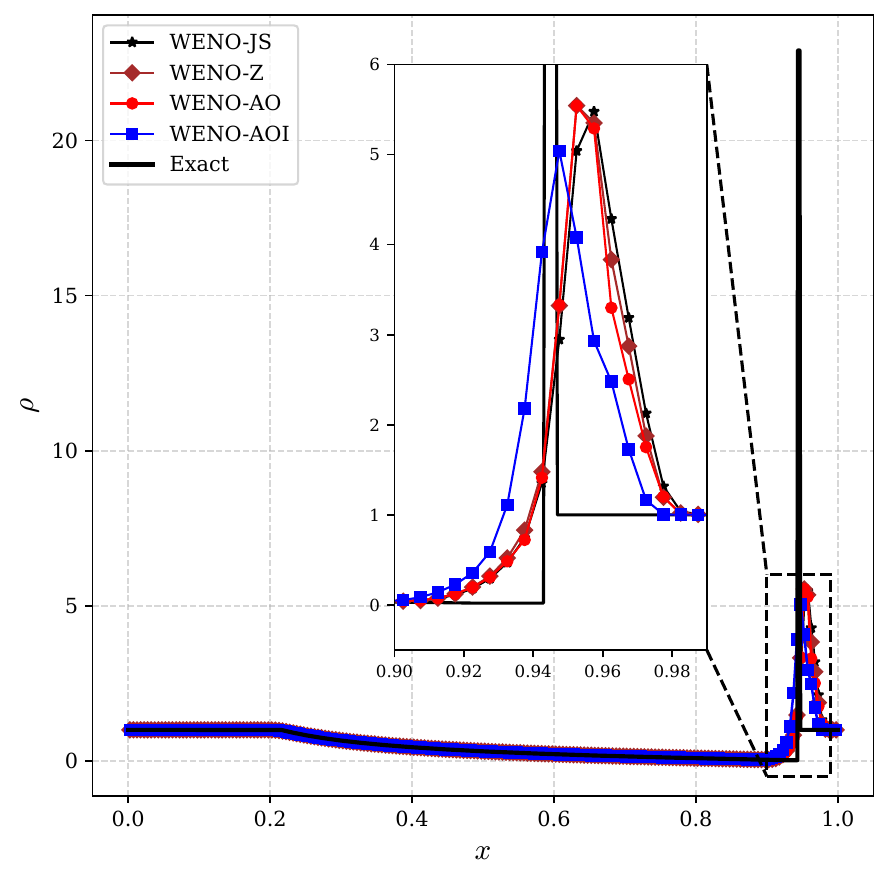}
			\caption{\id}
		\end{subfigure}
		\hfill
		\begin{subfigure}{0.48\textwidth}
			\centering
			\includegraphics[width=\textwidth]{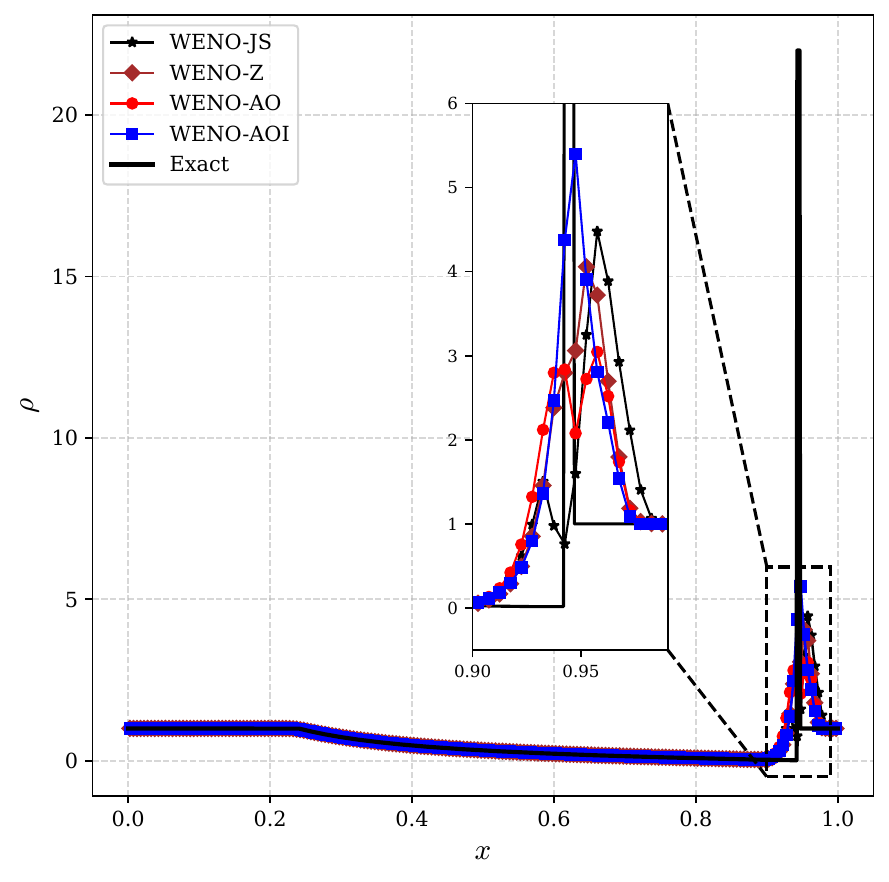}
			\caption{\ip}
		\end{subfigure}
		\\[6pt]
		\begin{subfigure}{0.48\textwidth}
			\centering
			\includegraphics[width=\textwidth]{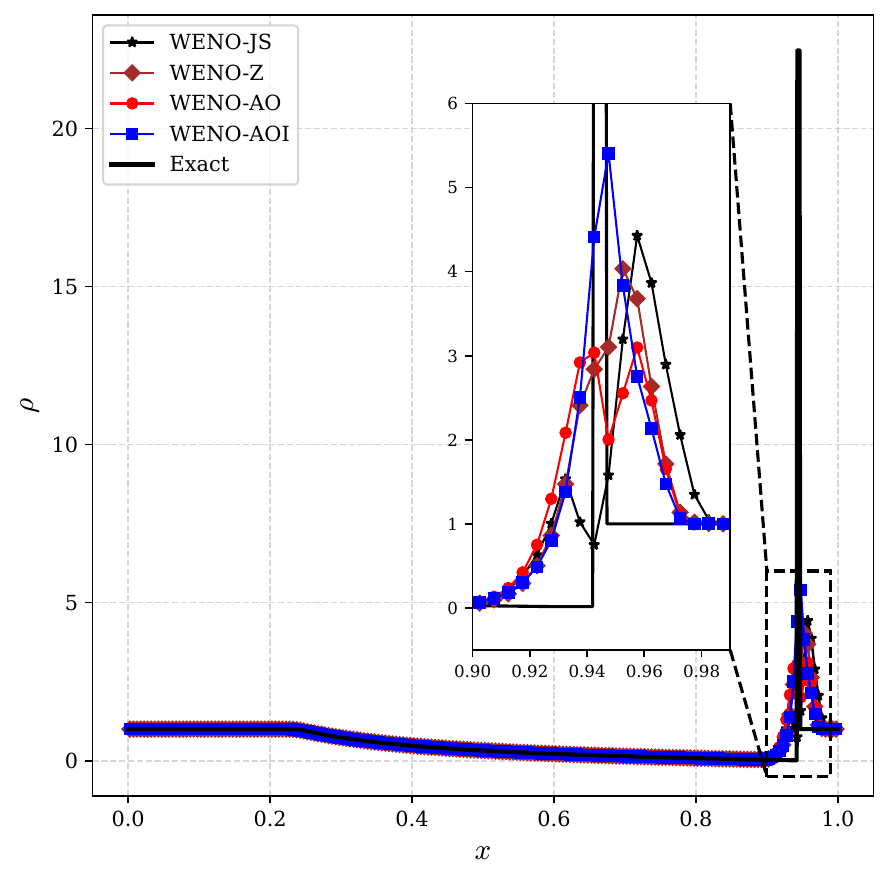}
			\caption{\rc}
		\end{subfigure}
		\hfill
		\begin{subfigure}{0.48\textwidth}
			\centering
			\includegraphics[width=\textwidth]{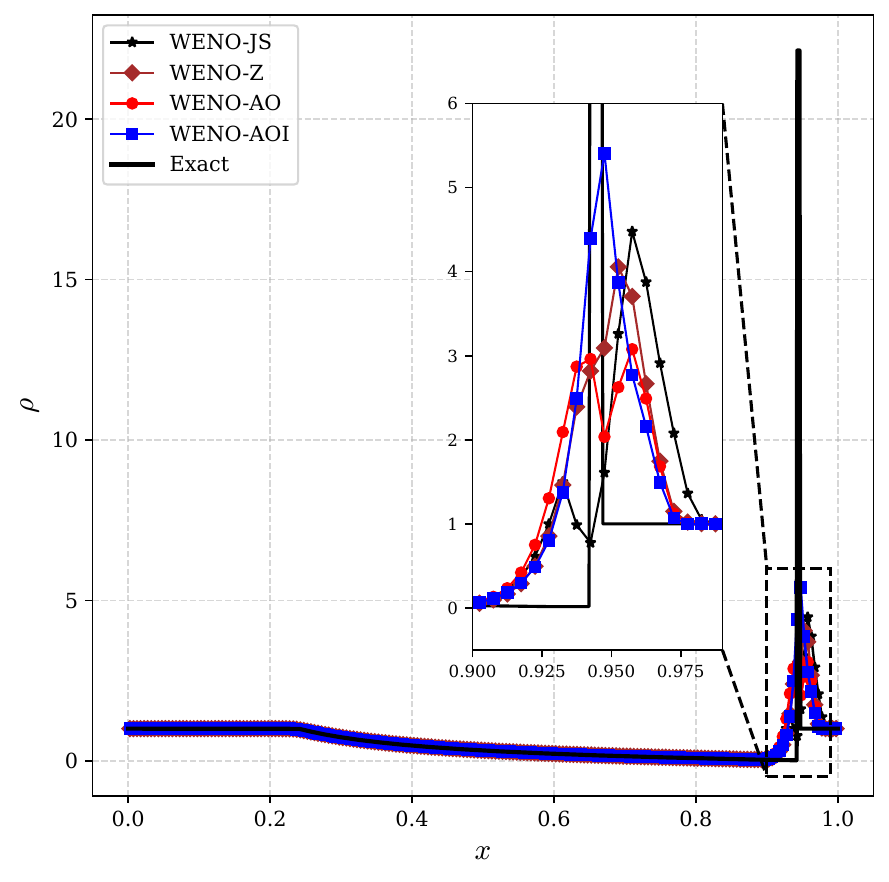}
			\caption{\tm}
		\end{subfigure}
		\caption{Numerical solutions for Test Problem~\ref{tp:rp5} at $T=0.45$ on a $200$-point mesh.}
		\label{fig:rp5}
	\end{figure}
	\begin{figure}[ht]
		\centering
		\begin{subfigure}{0.48\textwidth}
			\centering
			\includegraphics[width=\textwidth]{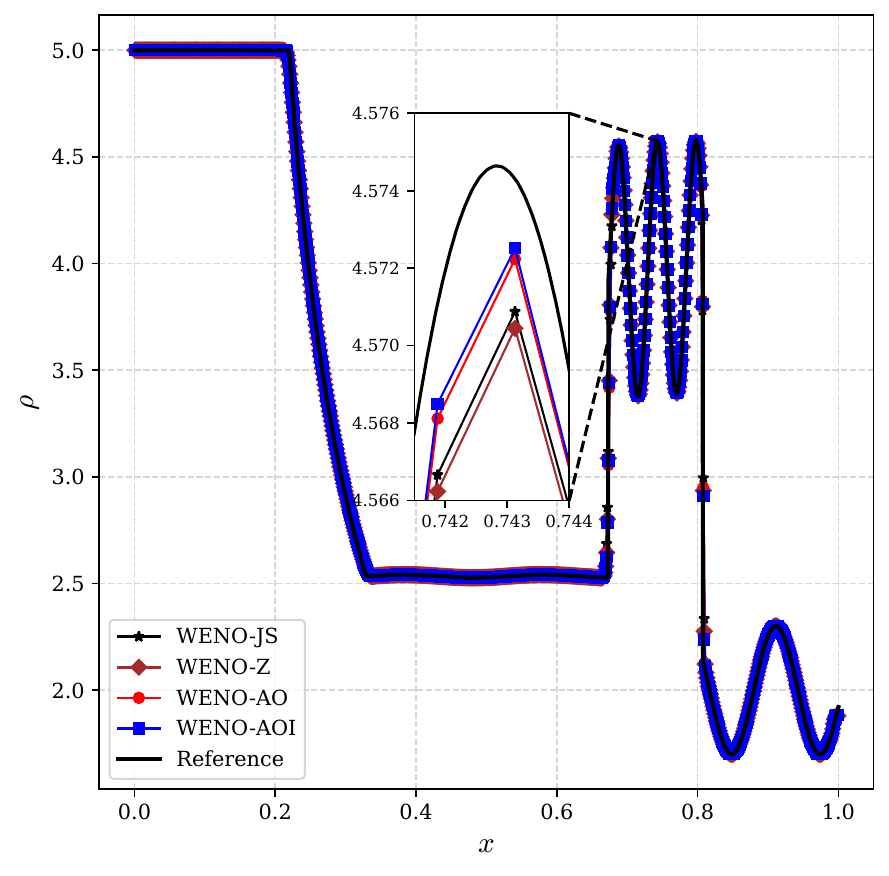}
			\caption{\id}
		\end{subfigure}
		\hfill
		\begin{subfigure}{0.48\textwidth}
			\centering
			\includegraphics[width=\textwidth]{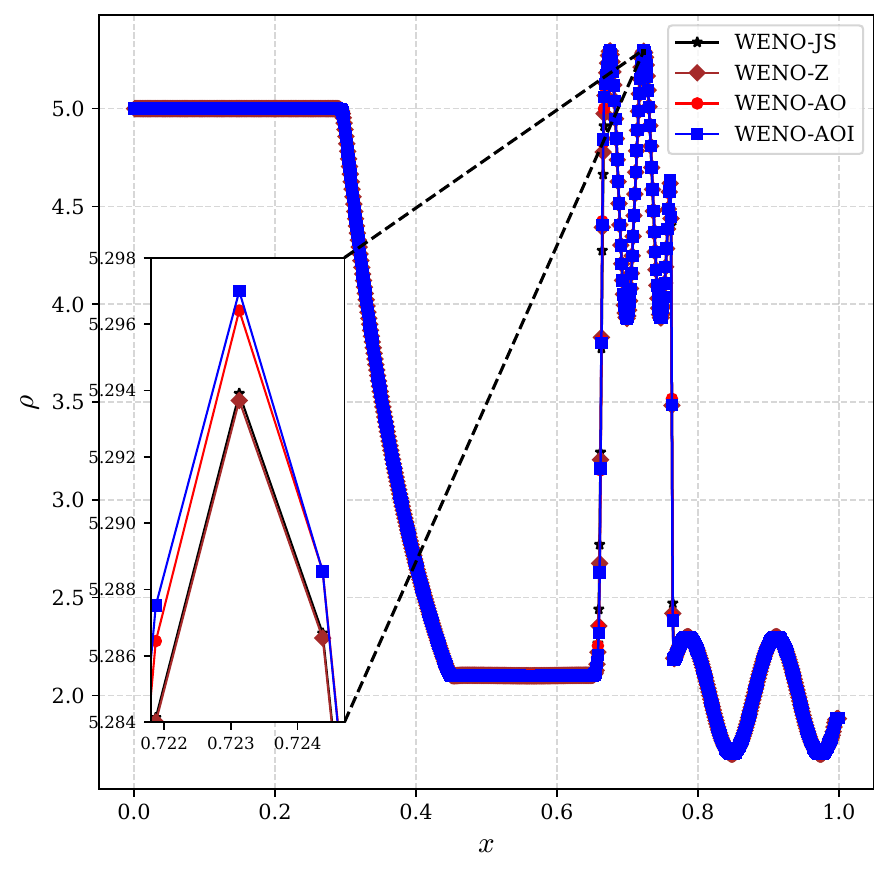}
			\caption{\ip}
		\end{subfigure}
		\\[6pt]
		\begin{subfigure}{0.48\textwidth}
			\centering
			\includegraphics[width=\textwidth]{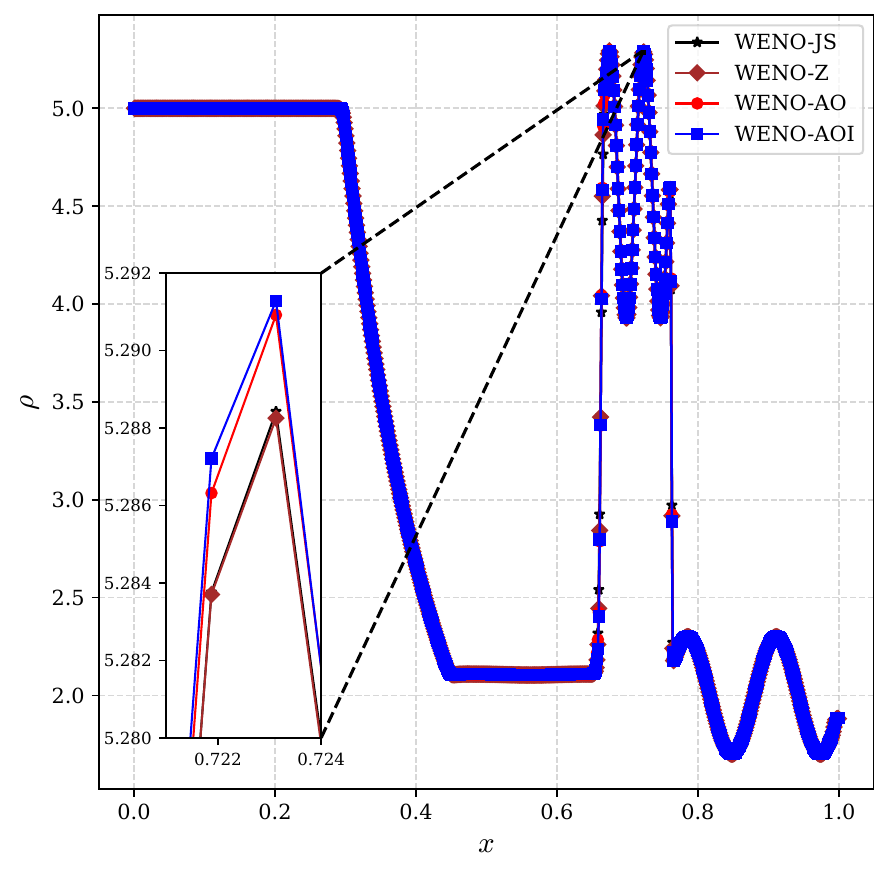}
			\caption{\rc}
		\end{subfigure}
		\hfill
		\begin{subfigure}{0.48\textwidth}
			\centering
			\includegraphics[width=\textwidth]{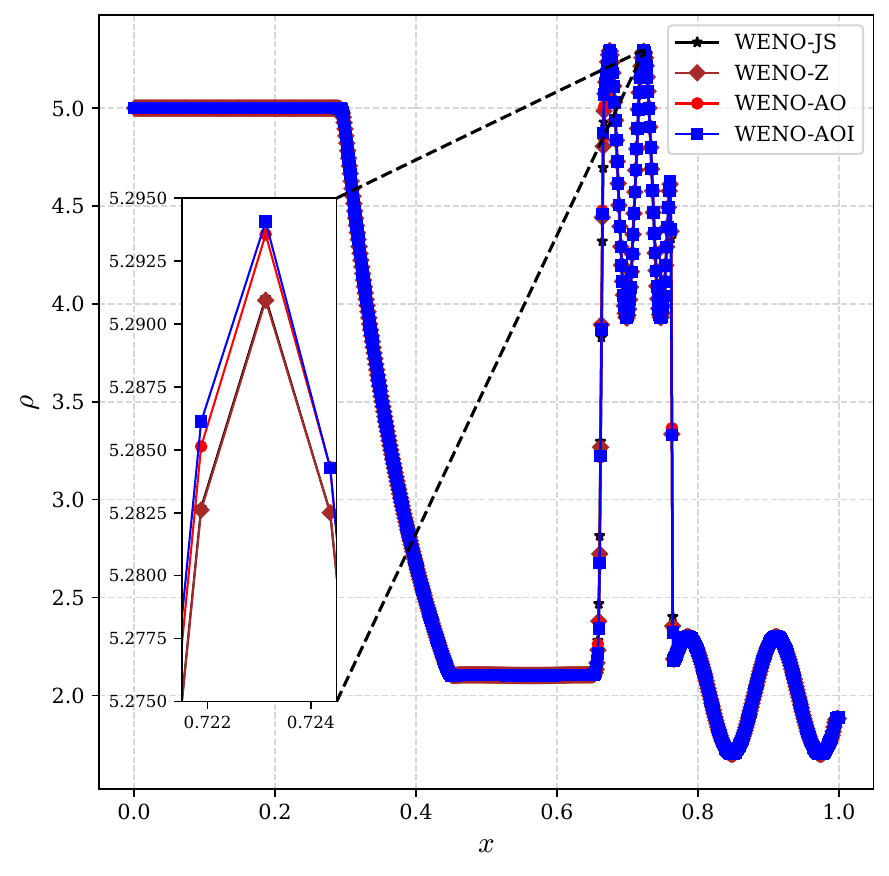}
			\caption{\tm}
		\end{subfigure}
		\caption{Numerical solutions for Test Problem~\ref{tp:SO} at $T=0.35$ on a $800$-point mesh.}
		\label{fig:so}
	\end{figure}

	\begin{testproblem}{Riemann Problem 2}
		\label{tp:rp2}
		We test the scheme on a shock tube Riemann problem from \cite{marti2003numerical}. The computational domain is $[0,1]$ with outflow boundary conditions. The initial conditions are specified as follows:
		\begin{equation*}\left(\rho,\,u,\,p\right) =
			\begin{cases}
				\left(1,0,10^3\right), & \text{if } x < 0.5, \\
				\left(1,0,10^{-2}\right), & \text{if } x > 0.5.
			\end{cases}
		\end{equation*}
		The exact solution contains all major wave types, making it an effective test for assessing the wave-capturing ability of the schemes. The numerical results at $t = 0.4$ obtained using $200$ grid points are shown in Figures \ref{fig:rp2_200}. We compare the numerical solutions obtained using the \wenojs, \wenoz, \wenoao, and \wenoaoi schemes with the exact solution. The numerical results accurately capture all the essential solution features for all four EOS. 
	\end{testproblem}
	\begin{testproblem}{Riemann Problem 3}
		\label{tp:rp3}
		We consider another Riemann problem from \cite{mignone2005hllc}. The computational domain is $[0,1]$, with the initial states given by
		\begin{align*}\label{eq:rp3}
			\left(\rho,\,u,\,p\right) =
			\begin{cases}
				\left(10,0,\dfrac{40}{3}\right), & \text{if } x < 0.5, \\
				\left(1,0,\dfrac{2}{3}\times10^{-5}\right), & \text{if } x > 0.5.
			\end{cases}
		\end{align*}
		The right state corresponds to an extremely low pressure, making this test case an excellent benchmark for examining the robustness of numerical schemes. Outflow boundary conditions are applied on both sides. The numerical results at $t=0.4$ obtained using $200$ grid points are shown in Figure \ref{fig:rp3}. We compare the numerical solutions obtained using the \wenojs, \wenoz, \wenoao, and \wenoaoi schemes with the exact solution. The numerical results accurately capture the essential flow features. \wenoaoi scheme provides slightly better resolution than the other schemes for all four EOS.
	\end{testproblem}
	\begin{figure}[ht]
		\centering
		\begin{subfigure}{0.48\textwidth}
			\centering
			\includegraphics[width=\textwidth]{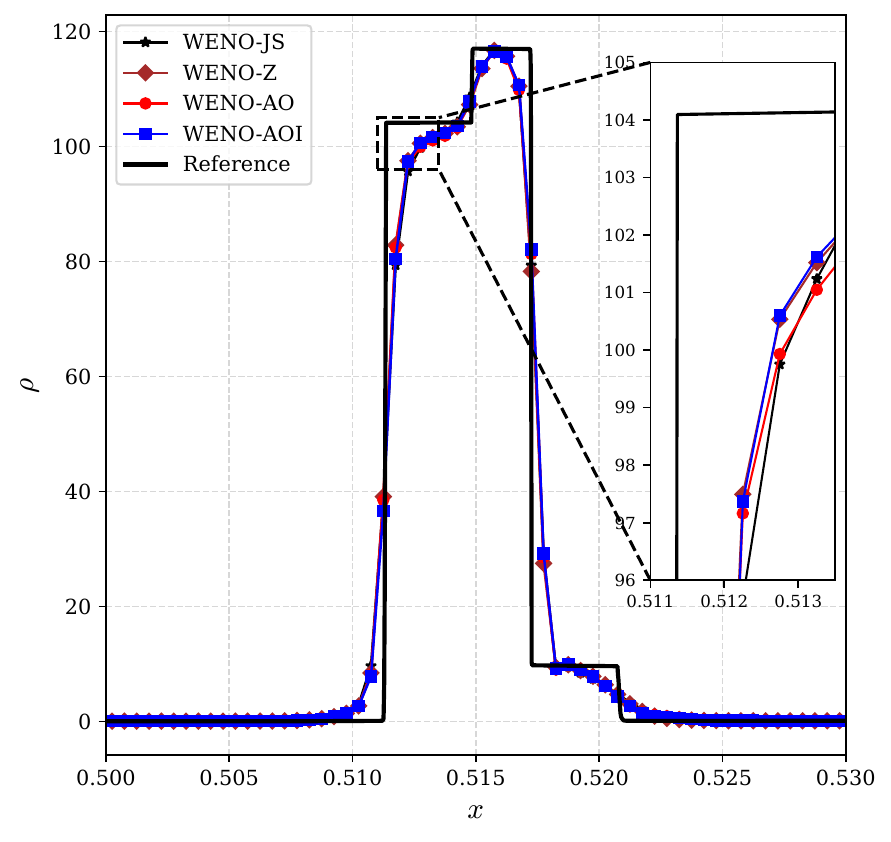}
			\caption{\id}
		\end{subfigure}
		\hfill
		\begin{subfigure}{0.48\textwidth}
			\centering
			\includegraphics[width=\textwidth]{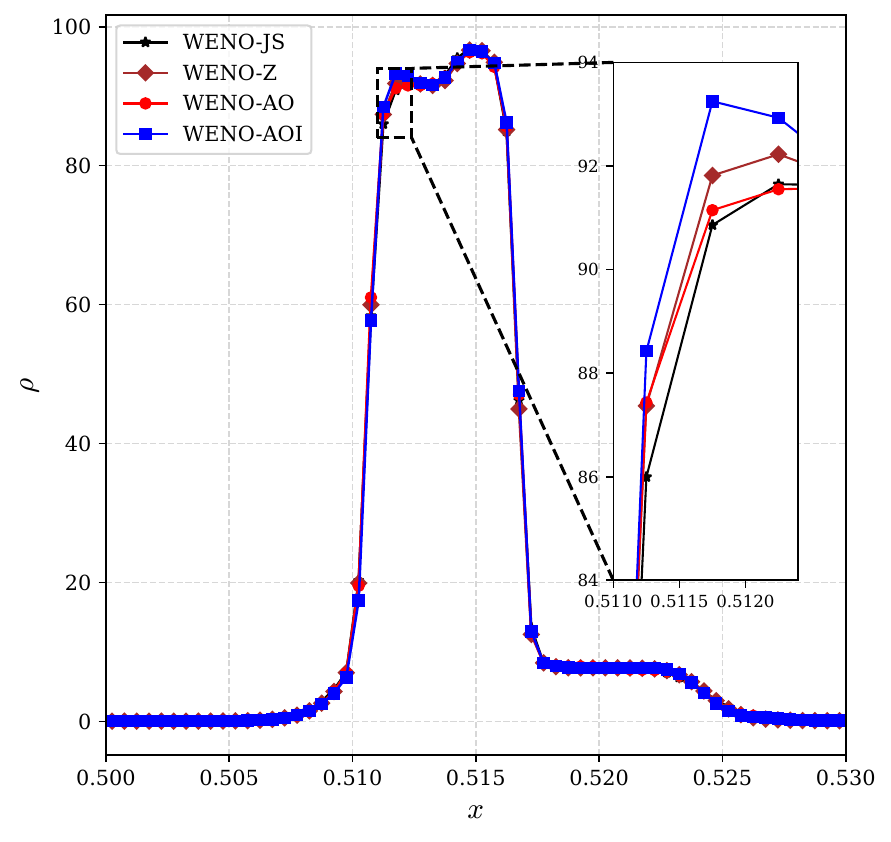}
			\caption{\ip}
		\end{subfigure}
		\\[6pt]
		\begin{subfigure}{0.48\textwidth}
			\centering
			\includegraphics[width=\textwidth]{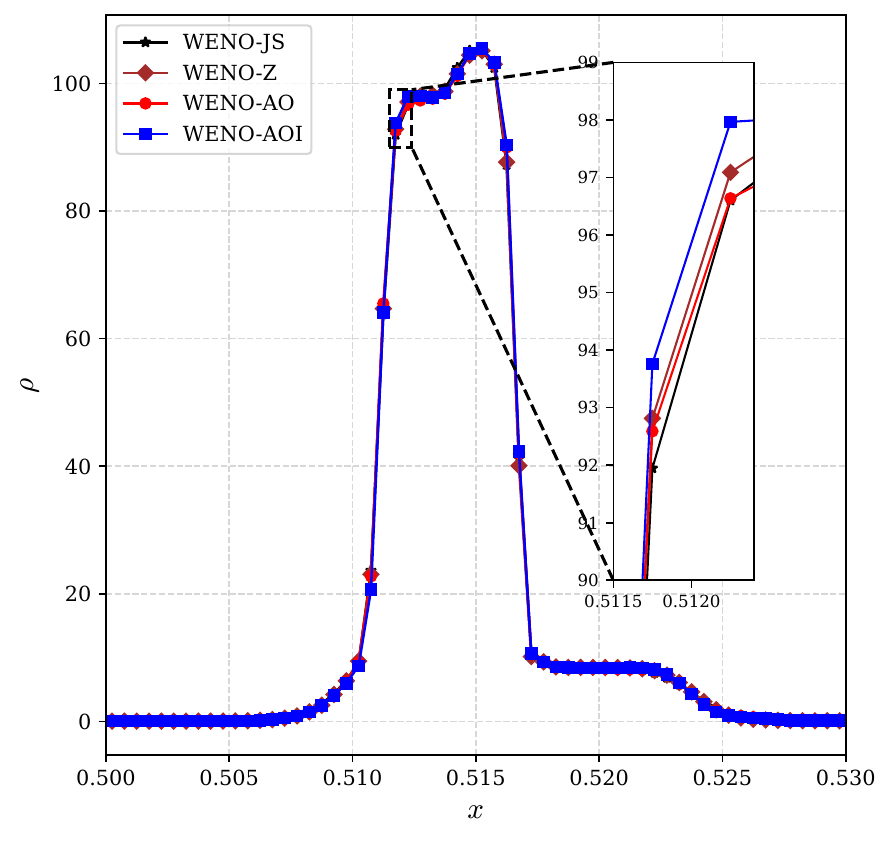}
			\caption{\rc}
		\end{subfigure}
		\hfill
		\begin{subfigure}{0.48\textwidth}
			\centering
			\includegraphics[width=\textwidth]{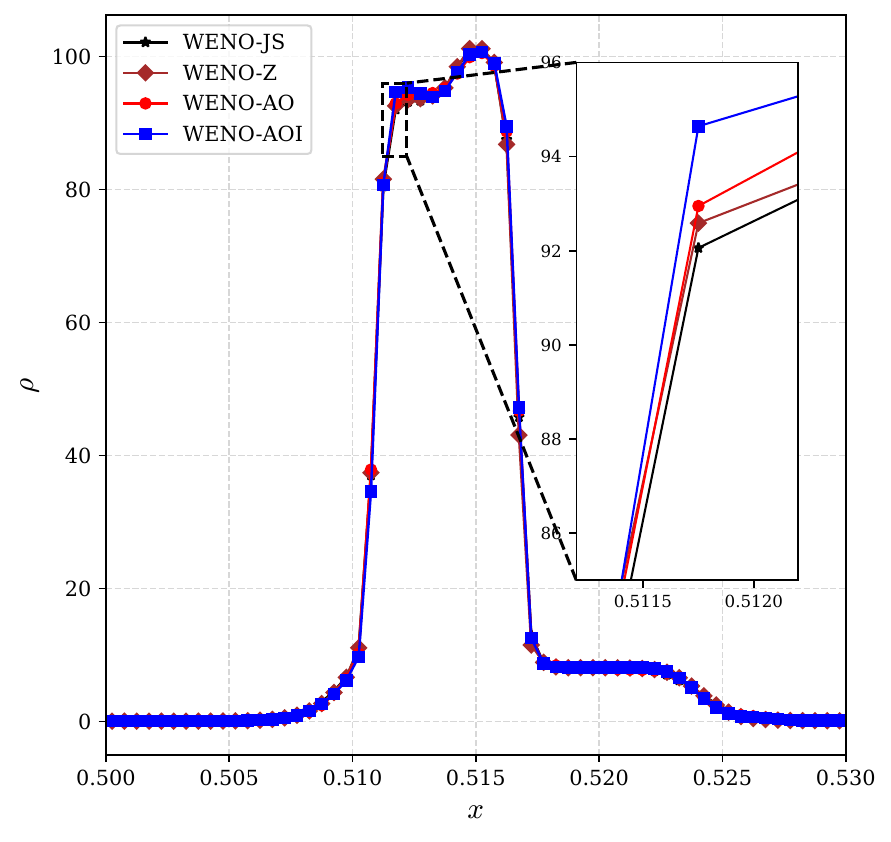}
			\caption{\tm}
		\end{subfigure}
		\caption{Numerical solutions for Test Problem~\ref{tp:BW} at $T=0.43$ on a $2000$-point mesh.}
		\label{fig:bw}
	\end{figure}
	\begin{figure}[ht]
		\centering
		\begin{subfigure}{0.48\textwidth}
			\centering
			\includegraphics[width=\textwidth]{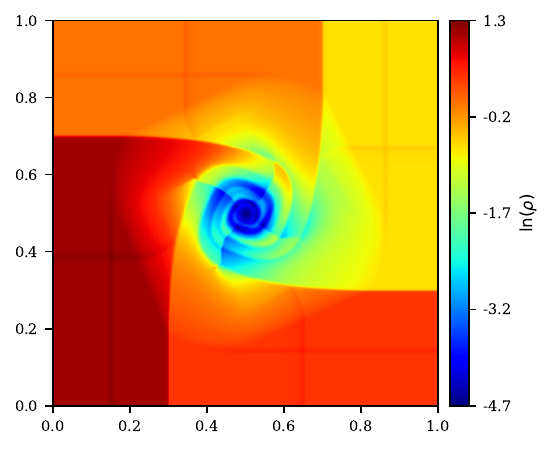}
			\caption{\id}
		\end{subfigure}
		\hfill
		\begin{subfigure}{0.48\textwidth}
			\centering
			\includegraphics[width=\textwidth]{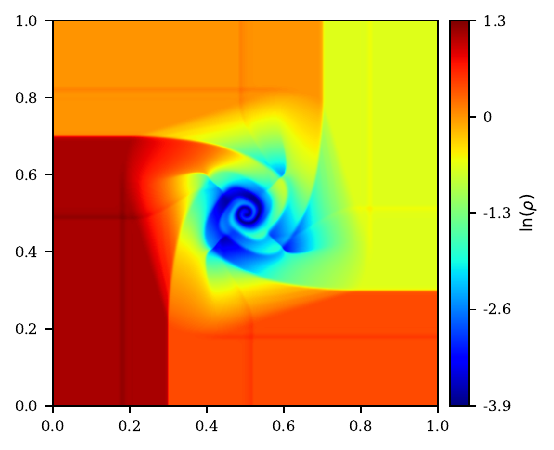}
			\caption{\ip}
		\end{subfigure}
		\\[6pt]
		\begin{subfigure}{0.48\textwidth}
			\centering
			\includegraphics[width=\textwidth]{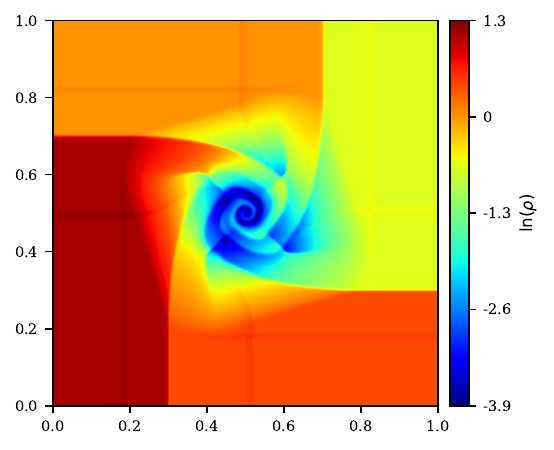}
			\caption{\rc}
		\end{subfigure}
		\hfill
		\begin{subfigure}{0.48\textwidth}
			\centering
			\includegraphics[width=\textwidth]{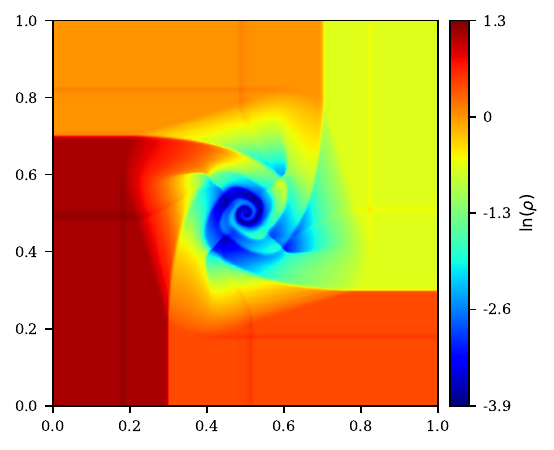}
			\caption{\tm}
		\end{subfigure}
		\caption{Plots of $\ln(\rho)$ for Test Problem~\ref{tp:rp2d1} at time $t=0.4$ on a $400\times400$ mesh for different equations of state.}
		\label{fig:rp2d1}
	\end{figure}
	\begin{testproblem}{Riemann Problem 4}
		\label{tp:rp4}
		We next consider another Riemann problem from \cite{zhang2006ram}. The computational domain is $[0,1]$, with outflow boundary conditions. The initial states are specified by
		\begin{align*}
			\left(\rho,\,u,\,p\right) =
			\begin{cases}
				\left(1,0.9,1\right), & \text{if } x < 0.5, \\
				\left(1,0,10\right), & \text{if } x > 0.5.
			\end{cases}
		\end{align*}
		The numerical results at $t = 0.4$ with $200$ grid points are shown in Figure \ref{fig:rp4}. We compare the numerical solutions obtained using the \wenojs, \wenoz, \wenoao, and \wenoaoi schemes with the exact solution. The numerical results accurately capture the shock waves and contact discontinuity present in the solution. Again, the \wenoaoi scheme provides slightly better resolution than the other schemes for all four EOS.
	\end{testproblem}
	\begin{testproblem}{Riemann Problem 5}
		\label{tp:rp5}
		We next test another Riemann problem from \cite{wu2015high}. The computational domain is $[0,1]$ with outflow boundary conditions. The initial states of this Riemann problem are
		\begin{align*}
			(\rho,u,p)=
			\begin{cases}
				(1,\,0,\,10^{4}), & \text{if } x<0.5,\\[2mm]
				(1,\,0,\,10^{-8}), & \text{if } x>0.5.
			\end{cases}
		\end{align*}
		The right state has an extremely low pressure. When we compare the left and right pressures, there is a very large pressure jump. The numerical result at $t=0.45$ is obtained using $200$ grid points and presented in Figure \ref{fig:rp5}. We compare the obtained numerical solution using the \wenojs, \wenoz, \wenoao, and \wenoaoi schemes with the exact solution. For all the EOS, the numerical results accurately capture all the essential solution features.
	\end{testproblem}
	\begin{figure}[ht]
		\centering
		\begin{subfigure}{0.48\textwidth}
			\centering
			\includegraphics[width=\textwidth]{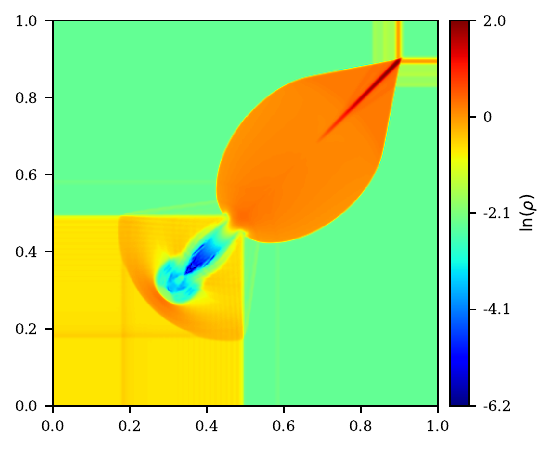}
			\caption{\id}
		\end{subfigure}
		\hfill
		\begin{subfigure}{0.48\textwidth}
			\centering
			\includegraphics[width=\textwidth]{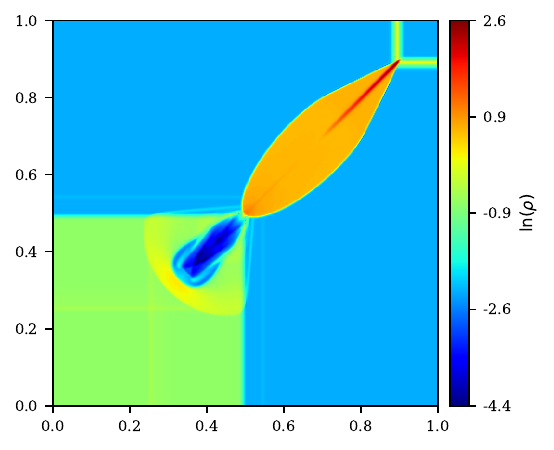}
			\caption{\ip}
		\end{subfigure}
		\\[6pt]
		\begin{subfigure}{0.48\textwidth}
			\centering
			\includegraphics[width=\textwidth]{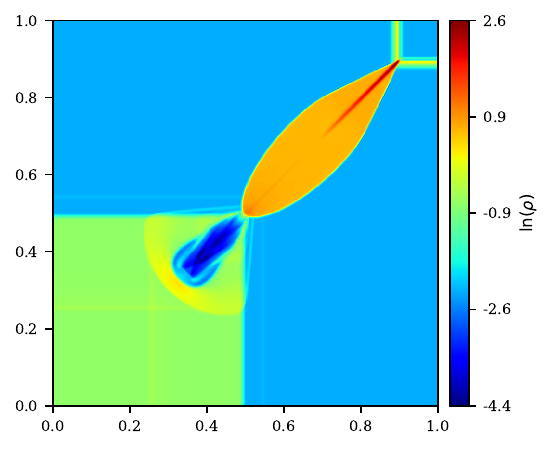}
			\caption{\rc}
		\end{subfigure}
		\hfill
		\begin{subfigure}{0.48\textwidth}
			\centering
			\includegraphics[width=\textwidth]{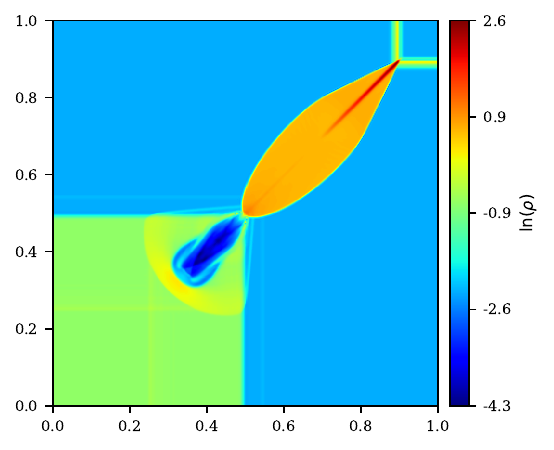}
			\caption{\tm}
		\end{subfigure}
		\caption{Results for Example~\ref{tp:rp2d2} at time $t=0.4$ on a $400\times400$ mesh. Panels show surface plot of $\ln(\rho)$ obtained using the WENO-AOI scheme with different equations of state.}
		\label{fig:rp2d2}
	\end{figure}
	\begin{testproblem}{Shu--Osher test}
		\label{tp:SO}
		The Shu--Osher problem is a classical benchmark used to assess the ability of high-order numerical methods to resolve small-scale sinusoidal wave structures. The computational domain is $[0,1]$ with outflow boundary conditions. The initial conditions are given by
		\begin{align*}
			(\rho,u,p)=
			\begin{cases}
				(5,\,0,\,50), & \text{if } x<0.5,\\[2mm]
				(2+0.3\sin(50x),\,0,\,5), & \text{if } x>0.5.
			\end{cases}
		\end{align*}
		The numerical solution at $t=0.35$ is computed using $800$ grid points and compared with a high-resolution reference solution in Figure~\ref{fig:so}. For all the considered EOS, the \wenojs, \wenoz, \wenoao, and \wenoaoi schemes capture the shock sharply while resolving the small-scale oscillatory structures in the post-shock region. The consistent results across all EOS demonstrate the robustness of the schemes, with \wenoaoi providing the better resolution among the schemes considered.
	\end{testproblem}
	\begin{testproblem}{Blast--Wave test}
		\label{tp:BW}
		This test problem is taken from \cite{wu2015high,wu2017physical}. It involves the interaction of waves generated by two strong initial discontinuities, making it a challenging benchmark for assessing the robustness and resolution of high-order numerical schemes. The computational domain is $[0,1]$, and the initial conditions are given by
		\begin{align*}
			(\rho,u,p)=
			\begin{cases}
				(1,\,0,\,10^{3}), & \text{if } x<0.1,\\[2mm]
				(1,\,0,\,10^{-2}), & \text{if } 0.1\le x<0.9,\\[2mm]
				(1,\,0,\,10^{2}), & \text{if } x\ge 0.9.
			\end{cases}
		\end{align*}
		with outflow boundary conditions. The numerical solution is computed at time $t=0.43$ on a mesh with $2000$ grid points and presented in Figure~\ref{fig:bw}. The numerical solutions obtained using the \wenojs, \wenoz, \wenoao, and \wenoaoi schemes are compared with the reference solution. For all the considered EOS, the schemes accurately capture the strong shocks and complex wave interactions, showing close agreement with the reference. In particular, all schemes maintain PCP throughout the computation, demonstrating their robustness for strong discontinuities and challenging wave interactions across different EOS.
	\end{testproblem}
	\begin{figure}[ht]
		\centering
		\begin{subfigure}{0.48\textwidth}
			\centering
			\includegraphics[width=\textwidth]{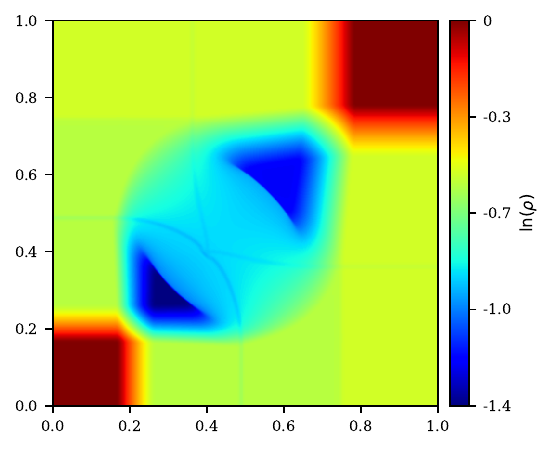}
			\caption{\id}
		\end{subfigure}
		\hfill
		\begin{subfigure}{0.48\textwidth}
			\centering
			\includegraphics[width=\textwidth]{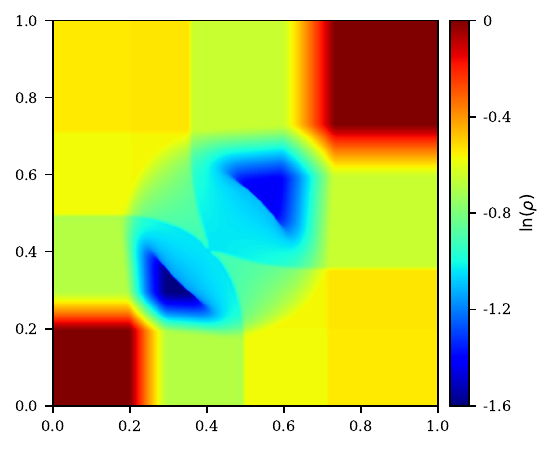}
			\caption{\ip}
		\end{subfigure}
		\\[6pt]
		\begin{subfigure}{0.48\textwidth}
			\centering
			\includegraphics[width=\textwidth]{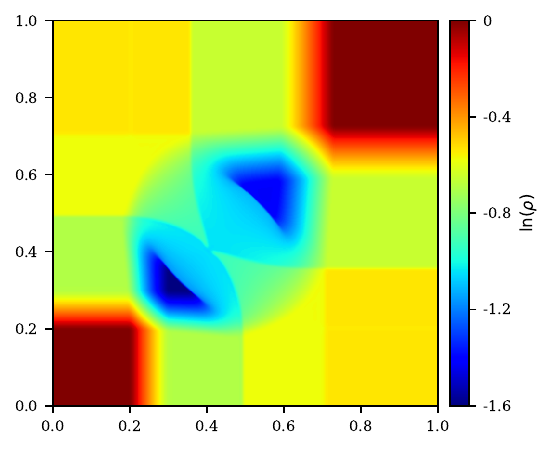}
			\caption{\rc}
		\end{subfigure}
		\hfill
		\begin{subfigure}{0.48\textwidth}
			\centering
			\includegraphics[width=\textwidth]{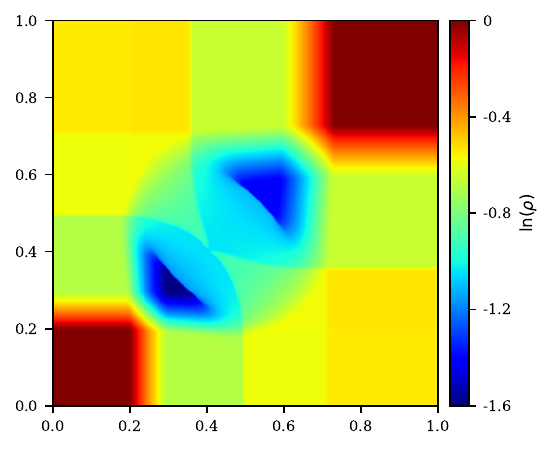}
			\caption{\tm}
		\end{subfigure}
		\caption{Results for Example~\ref{tp:rp2d3} at time $t=0.4$ on a $400\times400$ mesh. Panels show surface plot of $\ln(\rho)$ obtained using the WENO-AOI scheme with different equations of state.}
		\label{fig:rp2d3}
	\end{figure}
	\begin{figure}[ht!]
		\centering
		\begin{subfigure}{0.48\textwidth}
			\centering
			\includegraphics[width=\textwidth]{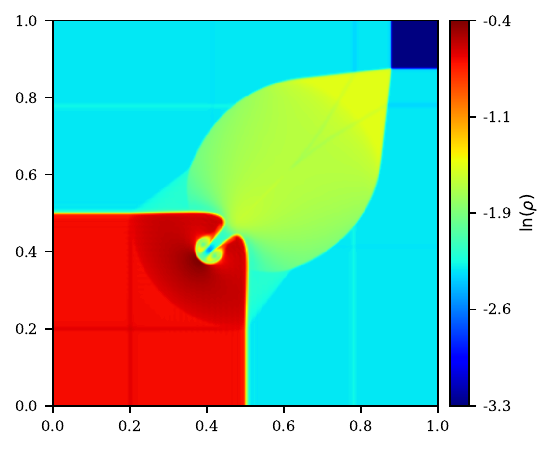}
			\caption{\id}
		\end{subfigure}
		\hfill
		\begin{subfigure}{0.48\textwidth}
			\centering
			\includegraphics[width=\textwidth]{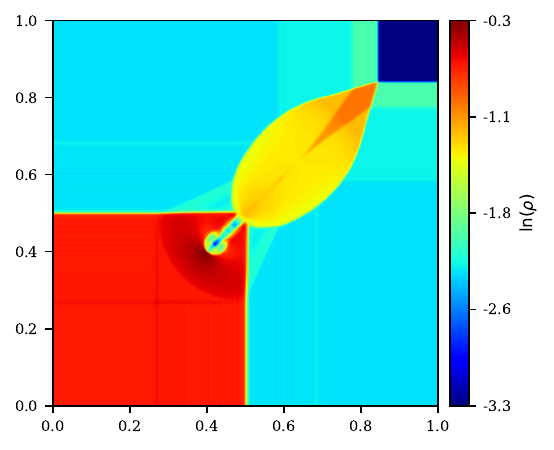}
			\caption{\ip}
		\end{subfigure}
		\\[6pt]
		\begin{subfigure}{0.48\textwidth}
			\centering
			\includegraphics[width=\textwidth]{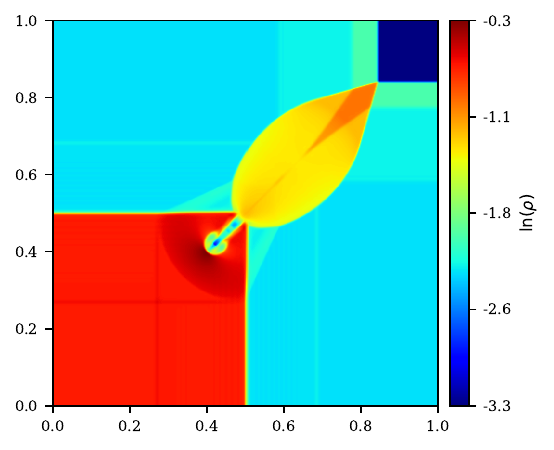}
			\caption{\rc}
		\end{subfigure}
		\hfill
		\begin{subfigure}{0.48\textwidth}
			\centering
			\includegraphics[width=\textwidth]{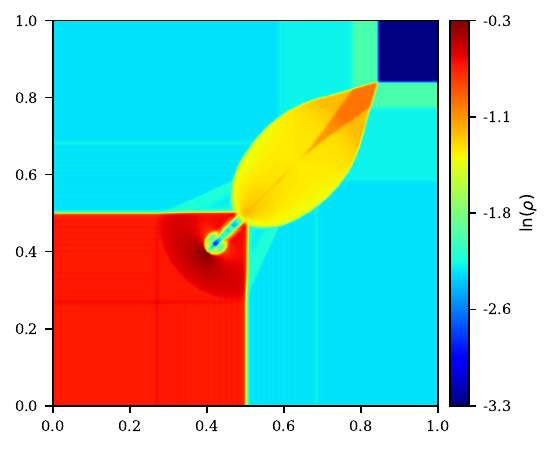}
			\caption{\tm}
		\end{subfigure}
		\caption{Results for Example~\ref{tp:rp2d4} at time $t=0.4$ on a $400\times400$ mesh. Panels show surface plot of $\ln(\rho)$ obtained using the WENO-AOI scheme with different equations of state.}
		\label{fig:rp2d4}
	\end{figure}
	\subsection{Two dimensional numerical tests}\label{sec:num2d}
	We now present two dimensional test cases. We start with standard two-dimensional Riemann problems and then move on to more complex two-dimensional problems. The numerical results are obtained using \wenoaoi schemes with all equations of state presented in Section~\ref{subsec:eos}.
	\begin{testproblem}{Two-dimensional Riemann Problem 1}
		\label{tp:rp2d1}	
		This test is a two-dimensional Riemann problem from \cite{nunez2016xtroem}. The computational domain is $[0,1]\times[0,1]$, and the initial data are given by
		
		\begin{equation*}
			\left(\rho,\,u_x,\,u_y,\,p\right)=
			\begin{cases}
				\left(0.5,\,0.5,\,-0.5,\,5\right), & \text{if } x>0.5 \text{ and } y>0.5,\\
				\left(1,\,0.5,\,0.5,\,5\right), & \text{if } x<0.5 \text{ and } y>0.5,\\
				\left(3,\,-0.5,\,0.5,\,5\right), & \text{if } x<0.5 \text{ and } y<0.5,\\
				\left(1.5,\,-0.5,\,-0.5,\,5\right), & \text{if } x>0.5 \text{ and } y<0.5.
			\end{cases}
		\end{equation*}
		
		The simulations are performed on a uniform $400\times400$ mesh with outflow boundary conditions. For the \id, the ratio of specific heats is fixed at $\gamma=5/3$. Numerical results obtained using the \wenoaoi scheme at time $t=0.4$ are presented in Figure~\ref{fig:rp2d1}.
		
		The interaction of the four initial discontinuities generates a complex wave pattern characterized by a central low-density region and a spiral-like vortex structure. While the overall flow features are similar for all four equations of state, noticeable differences appear in the detailed structure and intensity of the low-density region. In particular, the solution obtained with the \id exhibits a more pronounced low-density core and a slightly different vortex structure, whereas the results corresponding to the \ip, \rc, and \tm are qualitatively very similar. The \wenoaoi scheme captures these complex flow structures for all four equations of state.
	\end{testproblem}
	\begin{figure}[ht!]
		\centering
		\begin{subfigure}{0.48\textwidth}
			\centering
			\includegraphics[width=\textwidth]{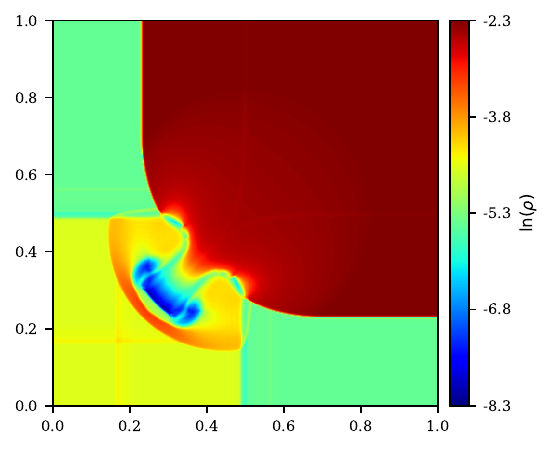}
			\caption{\id}
		\end{subfigure}
		\hfill
		\begin{subfigure}{0.48\textwidth}
			\centering
			\includegraphics[width=\textwidth]{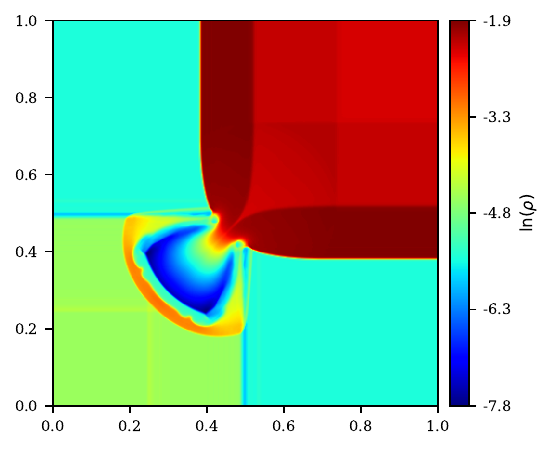}
			\caption{\ip}
		\end{subfigure}
		\\[6pt]
		\begin{subfigure}{0.48\textwidth}
			\centering
			\includegraphics[width=\textwidth]{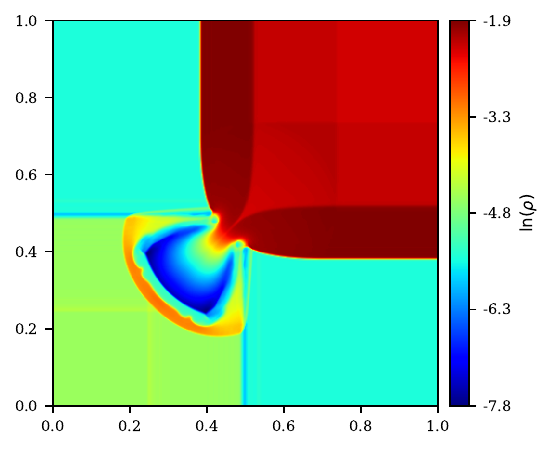}
			\caption{\rc}
		\end{subfigure}
		\hfill
		\begin{subfigure}{0.48\textwidth}
			\centering
			\includegraphics[width=\textwidth]{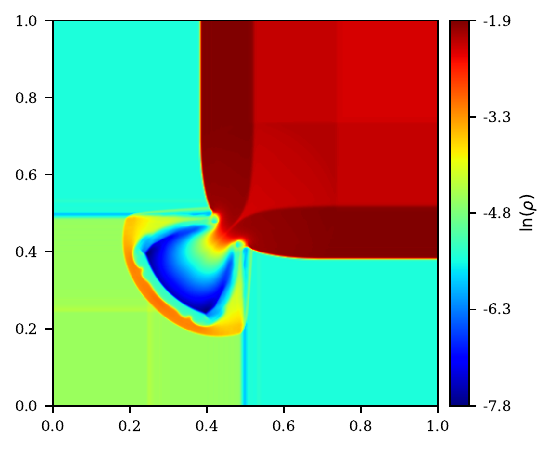}
			\caption{\tm}
		\end{subfigure}
		\caption{Results for Example~\ref{tp:rp2d5} at time $t=0.4$ on a $400\times400$ mesh. Panels show surface plot of $\ln(\rho)$ obtained using the WENO-AOI scheme with different equations of state.}
		\label{fig:rp2d5}
	\end{figure}
	\begin{testproblem}{Two-dimensional Riemann Problem 2}
		\label{tp:rp2d2}
		We consider another two-dimensional Riemann problem similar to the one in \cite{del2002efficient}. The computational domain $[0,1]\times[0,1]$ is initialized with the following states:
		\begin{equation*}
			\left(\rho,\,u_x,\,u_y,\,p\right)=
			\begin{cases}
				\left(0.1,\,0,\,0,\,0.01\right), & \text{if } x>0.5 \text{ and } y>0.5,\\
				\left(0.1,\,0.99,\,0,\,1\right), & \text{if } x<0.5 \text{ and } y>0.5,\\
				\left(0.5,\,0,\,0,\,1\right), & \text{if } x<0.5 \text{ and } y<0.5,\\
				\left(0.1,\,0,\,0.99,\,1\right), & \text{if } x>0.5 \text{ and } y<0.5.
			\end{cases}
		\end{equation*}
		The solution involves the interaction of two shocks and two contact discontinuities, resulting in a complex wave pattern. The simulations are performed on a uniform $400\times400$ mesh with outflow boundary conditions. Numerical results obtained using the \wenoaoi scheme at time $t=0.4$ are presented in Figure~\ref{fig:rp2d2}.
		
		A complex two-dimensional wave structure develops, featuring a high-density region extending from the center toward the upper-right part of the domain and a low-density region in the lower-left region. The solution with \id differs noticeably from those obtained with the \ip, \rc, and \tm, which exhibit similar overall flow features. The \wenoaoi scheme captures the complex wave interactions and the associated sharp structures consistently for all four equations of state.
		
	\end{testproblem}
	\begin{testproblem}{Two-dimensional Riemann Problem 3}
		\label{tp:rp2d3}
		We next consider a two-dimensional Riemann problem from \cite{he2012adaptive}. The computational domain $[0,1]\times[0,1]$ is initialized with the following states:
		
		\begin{equation*}
			\left(\rho,\,u_x,\,u_y,\,p\right)=
			\begin{cases}
				\left(1,\,0,\,0,\,1\right), & \text{if } x>0.5 \text{ and } y>0.5,\\
				\left(0.5771,\,-0.3529,\,0,\,0.4\right), & \text{if } x<0.5 \text{ and } y>0.5,\\
				\left(1,\,-0.3529,\,-0.3529,\,1\right), & \text{if } x<0.5 \text{ and } y<0.5,\\
				\left(0.5771,\,0,\,-0.3529,\,0.4\right), & \text{if } x>0.5 \text{ and } y<0.5.
			\end{cases}
		\end{equation*}
		
		The solution involves the interaction of two rarefaction waves, resulting in the formation of two symmetric shock waves. The simulations are performed on a uniform $400\times400$ mesh with outflow boundary conditions. Numerical results obtained using the \wenoaoi scheme at time $t=0.4$ are presented in Figure~\ref{fig:rp2d3}.
		
		The resulting flow contains both rarefaction and shock structures. All four equations of state yield nearly identical flow patterns, with no significant differences observed in the overall wave structures. The \wenoaoi scheme captures these wave interactions consistently for all four equations of state.
		
	\end{testproblem}
	\begin{figure}[ht!]
		\centering
		\begin{subfigure}{0.48\textwidth}
			\centering
			\includegraphics[width=\textwidth]{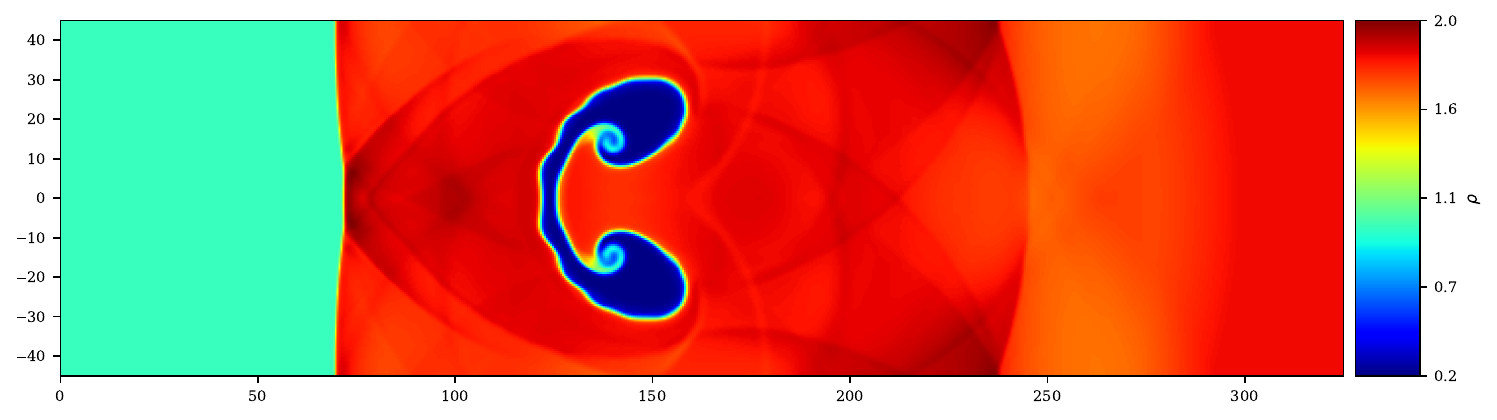}
			\caption{\id}
		\end{subfigure}
		\hfill
		\begin{subfigure}{0.48\textwidth}
			\centering
			\includegraphics[width=\textwidth]{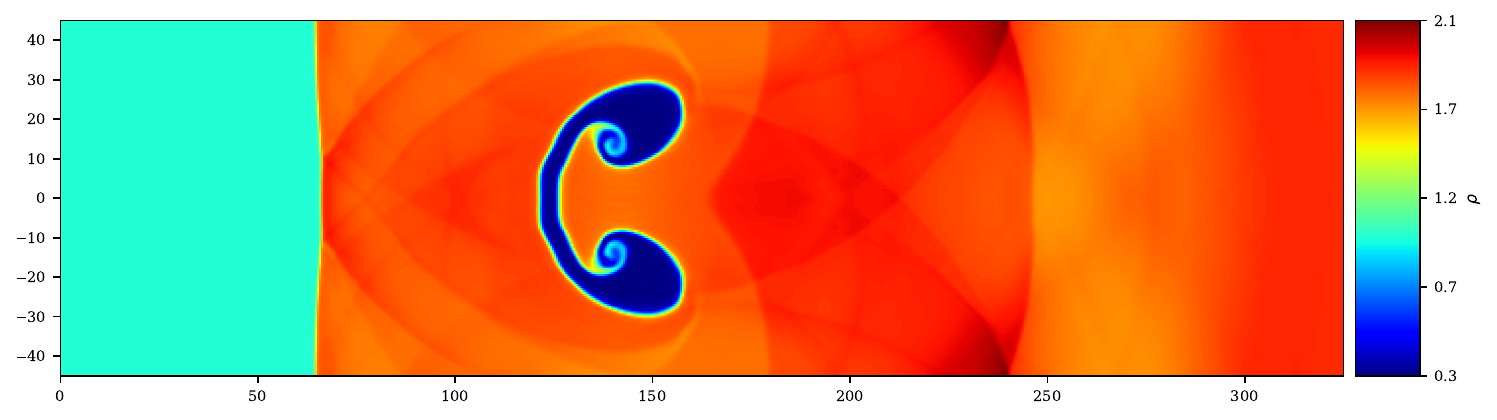}
			\caption{\ip}
		\end{subfigure}
		\\[6pt]
		\begin{subfigure}{0.48\textwidth}
			\centering
			\includegraphics[width=\textwidth]{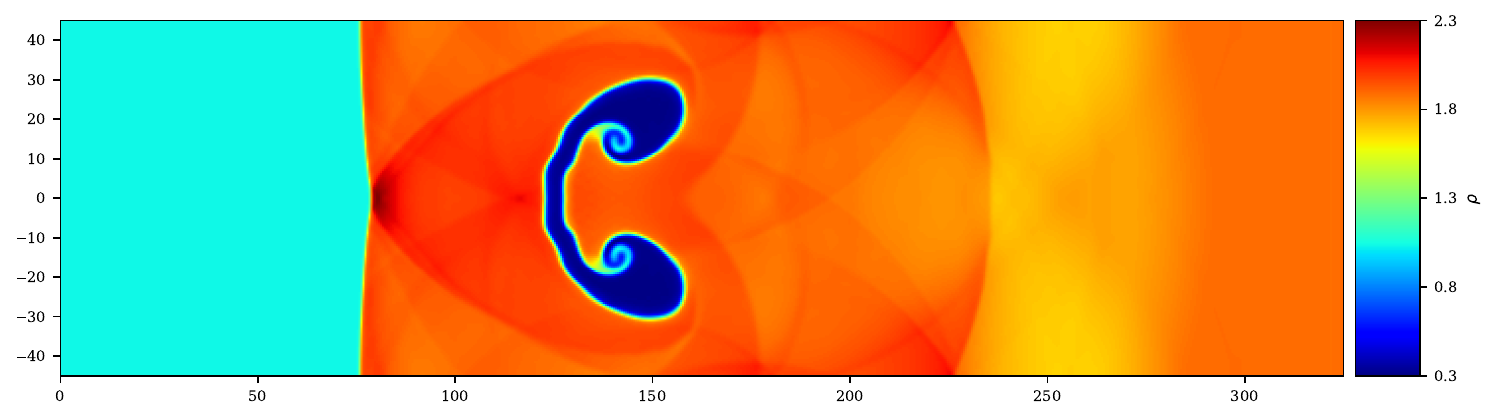}
			\caption{\rc}
		\end{subfigure}
		\hfill
		\begin{subfigure}{0.48\textwidth}
			\centering
			\includegraphics[width=\textwidth]{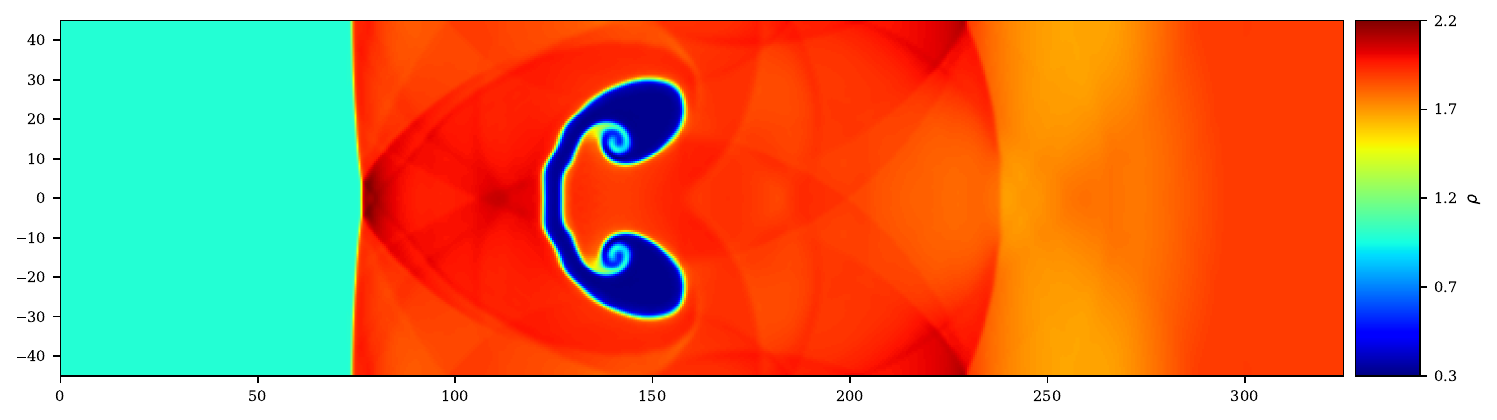}
			\caption{\tm}
		\end{subfigure}
		\caption{Density distribution for Example~\ref{tp:sbi1} at time $t=450$ on a $650\times180$ mesh, obtained using the WENO-AOI scheme with different equations of state.}
		\label{fig:pbsb1}
	\end{figure}
	\begin{testproblem}{Two-dimensional Riemann Problem 4}
		\label{tp:rp2d4}
		This test case is taken from \cite{nunez2016xtroem}. The computational domain is $[0,1]\times[0,1]$, and the initial conditions are given by
		
		\begin{equation*}
			\left(\rho,\,u_x,\,u_y,\,p\right)=
			\begin{cases}
				(0.035145216124503,\,0,\,0,\,0.162931056509027), & \text{if } x>0.5 \text{ and } y>0.5,\\
				(0.1,\,0.7,\,0,\,1), & \text{if } x<0.5 \text{ and } y>0.5,\\
				(0.5,\,0,\,0,\,1), & \text{if } x<0.5 \text{ and } y<0.5,\\
				(0.1,\,0,\,0.7,\,1), & \text{if } x>0.5 \text{ and } y<0.5.
			\end{cases}
		\end{equation*}
		
		The simulations are performed on a uniform $400\times400$ mesh with outflow boundary conditions. Numerical results obtained using the \wenoaoi scheme at time $t=0.4$ are presented in Figure~\ref{fig:rp2d4}.
		
		A complex wave pattern develops, featuring a curved high-density structure extending toward the upper-right region of the domain and a low-density region near the center. While the solutions obtained with the four equations of state capture the same overall flow features, noticeable differences are observed in the detailed wave structures. In particular, the \id solution differs visibly from the other three, whereas the results corresponding to the \ip, \rc, and \tm exhibit more similar flow patterns. The \wenoaoi scheme successfully resolves these complex flow structures for all four equations of state.
		
	\end{testproblem}
	\begin{figure}[ht!]
		\centering
		\begin{subfigure}{0.48\textwidth}
			\centering
			\includegraphics[width=\textwidth]{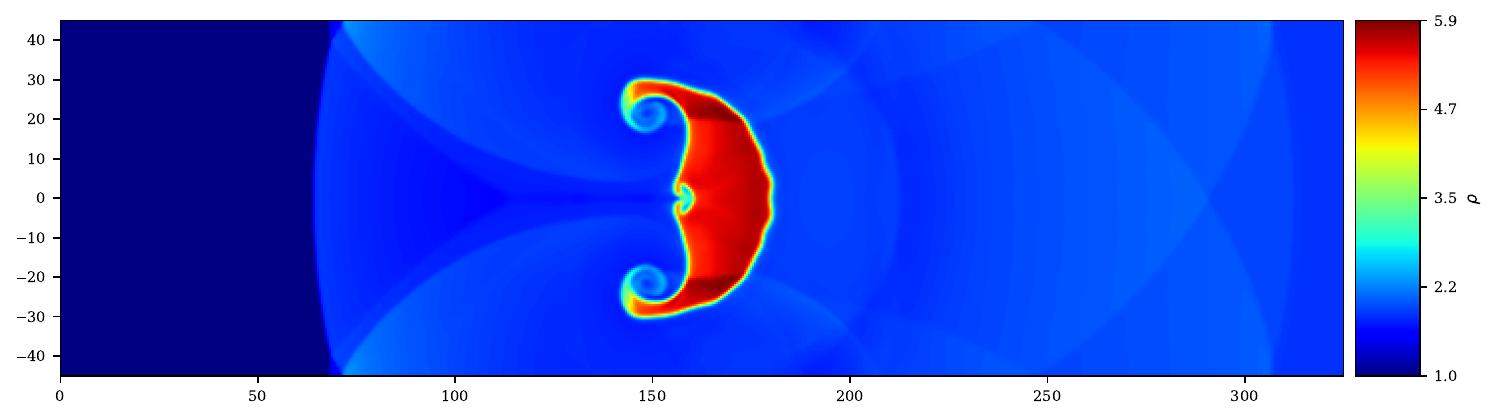}
			\caption{\id}
		\end{subfigure}
		\hfill
		\begin{subfigure}{0.48\textwidth}
			\centering
			\includegraphics[width=\textwidth]{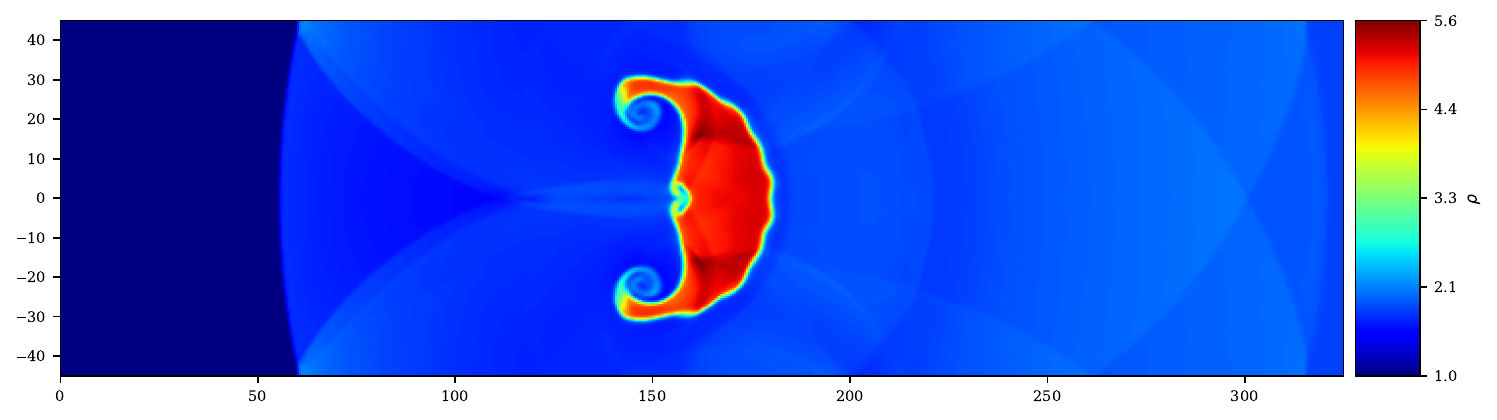}
			\caption{\ip}
		\end{subfigure}
		\\[6pt]
		\begin{subfigure}{0.48\textwidth}
			\centering
			\includegraphics[width=\textwidth]{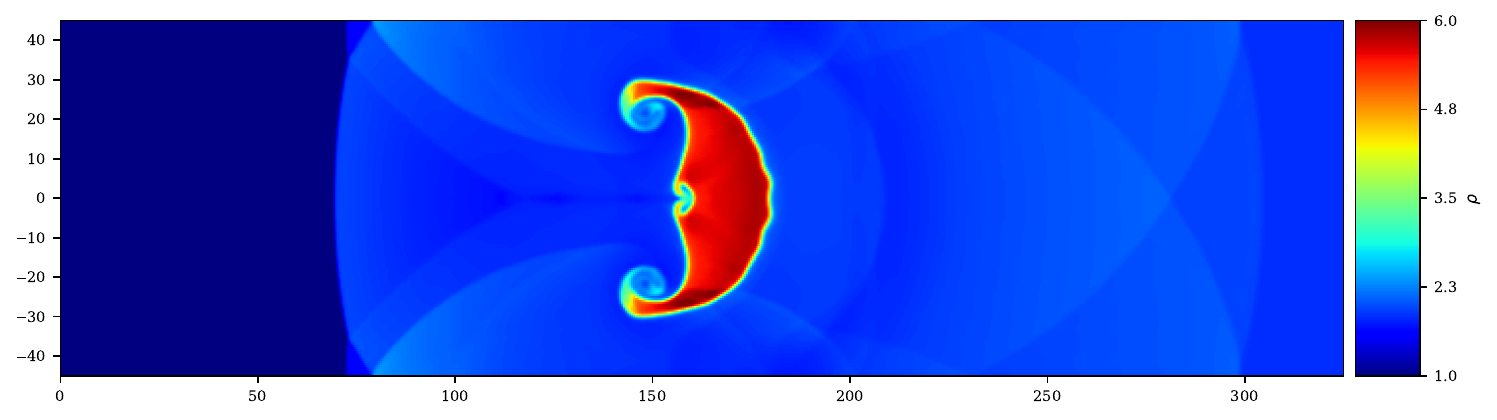}
			\caption{\rc}
		\end{subfigure}
		\hfill
		\begin{subfigure}{0.48\textwidth}
			\centering
			\includegraphics[width=\textwidth]{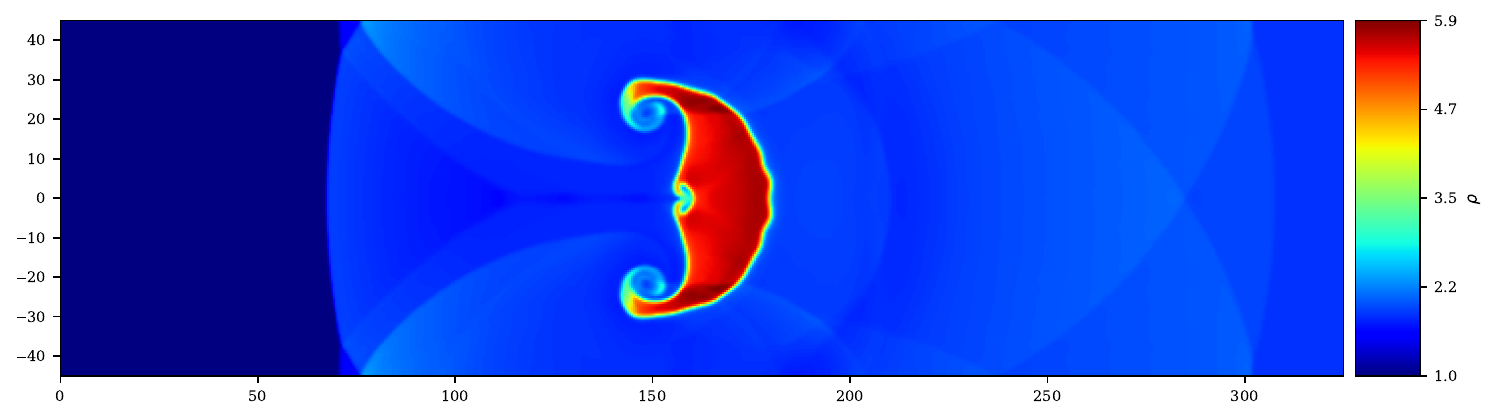}
			\caption{\tm}
		\end{subfigure}
		\caption{Density distribution for Example~\ref{tp:sbi2} at time $t=500$ on a $650\times180$ mesh, obtained using the WENO-AOI scheme with different equations of state.}
		\label{fig:sbi2}
	\end{figure}
	\begin{testproblem}{Two-dimensional Riemann Problem 5}
		\label{tp:rp2d5}	
		In this test case, we again consider the computational domain $[0,1]\times[0,1]$ with outflow boundary conditions imposed on all sides. The initial conditions are given by
		
		\begin{equation*}
			\left(\rho,\,u_x,\,u_y,\,p\right)=
			\begin{cases}
				\left(0.1,\,0,\,0,\,20.0\right), & \text{if } x\geq 0.5 \text{ and } y\geq 0.5,\\
				\left(0.00414329639576,\,0.9946418833556542,\,0,\,0.05\right), & \text{if } x<0.5 \text{ and } y\geq 0.5,\\
				\left(0.01,\,0,\,0,\,0.05\right), & \text{if } x<0.5 \text{ and } y<0.5,\\
				\left(0.00414329639576,\,0,\,0.9946418833556542,\,0.05\right), & \text{if } x\geq 0.5 \text{ and } y<0.5.
			\end{cases}
		\end{equation*}
		
		The simulations are performed on a uniform $400\times400$ mesh with outflow boundary conditions. Numerical results obtained using the \wenoaoi scheme at time $t=0.4$ are presented in Figure~\ref{fig:rp2d5}.
		
		The strong initial discontinuities give rise to a complex wave pattern, with a pronounced low-density region developing near the center of the domain. The solutions obtained with the \ip, \rc, and \tm exhibit very similar flow structures, whereas the \id solution shows noticeable differences in the shape and structure of the central low-density region. The \wenoaoi scheme captures the main flow features and resolves the complex wave interactions for all four equations of state.
		
	\end{testproblem}
	\begin{figure}[ht!]
		\centering
		\begin{subfigure}{0.48\textwidth}
			\centering
			\includegraphics[width=\textwidth]{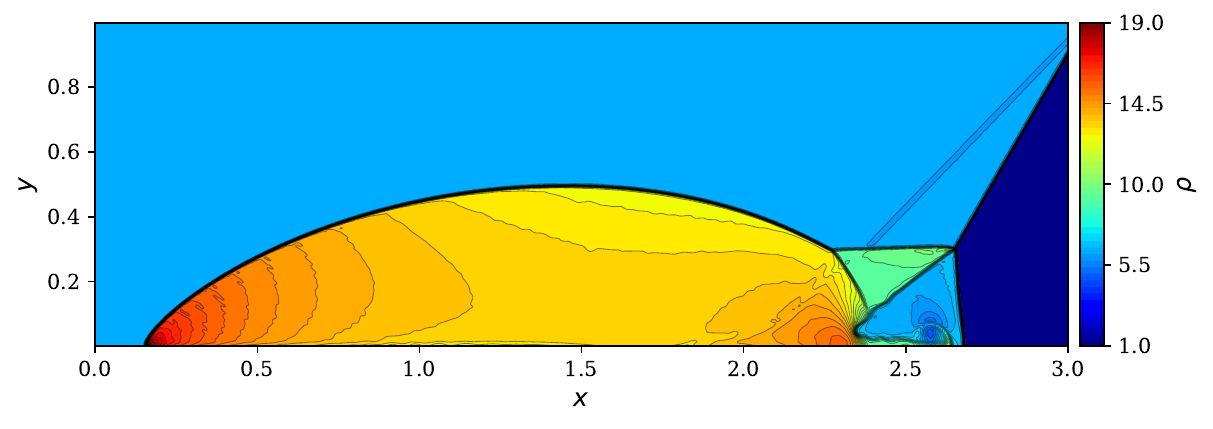}
			\caption{\id}
		\end{subfigure}
		\hfill
		\begin{subfigure}{0.48\textwidth}
			\centering
			\includegraphics[width=\textwidth]{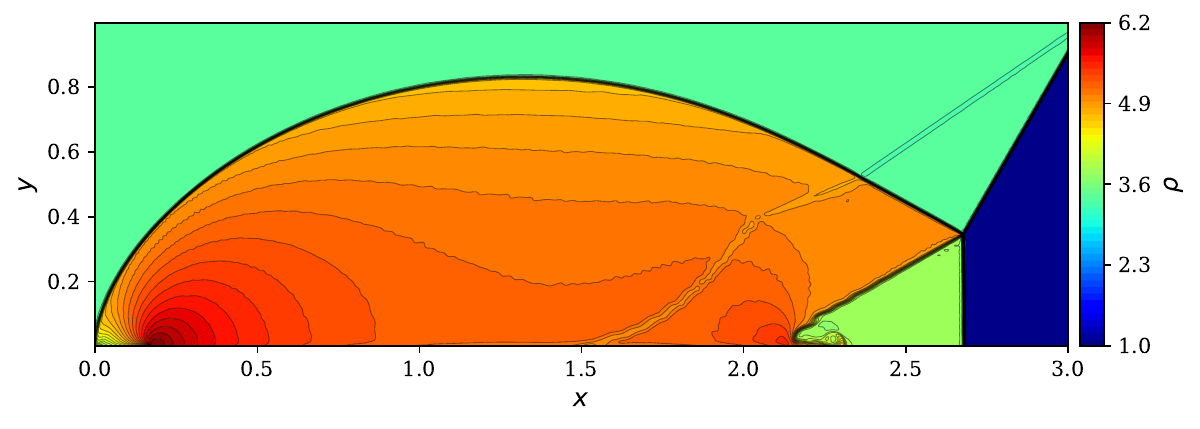}
			\caption{\ip}
		\end{subfigure}
		\\[6pt]
		\begin{subfigure}{0.48\textwidth}
			\centering
			\includegraphics[width=\textwidth]{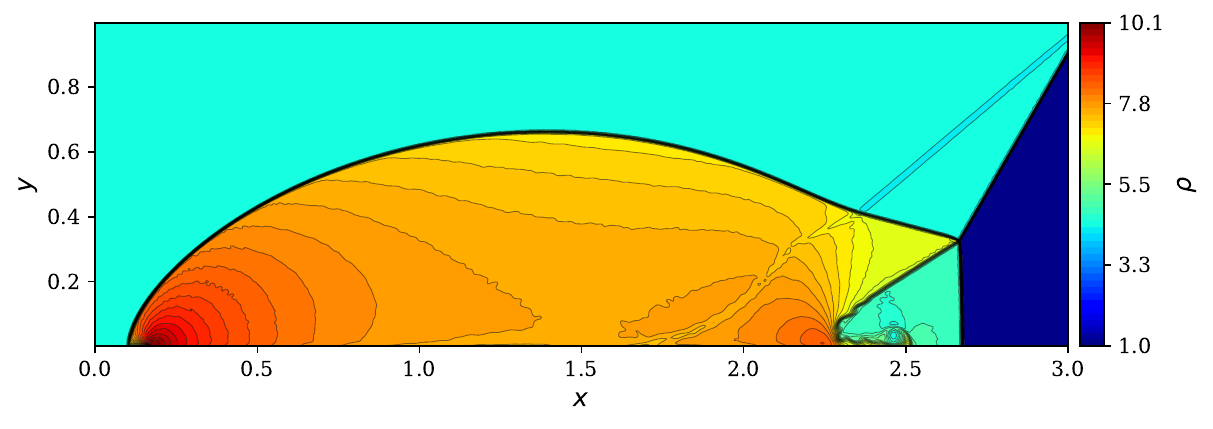}
			\caption{\rc}
		\end{subfigure}
		\hfill
		\begin{subfigure}{0.48\textwidth}
			\centering
			\includegraphics[width=\textwidth]{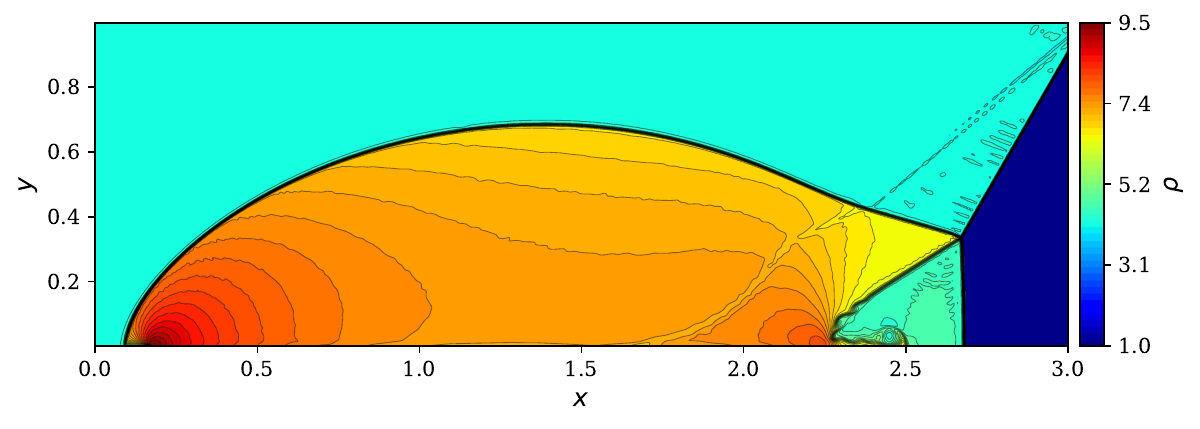}
			\caption{\tm}
		\end{subfigure}
		\caption{Density contours for the double Mach reflection problem at time $t=4.0$ obtained using the \wenoaoi scheme on a $960 \times 240$ mesh.}
		\label{fig:dmr}
	\end{figure}
	\begin{testproblem}{Shock-Bubble Interaction Problem I}
		\label{tp:sbi1}
		
		In this test case, we examine the interaction between a left-propagating shock wave and a gaseous bubble. The full problem description can be found in \cite{he2012adaptive}. The computational domain is taken as $[0,325]\times[-45,45]$, with reflective boundary conditions imposed at $y=\pm45$. At the left boundary $(x=0)$ and the right boundary $(x=325)$, we prescribe the corresponding constant post-shock and pre-shock states, respectively. The initial shock is defined by
		\begin{equation*}
			\left(\rho,\,u_x,\,u_y,\,p\right)=
			\begin{cases}
				(1,\,0,\,0,\,0.05), & \text{if } x<265,\\
				(1.865225080631180,\,-0.196781107378299,\,0,\,0.15), & \text{if } x>265.
			\end{cases}
		\end{equation*}
		A bubble of radius $25$, centered at $(215,0)$, is placed with the initial state
		\begin{equation*}
			\left(\rho,\,u_x,\,u_y,\,p\right)
			=(0.1358,\,0,\,0,\,0.05).
		\end{equation*}
		The simulations are performed on a $650\times180$ mesh. In this test, we investigate the influence of four different equations of state, namely ID-EOS, IP-EOS, RC-EOS, and TM-EOS, using the WENO-AOI scheme. Numerical results at time $t=450$ are presented in Figure~\ref{fig:pbsb1}. The interaction between the shock wave and the bubble produces complex flow structures, including transmitted and reflected waves, significant bubble deformation, and the formation of vortical structures. While the choice of EOS leads to noticeable differences in the surrounding wave patterns and density distributions, the overall bubble evolution and the dominant vortical structures remain qualitatively consistent across the four EOS. The WENO-AOI scheme accurately resolves these features for all considered EOS, demonstrating its robustness in handling shock--bubble interactions with different thermodynamic models.
	\end{testproblem}
	
	
	\begin{testproblem}{Shock-Bubble Interaction Problem II}
		\label{tp:sbi2}
		
		In this test case, we consider another variant of the shock--bubble interaction problem from \cite{he2012adaptive}. The computational setup is identical to that of Example \ref{tp:sbi1}, except that the bubble now contains a heavier fluid. The initial state inside the bubble is specified by
		\begin{equation*}
			\left(\rho,\,u_x,\,u_y,\,p\right)
			=(3.1538,\,0,\,0,\,0.05).
		\end{equation*}
		The simulations are performed on a $650\times180$ mesh. In this test, we again investigate the influence of the four equations of state using the WENO-AOI scheme. Numerical results at time $t=500$ are presented in Figure~\ref{fig:sbi2}. The presence of the heavier fluid substantially alters the shock--bubble dynamics, leading to a strongly deformed bubble with pronounced roll-up of the upper and lower portions of the interface. The density contours also reveal differences in the transmitted wave structure and the surrounding flow field for the different EOS. Nevertheless, the main features of the interaction, including the deformed bubble interface and the associated vortical structures, are consistently resolved by the WENO-AOI scheme for all four EOS.
	\end{testproblem}
	
	\begin{testproblem}{Double Mach Reflection Problem}
		\label{tp:dmr}
		This problem represents a standard, highly challenging test for shock-capturing schemes. Originally introduced by Woodward and Colella \cite{woodward1984numerical} for classical hydrodynamics, it was extended to RHD with the \id in \cite{wu2015high}, and later it was used in several articles including \cite{basak2025constraints,cao2025robust,he2012adaptive,zhao2013runge}. It features a strong shock propagating in a two-dimensional channel that hits a wedge, initiating a complex double Mach reflection process with fine-scale structures. 
		
		The computational domain is defined as $[0, 4] \times [0, 1]$. Initially, a planar shock front is set at an angle of $60^\circ$ to the $x$-axis, starting from $x_s(y) = \frac{1}{6} + \frac{y}{\sqrt{3}}$ at $t=0$. The shock propagates to the right with a speed $V_s = 0.5$. The pre-shock (right state) is set globally for all EOS as
		\begin{align*}
			\left(\rho_R, u_{x,R}, u_{y,R}, p_R\right) = \left(1.0, 0.0, 0.0, 0.001\right).
		\end{align*}
		The post-shock (left state) values are computed using the shock speed $V_s = 0.5$ and the Rankine-Hugoniot (RH) jump conditions. In this work, these post-shock states are tabulated in Table \ref{tab:dmr_postshock}, extending the numerical test to \tm, \ip, and \rc under these initial conditions.
		
		For the boundary conditions, the left boundary ($x=0$) is an inflow boundary assigned to the post-shock state, while the right boundary ($x=4$) is an outflow boundary. On the bottom boundary ($y=0$), a reflective wall condition is applied for $x \geq 1/6$, and an inflow boundary condition assigned to the post-shock state is applied for $x < 1/6$. The top boundary ($y=1$) is configured to track the exact motion of the shock front by assigning the post-shock state for $x < x_s(1, t)$ and the pre-shock state for $x \ge x_s(1, t)$, where the shock front position is $x_s(y, t) = \frac{1}{6} + \frac{y + 2 V_s t}{\sqrt{3}}$. The computations are run up to the final time $t = 4.0$ and plotted in Figure \ref{fig:dmr}.
		\begin{table}[ht]
			\centering
			\footnotesize
			\setlength{\tabcolsep}{4pt}
			\caption{Post-shock states (left states) for the DMR problem under different EOS.}
			\label{tab:dmr_postshock}
			\begin{tabular}{lcccc}
				\toprule
				Equation of State & $\rho_L$ & $p_L$ & $u_{x,L}$ & $u_{y,L}$ \\
				\midrule
				\id ($\Gamma=1.4$) & $6.230163124916983$ & $0.2737681594569726$ & $0.3701795013577267$ & $-0.2137232347573649$ \\
				\tm                & $4.283403143384633$ & $0.2458911152705976$ & $0.3400404782693554$ & $-0.1963224616641815$ \\
				\ip                & $3.374066413843201$ & $0.2223337777773334$ & $0.3133737731750582$ & $-0.1809264322995886$ \\
				\rc                & $4.395707430079081$ & $0.2481330900558431$ & $0.3425375832788305$ & $-0.1977641659135966$ \\
				\bottomrule
			\end{tabular}
		\end{table}
		\par The different equations of state lead to appreciable variations in the post-shock state and consequently in the density distribution within the reflected flow. As seen in Figure~\ref{fig:dmr}, all four EOS produce the characteristic double Mach reflection pattern, including the strong incident shock, reflected shock, and the complex flow structure near the wedge. However, the magnitude and distribution of density within the shocked region differ significantly among the EOS. In particular, the ID-EOS yields substantially higher density values, whereas the IP-EOS produces a comparatively lower-density field, with the TM-EOS and RC-EOS giving intermediate distributions. These results illustrate the sensitivity of the strong shock structure to the choice of EOS while demonstrating that the WENO-AOI scheme remains effective across all four thermodynamic models.
	\end{testproblem}
	\begin{figure}[ht!]
		\centering
		\begin{subfigure}{0.24\textwidth}
			\centering
			\includegraphics[width=\textwidth]{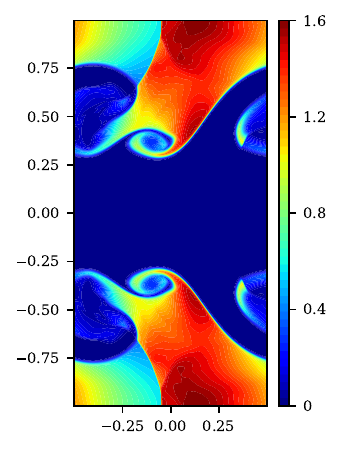}
			\caption{\id}
		\end{subfigure}
		\hfill
		\begin{subfigure}{0.24\textwidth}
			\centering
			\includegraphics[width=\textwidth]{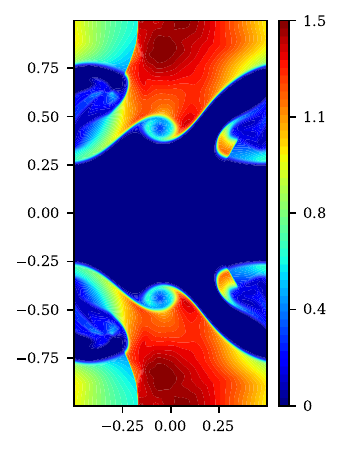}
			\caption{\ip}
		\end{subfigure}
		\hfill
		\begin{subfigure}{0.24\textwidth}
			\centering
			\includegraphics[width=\textwidth]{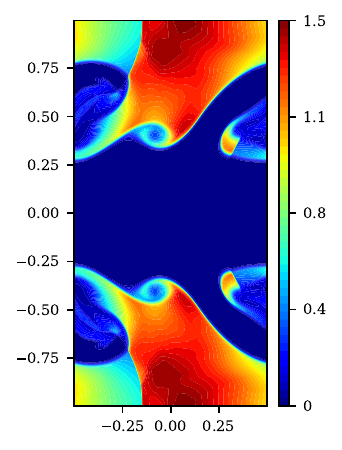}
			\caption{\rc}
		\end{subfigure}
		\hfill
		\begin{subfigure}{0.24\textwidth}
			\centering
			\includegraphics[width=\textwidth]{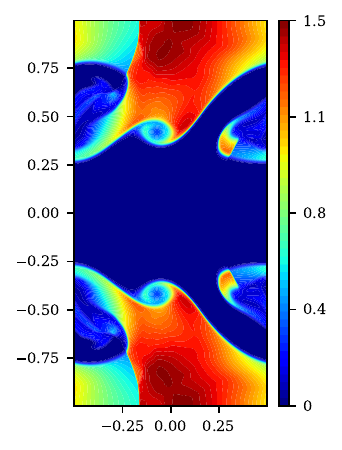}
			\caption{\tm}
		\end{subfigure}
		\caption{Rest-mass density distribution for the relativistic Kelvin--Helmholtz instability at time $t = 3.0$ obtained using the \wenoaoi scheme on a $640 \times 320$ mesh for different equations of state.}
		\label{fig:kh}
	\end{figure} 
	\begin{testproblem}{Relativistic Kelvin--Helmholtz Instability}
		\label{tp:kh}
		The Kelvin--Helmholtz (KH) instability is a classical instability of fluid dynamics,
		invoked in the astrophysical context to explain the observed phenomenology of extended
		radio-jets. Originally studied in the relativistic regime by
		\cite{mignone2009five,beckwith2011second,radice2012thc}, it was used as a benchmark
		for high-order AMR schemes in \cite{zanotti2015high}. It features two shear layers
		that roll up into vortices, transitioning from a linear to a nonlinear regime, and
		serves as a sensitive test for the ability of a scheme to capture contact
		discontinuities and fine-scale structures.
		
		The computational domain is $\Omega = [-0.5, 0.5] \times [-1.0, 1.0]$ with periodic
		boundary conditions in both directions and adiabatic index $\gamma = 4/3$. The
		pre-shock (right) state is not applicable here; instead, the initial conditions
		consist of two shear layers at $y = \pm 0.5$ with a transverse velocity perturbation
		to trigger the instability. Specifically, the initial primitive variables are
		\begin{align*}
			u_x &= \begin{cases}
				v_s \tanh\!\left(\dfrac{y - 0.5}{a}\right), & y > 0, \\[6pt]
				-v_s \tanh\!\left(\dfrac{y + 0.5}{a}\right), & y \leq 0,
			\end{cases} \\[8pt]
			u_y &= \begin{cases}
				\eta_0\, v_s \sin(2\pi x)\exp\!\left(-\dfrac{(y-0.5)^2}{\sigma}\right), & y > 0, \\[6pt]
				-\eta_0\, v_s \sin(2\pi x)\exp\!\left(-\dfrac{(y+0.5)^2}{\sigma}\right), & y \leq 0,
			\end{cases} \\[8pt]
			\rho &= \begin{cases}
				\rho_0 + \rho_1 \tanh\!\left(\dfrac{y - 0.5}{a}\right), & y > 0, \\[6pt]
				\rho_0 - \rho_1 \tanh\!\left(\dfrac{y + 0.5}{a}\right), & y \leq 0,
			\end{cases}
		\end{align*}
		with uniform pressure $p = 1$ everywhere. The parameters are $v_s = 0.5$,
		$a = 0.01$, $\sigma = 0.1$, $\eta_0 = 0.1$, $\rho_0 = 0.505$, and $\rho_1 = 0.495$.
		
		As the system evolves, the shear layers roll up into vortices and the flow transitions
		from the linear to the nonlinear regime for $t \in [2, 3]$. The computations are run
		up to the final time $t = 3.0$ and the rest-mass density is plotted in Figure \ref{fig:kh}. The results obtained using the WENO-AOI scheme show the formation of one primary vortex at each shear layer and the absence of prominent secondary vortices, with similar structures across the four EOS, consistent with the findings of \cite{zanotti2015high} that secondary vortices produced by less dissipative schemes are numerical artifacts rather than physical structures.
	\end{testproblem}
	
	\begin{figure}[ht!]
		\centering
		\begin{subfigure}{0.32\textwidth}
			\centering
			\includegraphics[width=\textwidth]{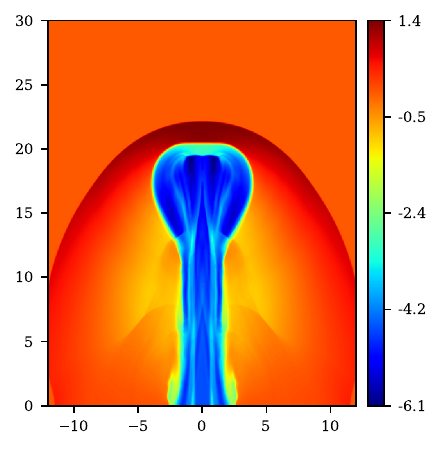}
			\caption{$v_b=0.99$ with $M_b=1.72$}
		\end{subfigure}
		\begin{subfigure}{0.32\textwidth}
			\centering
			\includegraphics[width=\textwidth]{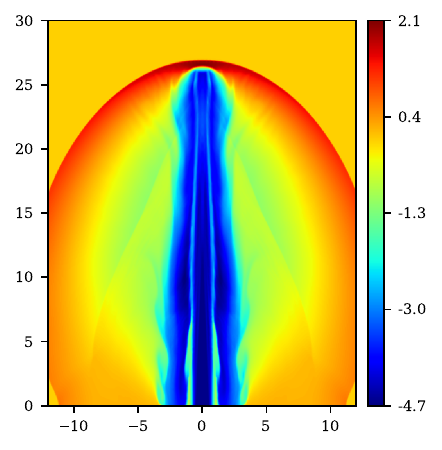}
			\caption{$v_b=0.999$ with $M_b=1.74$}
		\end{subfigure}
		\begin{subfigure}{0.32\textwidth}
			\centering
			\includegraphics[width=\textwidth]{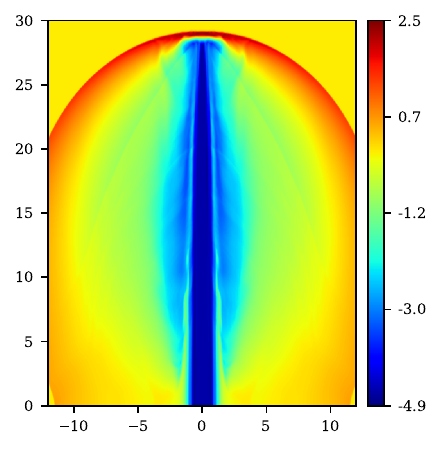}
			\caption{$v_b=0.9999$ with $M_b=1.74$}
		\end{subfigure}
		\caption{Rest-mass density for the pressure-matched hot jet model of Test Problem~\ref{tp:rjet} at time $t=30.0$, obtained using the \wenoaoi scheme on a $240\times600$ mesh.}
		\label{fig:hot_jet}
	\end{figure}
	
	\begin{figure}[ht!]
		\centering
		\begin{subfigure}{0.32\textwidth}
			\centering
			\includegraphics[width=\textwidth]{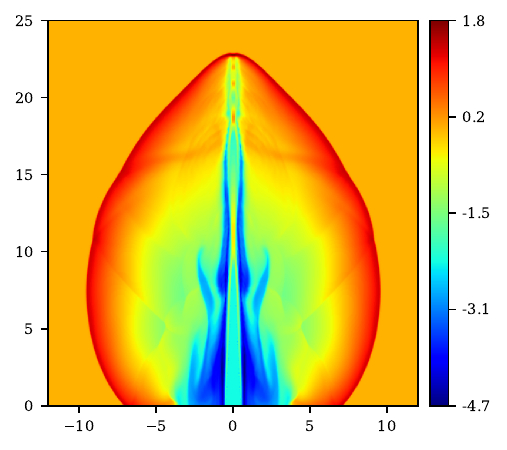}
			\caption{$v_b=0.99$ with $M_b=50$}
		\end{subfigure}
		\begin{subfigure}{0.32\textwidth}
			\centering
			\includegraphics[width=\textwidth]{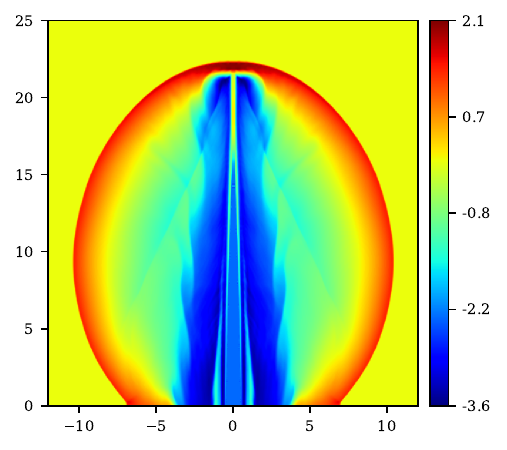}
			\caption{$v_b=0.999$ with $M_b=50$}
		\end{subfigure}
		\begin{subfigure}{0.32\textwidth}
			\centering
			\includegraphics[width=\textwidth]{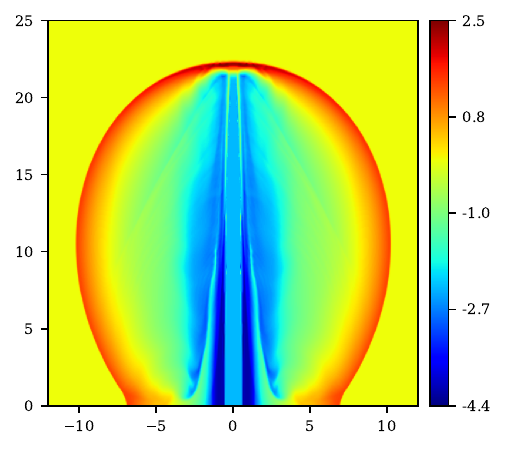}
			\caption{$v_b=0.9999$ with $M_b=50$}
		\end{subfigure}
		\caption{Rest-mass density for the pressure-matched highly supersonic (cold) jet model of Test Problem~\ref{tp:rjet} at times $t=30.0$, $25.0$, and $23.0$, respectively, obtained using the \wenoaoi scheme on a $240\times500$ mesh.}
		\label{fig:cold_jet}
	\end{figure}
	
	
	\begin{testproblem}{Relativistic Jet Problem \label{tp:rjet}}
		
		This problem models high-speed relativistic jet flows and serves as a challenging benchmark for multidimensional relativistic hydrodynamic solvers. Such jets are ubiquitous in extragalactic radio sources associated with active galactic nuclei, where strong relativistic shock waves, shear waves, interface instabilities, and ultra-relativistic flow regions naturally arise. The present setup follows the classical relativistic jet benchmark introduced by \cite{marti1997morphology,aloy1999genesis}, with the parameter choices inspired by \cite{wu2017physical}.
		
		The ambient medium is initially at rest with unit rest-mass density, and a relativistic jet is continuously injected in the positive $y$-direction through the inlet region $|x|\le0.5$ on the bottom boundary ($y=0$). A reflective boundary condition is imposed along $x=0$, a fixed inflow condition is prescribed on the nozzle, and outflow boundary conditions are applied on all remaining boundaries.
		
		The first test considers the pressure-matched hot jet model. The computational domain is $[0,12]\times[0,30]$. The injected jet has a rest-mass density $\rho_b=0.01$, a pressure equal to the ambient pressure, and a beam velocity $v_b$. The RC EOS is employed. Three configurations are considered: (i) $v_b=0.99$ with $M_b=1.72$, (ii) $v_b=0.999$ with $M_b=1.74$, and (iii) $v_b=0.9999$ with $M_b=1.74$.
		
		The second test considers the pressure-matched highly supersonic (cold) jet model. The setup is identical to the hot jet model except that the jet density is increased to $\rho_b=0.1$, the computational domain is changed to $[0,12]\times[0,25]$, and the beam Mach number is taken as $M_b=50$ for all three configurations. The three cases considered are: (i) $v_b=0.99$, (ii) $v_b=0.999$, and (iii) $v_b=0.9999$.

		The relativistic cold jet with configuration (iii) represents the most extreme regime in our test suite. Driven by a nearly light-speed inflow velocity ($v_b=0.9999$), it exhibits a massive Lorentz factor of $W_b\approx 70.7$. When coupled with a cold thermal profile ($c_s = 0.02$ for $M_b= 50$) this yields an exceptionally large relativistic Mach number of $M_r \approx 3.53\times 10^3$. Under these conditions, the kinetic energy flux dominates the thermal pressure by several orders of magnitude. The resulting impact with the ambient medium triggers a violent shock deceleration, generating the most severe pressure and density compression jumps in this study, which strictly tests the robustness of the PCP mechanism.

		The resulting rest-mass density distributions are shown in Figures~\ref{fig:hot_jet} and~\ref{fig:cold_jet}. For both jet models, increasing the beam velocity produces a more extended and collimated jet structure. The cold-jet configurations exhibit stronger axial compression and sharper internal structures, with these features becoming more pronounced as $v_b$ approaches the speed of light. The WENO-AOI scheme remains stable and well resolved even for the most extreme relativistic configuration.
		
	\end{testproblem}

	\section{Conclusion}
	\label{sec:conclusions}
	We have developed a high-order accurate, physical-constraint-preserving AFD-WENO scheme for solving the RHD equations with a general EOS. The PCP property is rigorously proven for the proposed scheme, ensuring that the numerical solutions remain within the physically admissible set. The scheme is designed to handle a wide range of EOS. Extensive numerical experiments, including one-dimensional and two-dimensional test problems demonstrate the robustness, accuracy, and effectiveness of the proposed scheme. 
	
	\section*{Acknowledgments}
	Rakesh Kumar is supported by the Prime Minister Early Career Research Grant (PMECRG) of the Anusandhan National Research Foundation (ANRF), India, under Grant No. ANRF/ECRG/2025/004846/PMS. Biswarup Biswas is supported by the State University Research Excellence (SURE) Scheme of the ANRF, India, under File No. SUR/2022/001786. The authors acknowledge the use of the High-Performance Computing (HPC) facility at Mahindra University, Hyderabad, India, for the numerical simulations presented in this work.
	
	\bibliographystyle{plain}
	\bibliography{references}
\end{document}